\numberwithin{figure}{section}
\def\R{{\mathbb R}}
\def\C{{\mathbb C}}
\def\T{{\mathbb T}}
\def\Z{{\mathbb Z}}
\def\s{\vskip 0.25cm\noindent}
\def\build#1_#2^#3{\mathrel{
\mathop{\kern 0pt#1}\limits_{#2}^{#3}}}
\def\td_#1,#2{\mathrel{\mathop{\build\longrightarrow_{#1\rightarrow #2}^{}}}}
\newtheorem{theo}{Theorem}
\newtheorem{corollary}{Corollary}
\newtheorem{proposition}{Proposition}
\newtheorem{Lemma}{Lemma}
\newtheorem{remark}{Remark}
\begin{document}
\title[Spectral inverse problems]{Spectral inverse problems for compact Hankel operators}
\author{Patrick G\'erard}
\address{Universit\'e Paris-Sud XI, Laboratoire de Math\'ematiques
d'Orsay, CNRS, UMR 8628, et Institut Universitaire de France} \email{{\tt Patrick.Gerard@math.u-psud.fr}}

\author[S. Grellier]{Sandrine Grellier}
\address{F\'ed\'eration Denis Poisson, MAPMO-UMR 6628,
D\'epartement de Math\'ematiques, Universit\'e d'Orleans, 45067
Orl\'eans Cedex 2, France} \email{{\tt
Sandrine.Grellier@univ-orleans.fr}}

\subjclass[2010]{47B35, 37K15}

\date{January 24, 2012}

\keywords{} 
\begin{abstract}
Given two arbitrary sequences $(\lambda_j)_{j\ge 1}$ and $(\mu_j)_{j\ge 1}$ of real numbers satisfying
$$|\lambda_1|>|\mu_1|>|\lambda_2|>|\mu_2|>\dots>\vert \lambda _j\vert >\vert \mu _j\vert  \to 0\ ,$$
we prove that there exists a unique sequence $c=(c_n)_{n\in\Z_+}$, real valued,  such that the  Hankel operators $\Gamma_c$ and  $\Gamma_{\tilde c}$  of symbols $c=(c_{n})_{n\ge 0}$ and $\tilde c=(c_{n+1})_{n\ge 0}$  respectively, are selfadjoint compact operators on $\ell^2(\Z _+)$ and have  the sequences $(\lambda_j)_{j\ge 1}$ and $(\mu_j)_{j\ge 1}$ respectively as non zero eigenvalues.  Moreover, we give an explicit formula for $c$ and we describe the kernel of $\Gamma_c$ and of $\Gamma_{\tilde c}$  in terms of the sequences $(\lambda_j)_{j\ge 1}$ and $(\mu_j)_{j\ge 1}$. More generally, given two arbitrary sequences $(\rho _j)_{j\ge 1}$ and $(\sigma _j)_{j\ge 1}$ of positive numbers satisfying
$$\rho _1>\sigma _1>\rho _2>\sigma _2>\dots> \rho  _j> \sigma  _j  \to 0\ ,$$
we describe the set of sequences  $c=(c_n)_{n\in\Z_+}$ of complex numbers such that the 
 Hankel operators $\Gamma_c$ and  $\Gamma_{\tilde c}$ are compact  on $\ell ^2(\Z _+)$ and have sequences $(\rho _j)_{j\ge 1}$ and $(\sigma _j)_{j\ge 1}$ respectively as non zero singular values.
\end{abstract}
\thanks {The second author acknowledges the support
of the  ANR project AHPI
(ANR-07-BLAN-0247-01).}
\maketitle

\section{Introduction}

Let  $c=(c_n)_{n\ge 0}$ be a sequence of complex numbers. The Hankel operator $\Gamma _c$ of symbol $c$ is formally defined on $\ell ^2(\Z _+)$ by
$$\forall x=(x_n)_{n\ge 0}\in\ell^2(\Z_+)\ ,\ \Gamma _c(x)_n=\sum _{p=0}^\infty c_{n+p}x_p\ .$$
These operators frequently appear in operator theory and in harmonic analysis, and we refer to the books by Nikolskii \cite{N} and  Peller \cite{P} for 
an introduction and their basic properties. By a well known theorem of Nehari \cite{Ne},  $\Gamma _c$ is well defined and bounded on $\ell^2(\Z_+)$ if and only if there exists
a function $f\in L^\infty (\T )$ such that $\forall n\ge 0, \hat f(n)=c_n$, or equivalently if the Fourier series $u_c=\sum_{n\ge 0} c_n {\rm e}^{inx }$ belongs to the space $BMO(\T )$ of bounded mean oscillation functions.  Moreover, by a well known result of Hartman  \cite{Ha}, $\Gamma_c$ is compact if and only if there exists a continuous function $f$ on $\T $ such that $\forall n\ge 0, \hat f(n)=c_n$, or equivalently if  $u_c$ belongs to the space $VMO(\T )$ of vanishing mean oscillation functions. 
Assume moreover that the sequence  $c$  is  real valued. Then $\Gamma_c$ is selfadjoint and compact, so it admits a sequence of non zero eigenvalues $(\lambda_j)_{j\ge 1}$, tending to zero. A natural inverse spectral problem is the following: {\sl given any sequence $(\lambda_j)_{j\ge 1}$, tending to zero, does there exist a compact selfadjoint Hankel operator $\Gamma_c$ having this sequence as non zero eigenvalues,
repeated according to their multiplicity?}

A complete  answer to this question can be found in the literature as a consequence of a more general theorem by Megretskii, Peller, Treil \cite{MPT} characterizing selfadjoint operators  which are unitarily equivalent to bounded Hankel operators. Here we state  the part of their result which concerns the compact operators.

\begin{theo}[Megretskii, Peller, Treil \cite{MPT}]
Let $\Gamma$ be a compact, selfadjoint operator on a separable Hilbert space. Then $\Gamma$ is unitarily equivalent to a Hankel operator if and only if the following conditions are satisfied
\begin{enumerate}
\item Either $\ker (\Gamma)=\{0\}$ or ${\rm dim}\ker (\Gamma)=\infty$; 
\item For any $\lambda \in \R^*$, $|{\rm dim}\ker (\Gamma-\lambda I)-{\rm dim}\ker (\Gamma+\lambda I)|\le 1.$
\end{enumerate}
\end{theo}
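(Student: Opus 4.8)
The plan is to prove the two implications separately, after the standard reduction. By Hartman's theorem a compact selfadjoint Hankel operator is unitarily equivalent to some $\Gamma_c$ with $c$ real and $u_c\in VMO(\T)$; I identify $\ell^2(\Z_+)$ with the Hardy space $H^2$ and let $S$ be the shift $h\mapsto\mathrm e^{ix}h$. Everything rests on two elementary facts: the covariance $\Gamma_{\tilde c}=S^*\Gamma_c=\Gamma_c S$, and, since $S^*S=I$ while $SS^*=I-(\,\cdot\mid 1)1$, the rank-one identity $\Gamma_c^2-\Gamma_{\tilde c}^2=(\,\cdot\mid c)\,c$ with $c=\Gamma_c 1$. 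Iterating $S^*$ produces a tower $\Gamma^{(k)}=(S^*)^k\Gamma_c$ of Hankel operators with symbols $(c_{n+k})_{n\ge0}$, $\Gamma^{(k+1)}=S^*\Gamma^{(k)}$, with $(\Gamma^{(k)})^2-(\Gamma^{(k+1)})^2$ rank one and $\|\Gamma^{(k)}\|\to0$ (because $\Gamma_c$ is compact and $(S^*)^k\to0$ strongly, so $(S^*)^k\Gamma_c\to0$ in norm). For the necessity of (1), from $\Gamma_c S=S^*\Gamma_c$ one reads off that $\ker\Gamma_c$ is a closed $S$-invariant subspace of $H^2$, hence by Beurling's theorem it is $\{0\}$ or $\theta H^2$ for an inner function $\theta$ — in either case $\{0\}$ or infinite-dimensional.

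The necessity of (2) is the substantive point. Fix $\lambda>0$ and set $V=\ker(\Gamma_c^2-\lambda^2 I)=V_+\oplus V_-$ with $V_\pm=\ker(\Gamma_c\mp\lambda I)$, both finite-dimensional. The target is that $V$ splits as a one-dimensional ``free'' summand together with a part on which the two signs of $\Gamma_c$ occur with equal multiplicity, so that $|\dim V_+-\dim V_-|\le1$. To see this I would analyze $\Gamma_c$, $\Gamma_{\tilde c}=\Gamma_c S$ and the vector $c=\Gamma_c 1$ together on $V$: compressing $\Gamma_{\tilde c}$ to the $\Gamma_c$-invariant space $V$ gives $P_V\Gamma_{\tilde c}|_V=\Gamma_c|_V\circ T$ with $T:=P_VS|_V$, and, this compression being selfadjoint and $\Gamma_c|_V=\lambda(P_+-P_-)$, the contraction $T$ satisfies $(P_+-P_-)T(P_+-P_-)=T^*$; writing $T$ in the $V_+\oplus V_-$ blocks, its off-diagonal block $b\colon V_-\to V_+$ then obeys $\dim V_+-\dim V_-=\dim(\operatorname{coker}b)-\dim(\ker b)$. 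The two defect relations $I_V-T^*T=E^*E$ (from $S^*S=I$) and $I_V-TT^*=(\,\cdot\mid P_V1)P_V1+F^*F$ (from $SS^*=I-(\,\cdot\mid1)1$), fed the global information of the tower $\Gamma^{(k)}$ with $\|\Gamma^{(k)}\|\to0$, pin $b$ down up to a one-dimensional defect located by the component of $c$ in $V$; this yields $|\dim V_+-\dim V_-|\le1$ and, carried out for every eigenvalue, also describes $\ker\Gamma_c$.

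For sufficiency, given a compact selfadjoint $\Gamma$ obeying (1)--(2), I would reconstruct $c$ by the explicit inverse-spectral procedure of the present paper. List the distinct nonzero eigenvalues of $\Gamma$ by decreasing modulus; by (2) the signed excess at each modulus lies in $\{-1,0,1\}$, so the spectrum decomposes into a part carried by a strictly interlacing family $|\lambda_1|>|\mu_1|>|\lambda_2|>\cdots\to0$ — to which the constructive theorem applies, producing a real $c$ with $\Gamma_c$ and $\Gamma_{\tilde c}$ having the prescribed simple eigenvalues, and $u_c\in VMO$ since $\lambda_j\to0$ — together with a ``diagonal'' part carrying the remaining higher multiplicities, which is absorbed by the general form of the reconstruction. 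Finally condition (1) matches $\ker\Gamma$ (trivial or infinite-dimensional) with $\ker\Gamma_c$ (which the construction exhibits as $\{0\}$ or $\theta H^2$), and $\Gamma_c$ is checked to be unitarily equivalent to $\Gamma$ by comparing the full spectral data on a cyclic subspace.

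The main obstacle is the necessity of (2): the rank-one identity alone controls only $\dim\ker(\Gamma_c^2-\lambda^2)=\dim V_++\dim V_-$, and the bare compression to $V$ does not by itself exclude $|\dim V_+-\dim V_-|=2$; closing this gap requires genuinely coupling the two defect identities along the whole Hankel tower and exploiting $\|\Gamma^{(k)}\|\to0$. On the sufficiency side the corresponding difficulty is handling eigenvalues of multiplicity $\ge2$, where the strict interlacing underlying the paper's main theorem fails and the generic construction must be glued to a degenerate piece without spoiling the Hankel structure.
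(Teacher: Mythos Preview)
The paper does not prove this statement: it is quoted from \cite{MPT} as a known result, serving only as background and motivation for the paper's own inverse spectral problem. There is therefore no proof in the paper to compare your attempt against.

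On your sketch itself: the necessity of (1) via Beurling is correct and standard. For the necessity of (2), the ingredients you assemble are reasonable --- the compression of $S$ to the eigenspace $V$, the symmetry $T=JT^*J$ with $J=P_+-P_-$, and the index identity $\dim V_+-\dim V_-=\dim\operatorname{coker}b-\dim\ker b$ --- but this last identity is a tautology for any linear map $b\colon V_-\to V_+$ and carries no information by itself. As you acknowledge, nothing you have written bounds the defects of $b$, and ``coupling the two defect identities along the whole Hankel tower'' is a hope rather than an argument. The actual \cite{MPT} proof proceeds quite differently.

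For sufficiency, invoking Theorem~\ref{TheoHomeo} is natural but only delivers the generic case of simple, strictly interlacing singular values; the full statement allows arbitrary finite multiplicities subject to (2), and the paper's construction is explicitly restricted to $VMO_{+,\mathrm{gen}}$. Your proposed ``gluing to a degenerate piece without spoiling the Hankel structure'' is exactly the hard part --- a direct sum of Hankel operators is not Hankel --- and you give no mechanism for it. So both directions remain genuinely incomplete, and the gaps you flag yourself are real and substantial.
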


As a consequence of this theorem, any sequence of real numbers  with distinct absolute values and converging to $0$ is the sequence of the non zero eigenvalues of some compact selfadjoint Hankel operator.

In this paper, we are interested in finding additional constraints  on the operator $\Gamma_c$ which give rise to uniqueness of $c$. With this aim in view, we introduce  the shifted Hankel operator $\Gamma_{\tilde c}$, where  $\tilde c_n:=c_{n+1}$ for all $n\in \Z_+$.  If we denote by $(\lambda_j)_{j\ge 1}$ the sequence of non zero eigenvalues of $\Gamma_c$ and by $(\mu_j)_{j\ge 1}$ the sequence of non zero eigenvalues of $\Gamma_{\tilde c}$, one can check --see below-- that 
$$|\lambda_1|\ge |\mu _1|\ge |\lambda_2|\ge|\mu _2|\ge \dots\ge \dots \to 0\ .$$

Our result reads as follows.

\begin{theo}\label{HankelautoE}
 Let $( \lambda _j)_{j\ge 1}$ , $( \mu _j)_{j\ge 1}$ be two sequences of real numbers tending to zero so that $$\vert \lambda _1\vert >\vert \mu_1\vert >\vert \lambda  _2\vert >\vert \mu_2\vert >... >\dots \to 0\ .$$ 
There exists a unique real valued sequence $c=(c_n)$   such that $\Gamma _c$ and $\Gamma _{\tilde c}$ are compact selfadjoint operators, 
  the sequence of non zero eigenvalues of
$\Gamma _c$
is   $(\lambda _j)_{  j\ge 1},$ 
and the sequence of non zero eigenvalues of 
$ \Gamma _{\tilde c}$
is   $ (\mu_j)_{  j\ge 1}.$

Furthermore,  the kernel of $\Gamma_c$ is reduced to zero if and only if the following conditions hold,
\begin{equation}
\sum_{j=1}^\infty \left( 1-\frac{\mu_j^2}{\lambda_j^2}\right)=\infty,\; \qquad\;\sup_N\frac 1{\lambda_{N+1}^2}\prod_{j=1}^N\frac{\mu_j^2}{\lambda_j^2}=\infty.\end{equation}
Moreover, in that case, the kernel of $\Gamma_{\tilde c}$ is also reduced to $0$.

\end{theo}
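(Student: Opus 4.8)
Throughout write $H:=\Gamma_c$ and $K:=\Gamma_{\tilde c}$, let $(e_n)_{n\ge 0}$ be the canonical basis of $\ell^2(\Z_+)$, $S$ the shift $(Sx)_n=x_{n-1}$, and identify the symbol with the vector $u:=He_0=c\in\ell^2(\Z_+)$. Comparing matrix entries yields the structural relations on which everything rests: $K=HS=S^*H$ (so $H$ intertwines $S$ and $S^*$: $(S^*)^nH=HS^n$), the rank-one identity
\[
H^2=K^2+(\,\cdot\,\vert\,u)\,u\,,
\]
together with $S^*S=I$, $SS^*=I-(\,\cdot\,\vert\,e_0)e_0$, and the fact that $c_n=\langle S^ne_0,u\rangle$, so that $u_c$ is determined by the triple $(S,e_0,u)$. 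The plan is to turn these relations into an explicit reconstruction of $u$, hence of $c$, from the numbers $(\lambda_j)$ and $(\mu_j)$ — which simultaneously forces uniqueness — and then to read off the kernel.

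\textbf{Step 1: the moduli of the spectral components.} Since the $\vert\lambda_j\vert$ are pairwise distinct, $H^2$ is a nonnegative compact operator with simple nonzero eigenvalues $\lambda_j^2$; let $(e_j)_{j\ge1}$ be the corresponding orthonormal eigenvectors, spanning $(\ker H)^\perp$, and $u=\sum_ju_je_j$. By the rank-one identity the nonzero eigenvalues $\mu^2$ of $K^2$ are exactly the roots of the secular equation $\sum_j\vert u_j\vert^2/(\lambda_j^2-\mu^2)=1$; the interlacing hypothesis forces the meromorphic function $1-\sum_j\vert u_j\vert^2/(\lambda_j^2-z)$ to equal the product $\prod_k(\mu_k^2-z)/(\lambda_k^2-z)$ (convergent because $\sum_j(\lambda_j^2-\mu_j^2)<\infty$), and a residue computation gives
\[
\vert u_j\vert^2=(\lambda_j^2-\mu_j^2)\prod_{k\ne j}\frac{\lambda_j^2-\mu_k^2}{\lambda_j^2-\lambda_k^2},\qquad \sum_j\vert u_j\vert^2=\sum_j(\lambda_j^2-\mu_j^2)\le\lambda_1^2 ,
\]
the last bound following from $\lambda_{j+1}^2<\mu_j^2<\lambda_j^2$ and being exactly what guarantees $u\in\ell^2$. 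Running the same argument for $H^2=K^2+(\,\cdot\,\vert\,u)u$ in the eigenbasis $(f_j)$ of $K^2$ fixes the moduli $\vert\langle u,f_j\rangle\vert$ as well. Thus the interlacing data dictate all moduli of the spectral components of the cyclic vector.

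\textbf{Step 2: reconstruction and uniqueness.} It remains to recover the phases of the $u_j$ and the shift itself. I would build the model from the data: an orthonormal system $(e_j)_{j\ge1}$, the diagonal operator $H_0:=\mathrm{diag}(\lambda_j)$, a vector $u_0:=\sum_j\varepsilon_j\vert u_j\vert e_j$ with signs $\varepsilon_j$ to be found, and then reconstruct everything else from the remaining relations: $K_0:=H_0^2-(\,\cdot\,\vert\,u_0)u_0$ has eigenvalues $\mu_j^2$ by Step 1, and the requirements that $S_0$ be an isometry with $H_0S_0=S_0^*H_0$ and one-dimensional defect space, together with $S_0S_0^*=I-(\,\cdot\,\vert\,e_{\rm cyc})e_{\rm cyc}$, determine $S_0$ and the vector $e_{\rm cyc}$; then $u_0=H_0e_{\rm cyc}$ and the demand that $c_n:=\langle S_0^ne_{\rm cyc},u_0\rangle$ be a \emph{real} sequence (equivalently, that $\Gamma_c$ and $\Gamma_{\tilde c}$ be self-adjoint with the prescribed eigenvalues, \emph{signs included}) pin down the $\varepsilon_j$. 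One then checks that $S_0$ is unitarily equivalent to the unilateral shift, so that in this identification $H_0=\Gamma_c$ is a genuine Hankel operator with explicit symbol $c_n=\langle S_0^ne_{\rm cyc},u_0\rangle$; that it is compact with nonzero eigenvalues exactly $(\lambda_j)$; and — running the secular analysis of Step 1 backwards for $\Gamma_{\tilde c}=S_0^*H_0$, and controlling its essential spectrum by finite-rank truncation of the data — that $\Gamma_{\tilde c}$ is compact with nonzero eigenvalues exactly $(\mu_j)$. Uniqueness is then automatic: any admissible $c$ must reproduce this model step by step.

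\textbf{Step 3: the kernel, and the main obstacle.} Since $\Gamma_{\tilde c}=S^*\Gamma_c$, the subspace $\overline{\mathrm{ran}}\,\Gamma_c$ is invariant under $S^*$, so by Beurling's theorem it is either all of $H^2$ or a proper model space $K_\theta=H^2\ominus\theta H^2$ for some nonconstant inner $\theta$; accordingly $\ker\Gamma_c=\{0\}$ or $\ker\Gamma_c=\theta H^2$ (infinite-dimensional), recovering the dichotomy of Theorem [MPT]. The reconstruction of Step 2 produces $\theta$ explicitly, as a limit of finite Blaschke-type inner functions manufactured from the $\lambda_j,\mu_j$, and I would compute that $\theta$ is a unimodular constant — i.e. $\ker\Gamma_c=\{0\}$ — precisely when (a) $\sum_j(1-\mu_j^2/\lambda_j^2)=\infty$, equivalently $\prod_j\mu_j^2/\lambda_j^2=0$, which removes the Blaschke part and forces $e_{\rm cyc}\in\overline{\mathrm{span}}(e_j)$ (no mass of $u$ ``escapes to the point $0$''), and (b) $\sup_N\lambda_{N+1}^{-2}\prod_{j=1}^N\mu_j^2/\lambda_j^2=\infty$, which rules out the remaining (singular) obstruction by showing that the vector one would otherwise have to adjoin to the kernel fails to be square-summable; under (a)--(b) the vector $u$ also fails to lie in the closed span of the $K^2$-eigenvectors, whence $\ker\Gamma_{\tilde c}=\{0\}$ as well. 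The routine part is Step 1 together with the uniqueness remark; the technical heart, and the step I expect to be the main obstacle, is Step 2--3 — reconstructing the shift so that $\Gamma_c$ is truly Hankel and $\Gamma_{\tilde c}$ has \emph{exactly} the prescribed spectrum (no spurious eigenvalues absorbed into the essential spectrum), and rigorously translating ``$\theta$ constant'' into the two series conditions (a)--(b).
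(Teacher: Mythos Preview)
Your Step~1 is correct and is essentially the paper's Proposition~\ref{trace} and Lemma~\ref{Nu_j}: the rank-one identity $H^2=K^2+(\cdot\vert u)u$ yields the product formula
\[
((I-xH^2)^{-1}e_0\vert e_0)=\prod_{j}\frac{1-x\mu_j^2}{1-x\lambda_j^2},
\]
from which the residues give $\vert u_j\vert^2$ exactly as you write. Your uniqueness argument is also close in spirit to the paper's: once one has an explicit formula for $c_n$ in terms of the action of the compressed shift on the eigenbasis, injectivity follows.

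The gap is in Step~2, and the paper explicitly flags it. You propose to build $(H_0,S_0,e_{\rm cyc})$ abstractly from the data and then ``check that $S_0$ is unitarily equivalent to the unilateral shift''. Two issues make this hard. First, on the span of the $e_j$ the shift is only the \emph{compressed} shift, a contraction with defect governed by the (a priori unknown) kernel; to get a genuine isometry you must simultaneously reconstruct $\ker H$ and the way $S$ acts on it, which is circular. Second, even granting an explicit candidate for $c$, verifying directly that $\Gamma_c$ and $\Gamma_{\tilde c}$ are compact with \emph{exactly} the prescribed eigenvalues --- and no spurious ones --- is delicate; the authors remark after Theorem~\ref{TheoHomeo} that ``from the complicated structure of formula~(\ref{c_nDiffeo}), it seems difficult to check directly that the corresponding Hankel operators have the right sequences of singular values''. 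Their route is different: they first prove the analogue on the finite-rank manifolds $\mathcal V(2N)_{\rm gen}$ (Theorem~\ref{TheoDiffeoMN}), where $\chi_N$ is a genuine diffeomorphism, then take $u_N=\chi_N^{-1}((\zeta_p)_{p\le 2N})$ and pass to the limit. The key analytic input you are missing is the Adamyan--Arov--Krein theorem, which upgrades weak $VMO$ convergence of the $u_N$ to strong convergence (Proposition~\ref{compactness}); this is what guarantees that the limiting $u$ is still in $VMO_{+,\rm gen}$ with the correct spectra, with no eigenvalues lost to the essential spectrum. Without AAK (or an equivalent compensated-compactness device) your approximation cannot be closed.

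For Step~3 your outline is broadly right --- Beurling gives $\ker H=\varphi L^2_+$ and the task is to decide when $\varphi$ is constant --- but the paper's argument is more concrete than a Blaschke-limit computation. They show $\ker H_u=\{0\}$ iff $1\in\overline R\setminus R$ for $R={\rm Ran}\,H_u$; then $1\in\overline R$ reads $\sum\nu_j^2=1$, i.e.\ $\prod\sigma_j^2/\rho_j^2=0$ via the product formula, giving your condition~(a), while $1\notin R$ reads $\sum\nu_j^2/\rho_j^2=\infty$, which they convert to your condition~(b) by the monotone-limit identity $\sum_j\nu_j^2/\rho_j^2=\sup_N\rho_{N+1}^{-2}\prod_{j\le N}\sigma_j^2/\rho_j^2$ (see the $F_N(y)$ computation around~(\ref{limF(y)})). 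Your description of~(b) as ``the vector one would have to adjoin fails to be square-summable'' is exactly this, but you should make the identification $\sum\nu_j^2/\rho_j^2=\sup_N p_N$ explicit rather than gesture at a singular inner factor.
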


In complement to the above statement, let us mention that   an explicit formula for $c$ is available, as well as an explicit description of the kernel of $\Gamma_c$ when it is non trivial --- see Theorems \ref{TheoHomeo} and \ref{KerHu} below. 

Theorem \ref{HankelautoE} is in fact a consequence of a more general result concerning the singular values of  non necessarily selfadjoint compact Hankel operators. Recall  that the singular values of a bounded operator $T$ on a Hilbert space $\mathcal H$, are given by the following min-max formula. For every $m\ge 1$, denote by $\mathcal F _m$ the set of linear subspaces of $\mathcal H$ of dimension at most $m$.  The $m$-th singular value of $T$ is given by

\begin{equation}\label{minmax}s_m(T)=\min_{F\in \mathcal F_{m-1}}\max_{f\in F^\perp, \Vert f\Vert=1}\Vert T(f)\Vert.\end{equation}

In this paper, we construct a homeomorphism between some set of symbols $c$ and the singular values of $\Gamma_c$ and $ \Gamma_{\tilde c}$ up to the choice of an element in an infinite dimensional torus. 

In order to state this general result we complexify and reformulate  the problem in the Hardy space. We identify $\ell^2(\Z_+)$ with 
$$L^2_+(\T)=\{ u\; :\;  u=\sum _{n=0}^\infty \hat u(n)\, {\rm e}^{inx }\ ,\ \sum _{n=0}^\infty |\hat u(n)|^2<+\infty \ \}$$ and we denote by $\Pi$ the orthogonal projector from $L^2(\T)$ onto $L^2_+(\T).$

Here and in the following, for any space of distributions $E$ on $\T$, the notation $E_+$ stands for the subspace of $E$ consisting of those elements $u$ of $E$ such that $\hat u(n)=0$ for every $n<0$, or equivalently  which can be holomorphically extended to the unit disc. In that case, we will still denote by $u(z)$ the value of this holomorphic extension at the point $z$ of the unit disc.

We endow $L^2_+(\T)$ with the scalar product 
$$(u|v):=\int _{\T}u\overline v\, \frac{dx }{2\pi }$$
and with the associated symplectic form
$$\omega (u,v)=\,{\rm Im}(u|v)\ .$$
For $u$ sufficiently smooth, we define a $\C$-antilinear operator  on $L^2_+$ by
$$H_u(h)=\Pi (u\overline h)\ ,\ h\in L^2_+\ .$$
If $u=u_c$, $$\widehat{H_u(h)}(n)=\Gamma_c(x)_n\ ,\ x_p:=\overline{\hat h(p)}\ .$$
Because of this equality, $H_u$ is called the Hankel operator of symbol $u$. Similarly, $\Gamma_{\tilde c}$ corresponds to the operator $K_u=H_uT_z$ where $T_z$ denotes multiplication by $z$.
Remark that by definition $H_u=H_{\Pi(u)}$. In the following, we always consider holomorphic  symbols $u=\Pi(u)$.

As stated before, by the  Nehari theorem (\cite{Ne}), $H_u$ is well defined and bounded on $L^2_+(\T)$ if and only if $u$ belongs to  $\Pi(L^\infty(\T))$ or to $BMO_+(\T)$.  Moreover, by the Hartman theorem (\cite{Ha}), it is a compact operator if and only if $u$ is the projection of a continuous function on the torus,  or equivalently if and only if it belongs to $VMO_+(\T)$ with equivalent norms.
Furthermore, remark  that this operator $H_u$ is selfadjoint as an antilinear operator in the sense that for any $h_1,h_2\in L^2_+$, $$(h_1\vert H_u(h_2))=(h_2\vert H_u(h_1)).$$ 

A crucial property of Hankel operators is that $H_uT_z=T_z^*H_u$ so that, in particular,
\begin{equation}\label{K_u^2}K_u^2=H_u^2-(\cdot\vert u)u.\end{equation}
 Assume $u\in VMO_+(\T)$ and denote by $(\rho_j)_{j\ge 1}$ the sequence of singular values of $H_u$ labelled according to the min-max formula (\ref{minmax}). Since, via the Fourier transform,  $H_{u}^2$ identifies to $\Gamma_c\Gamma_c^*$ with $c=\hat u$, $(\rho_j)_{j\ge 1}$ is also the sequence of singular values of $\Gamma _{\hat u}$.
 Similarly,  $K_u$ is a compact, so it  has a sequence $(\sigma_j)_{j\ge 1}$ of singular values tending to $0$, which are the singular values of $\Gamma _{\tilde c}$, since $K_u^2$ identifies to $\Gamma _{\tilde c}\Gamma _{\tilde c}^*$. From Equality (\ref{K_u^2}) and the min-max formula (\ref{minmax}), one obtains
$$\rho_1\ge \sigma_1\ge \rho_2\ge \sigma_2\ge \dots\ge \dots \to 0.$$
We denote by $VMO_{+,{\rm gen}}$ the set of $u\in VMO_+(\T)$ such that $H_u$ and $K_u$ admit only simple singular values with strict inequalities, or equivalently such that $H_u^2$ and $K_u^2:=H_u^2-(\cdot\vert u)u$ admit only simple positive eigenvalues $\rho_1^2>\rho_2^2>\dots >\dots \to 0$ and $\sigma_1^2>\sigma_2^2>\dots>\dots \to 0$ so that $$\rho_1^2>\sigma^2_1>\rho_2^2>\sigma^2_2>\dots>\dots \to0.$$
For any integer $N$, we denote by $\mathcal V(2N)$ the set of symbol $u$ such that the rank of $H_u$ and the rank of $K_u$ are both equal to  $N$. By a theorem of Kronecker (see \cite{Kr}), $\mathcal V(2N)$ is a complex manifold of dimension $2N$ consisting of rational functions. One can consider as well  the set $ {\mathcal V}(2N-1)$ of symbols $u$ such that $H_u$ is of rank $N$ and $K_u$ is of rank $N-1$. It defines a complex manifold of rational functions of complex dimension $2N-1$.

By the arguments developed in \cite{GG}, it is straightforward to verify that $VMO_{+,\rm gen}$ is  a dense $G_\delta$ subset of $VMO_+(\T)$.
Indeed, let us consider the set $\mathcal U_N$ which consists of functions $u\in VMO_+(\T)$ such that the $N$ first eigenvalues  of $H_u^2$ and of $K_u^2$ are simple. 
This set is obviously open in $VMO_+(\T)$. Moreover, in Lemma 4 of \cite{GG}, it is proved that $\mathcal U_N\cap \mathcal V(2N):=\mathcal V(2N)_{{\rm gen}}$ is a dense open subset of $\mathcal V(2N)$.
Now any element $u$ in $VMO_{+}$ may be approximated by an element in $\mathcal V(2N')$, $N'>N$, which can be itself approximated by an element in $\mathcal V(2N')_{\rm gen}\subset \mathcal U_N$, since $N'\ge N$. Eventually, $VMO_{+,\rm{gen}}$ is the intersection of the $\mathcal U_N$'s which are open and dense, hence $VMO_{+,\rm{gen}}$ is a dense $G_\delta$ set.\s
 Let $u\in  VMO_{+,\rm{gen}}$. Denote by  $((\rho_j)_{  j\ge 1}$ the singular values of $H_u$ and by $(\sigma_j)_{j\ge 1}$ the singular values of $K_u$.
Using the antilinearity of $H_u$ there exists an orthonormal family $(e_j)_{j\ge 1}$ of the range of $H_u$ such that
$$H_u(e_j)=\rho _je_j\ ,\  j\ge 1.$$
Notice that  the orthonormal family is determined by $u$ up to a change of sign on 
some of the $e_j$. We claim that $(1\vert e_j)\neq 0$. Indeed, if $(1\vert e_j)=0$ then $(u\vert e_j)=\rho _j(e_j\vert 1)=0$ and, in view of (\ref{K_u^2}),  $\rho_j^2$ would be an eigenvalue of $K_u^2$, which contradicts the assumption.
Therefore we can define the angles
$$ \varphi_j(u):={\rm arg}(1|e_j)^2\quad j\ge 1\ .$$
We do the same analysis with the operator $K_u=H_u T_z$. 
As before, by the antilinearity of $K_u$ there exists an orthonormal family $(f_j)_{j\ge 1}$ of the range of $K_u$ such that
$$K_u(f_j)=\sigma _jf_j\ ,\  j\ge 1,$$
and the family is determined by $u$ up to a change of sign on 
some of the $f_j$. One has also $(u\vert f_j)\neq 0$ because of the assumption on the $\rho_j$'s and $\sigma_j$'s. 
We set $$\theta_j(u):={\rm arg}(u|f_j)^2,\; j\ge 1\ .$$
Our main result is the following.
\begin{theo}\label{TheoHomeo}
The mapping   
$$\chi:=u\in VMO_{+,\rm{gen}}\mapsto \zeta=((\zeta_{2j-1}=\rho_je^{-i\varphi_j})_{j\ge 1}, (\zeta_{2j}=\sigma_je^{-i\theta_j})_{j\ge 1})$$  is a homeomorphism
onto
$$\Xi:=\{(\zeta_j)_{j\ge 1}\in \C^{\Z_+}, \; |\zeta_1|> |\zeta_2|>|\zeta_3|>|\zeta_4|\dots>\dots \to 0\}.$$
Moreover, one has an explicit formula for the inverse mapping. Namely, if $\zeta$ is given in $\Xi$, then the Fourier coefficients of $u$ are  given by
\begin{equation}\label{c_nDiffeo}
\hat u(n)=X.A^nY\ ,
\end{equation}
where $A=(A_{jk})_{j,k\ge 1}$ is the bounded operator on $\ell ^2$ defined by
\begin{equation}\label{A}
A_{jk}=\sum _{m=1}^\infty \frac{\nu _j\nu _k\zeta _{2k-1}\kappa _m^2\zeta _{2m}}{(\vert \zeta _{2j-1}\vert ^2-\vert \zeta _{2m}\vert ^2)(\vert \zeta _{2k-1}\vert ^2-\vert \zeta _{2m}\vert ^2)}\ ,\ j,k\ge 1\ ,
\end{equation}
with
\begin{equation}\label{nulambdamu}
\nu _j^2:=\left (1-\frac{\sigma _j^2}{\rho _j^2}\right )\prod _{k\ne j}\left (\frac{\rho _j^2-\sigma _k^2}{\rho _j^2-\rho _k^2}\right )\ ,
\end{equation}
 \begin{equation}\label{kappa}
\kappa_m^2:=\left (\rho_m^2-\sigma _m^2\right )\prod _{\ell\ne m}\left (\frac{\sigma_m^2-\rho _\ell^2}{\sigma_m^2-\sigma_\ell^2}\right )\ ,
\end{equation}
\begin{equation}\label{XY}
X=(\nu _j\zeta _{2j-1})_{j\ge 1}\ ,\  Y=(\nu _j)_{j\ge 1}\ ,\ 
\end{equation}
and $$V.W:=\sum _{j=1}^\infty v_jw_j\ {\rm if}\ V=(v_j)_{j\ge 1}, W=(w_j)_{j\ge 1}\ .$$
\end{theo}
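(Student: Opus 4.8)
The plan is to build the homeomorphism $\chi$ by combining three ingredients: (i) a *forward regularity* statement that $\chi$ is continuous and well-defined on $VMO_{+,\mathrm{gen}}$; (ii) an *explicit inversion formula*, proving that the right-hand side of \eqref{c_nDiffeo} produces a symbol $u\in VMO_{+,\mathrm{gen}}$ with the prescribed spectral data $\zeta$; and (iii) *injectivity*, i.e. that the spectral data $((\rho_j,\varphi_j),(\sigma_j,\theta_j))$ determine $u$ uniquely. Together with the open mapping / invariance-of-domain type argument these give that $\chi$ is a bijection with continuous inverse. The key structural tool throughout is the identity \eqref{K_u^2}, $K_u^2=H_u^2-(\cdot|u)u$, which shows that the spectral subspaces of $H_u^2$ and $K_u^2$ interlace and are coupled by the rank-one perturbation along $u$; this is exactly the mechanism that turns the two eigenvalue sequences plus the two sequences of angles into a *complete* set of coordinates.

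\textbf{Finite-rank model first.} I would first prove the statement on each manifold $\mathcal V(2N)$ and $\mathcal V(2N-1)$, where everything is a genuine finite-dimensional computation with rational symbols, following the Kronecker description. On $\mathcal V(2N)_{\mathrm{gen}}$ one has $N$ eigenvalues $\rho_j^2$ of $H_u^2$, $N$ eigenvalues $\sigma_j^2$ of $K_u^2$, and $N+N$ angles, i.e. $4N$ real parameters, matching $\dim_{\R}\mathcal V(2N)=4N$. The orthonormal eigenvectors $e_j$ of $H_u$ (with $H_ue_j=\rho_je_j$) form a basis of $\mathrm{Ran}\,H_u$; expanding $u=\sum_j(u|e_j)e_j$ and using $(u|e_j)=\rho_j\overline{(1|e_j)}$ one expresses $u$, and more generally $\langle T_z^n\cdot\rangle$, in terms of the $\rho_j$, the $\sigma_j$ (which enter through the matrix of $K_u^2$ in the basis $(e_j)$, a rank-one perturbation of $\mathrm{diag}(\rho_j^2)$) and the phases. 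Diagonalizing that rank-one perturbation and matching residues is precisely what yields formulas \eqref{nulambdamu}–\eqref{kappa}: $\nu_j^2=|(1|e_j)|^2$ comes out of the classical interpolation identity for a rank-one update, and $\kappa_m^2$ is its dual counterpart for the $f_j$'s. The operator $A$ in \eqref{A} is then recognized as the matrix of $T_z$ (more precisely of $S^*$ restricted appropriately) in the $(e_j)$ basis, so that $\hat u(n)=(u|T_z^{*n}1)$ collapses to $X\cdot A^nY$. I would verify $H_u$, $K_u$ indeed have the claimed singular values by checking $H_u e_j=\rho_j e_j$ and $(1|e_j)^2=\nu_j^2e^{i\varphi_j}$ directly from the formula, which reduces to a Cauchy/Vandermonde determinant evaluation.

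\textbf{Passage to the limit.} Having the explicit formula on the finite-rank strata with uniform bounds, I would pass to $\zeta\in\Xi$ by a density/compactness argument: truncating $\zeta$ to its first $2N$ entries gives $u^{(N)}\in\mathcal V(2N)_{\mathrm{gen}}$, and one needs that $A^{(N)}\to A$ and $X^{(N)},Y^{(N)}\to X,Y$ strongly enough that $\hat u^{(N)}(n)\to\hat u(n)$ and the $u^{(N)}$ converge in $VMO_+$. The boundedness of $A$ on $\ell^2$ and the summability of the $\nu_j$, $\kappa_m$ must be extracted from the interlacing $|\zeta_1|>|\zeta_2|>\cdots\to0$; here the two kernel-triviality conditions $\sum(1-\mu_j^2/\lambda_j^2)=\infty$ and $\sup_N\lambda_{N+1}^{-2}\prod\mu_j^2/\lambda_j^2=\infty$ are the thresholds distinguishing $\ker\ne\{0\}$ from $\ker=\{0\}$ and will reappear. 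Continuity of $\chi$ itself is comparatively soft: simple eigenvalues and one-dimensional eigenspaces depend continuously (holomorphically) on $u$ by perturbation theory, as do the angles since $(1|e_j)\ne0$ and $(u|f_j)\ne0$ on $VMO_{+,\mathrm{gen}}$; combined with $\chi^{-1}$ being explicit and continuous, and $\chi\circ\chi^{-1}=\mathrm{id}$, $\chi^{-1}\circ\chi=\mathrm{id}$ on a dense set extending by continuity, one gets the homeomorphism.

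\textbf{The main obstacle} I expect is the convergence/estimate step: showing that the series defining $A_{jk}$ converges, that $A$ is a bounded operator on $\ell^2$, that $X,Y\in\ell^2$, and that the resulting $u$ genuinely lies in $VMO_+$ (not merely in $\ell^2$ of Fourier coefficients) with $H_u,K_u$ compact and having \emph{exactly} the prescribed singular values — no spurious eigenvalue escaping to the limit, and the kernels behaving as dictated by the two displayed conditions. Equivalently, this is the injectivity-of-the-limit and surjectivity-onto-$\Xi$ part: one must rule out that two distinct $u$'s share the data, and that the finite-rank approximants could converge to something outside $VMO_{+,\mathrm{gen}}$. The finite-rank identities are exact algebra; promoting them to the infinite-dimensional setting with the sharp interlacing hypothesis is where the real work lies, and it is there that the specific products in \eqref{nulambdamu}–\eqref{kappa} must be shown to define the correct $\ell^2$ objects.
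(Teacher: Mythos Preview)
Your overall architecture --- prove the finite-rank case on $\mathcal V(2N)_{\mathrm{gen}}$ first, then pass to the limit by truncating $\zeta$, and use the explicit formula for injectivity --- matches the paper's. You also correctly locate the main obstacle: showing that the limit $u$ lies in $VMO_+$ with \emph{exactly} the prescribed singular values, not merely that $\hat u^{(N)}(n)\to\hat u(n)$ coefficientwise.

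The gap is that you propose to overcome this obstacle by (a) verifying the spectral data of $H_u,K_u$ directly from the explicit formula via Cauchy/Vandermonde algebra, and (b) proving $A^{(N)}\to A$, $X^{(N)}\to X$, $Y^{(N)}\to Y$ with enough control to force $u^{(N)}\to u$ in $VMO_+$. The paper explicitly flags route (a) as difficult and does \emph{not} take it; and route (b), as stated, only yields pointwise convergence of Fourier coefficients, which is far from $VMO$ (or even $BMO$) convergence and does not by itself pin down the singular values of the limit. What is missing from your plan are the two specific tools the paper uses to close this step. First, the \emph{Adamyan--Arov--Krein theorem}: applied to each $H_{u_N}$, it produces rank-$j$ Hankel approximants $u_{N,j}$ with $\|H_{u_N}-H_{u_{N,j}}\|=\rho_{j+1}(u_N)$, and since these are uniformly bounded in $H^{1/2}$ one extracts $L^2$-precompactness of $(u_N)$, hence strong operator convergence $H_{u_N}\to H_u$. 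Second, the \emph{trace formula} $J(x)=((I-xH_u^2)^{-1}1\,|\,1)=\prod_j(1-\sigma_j^2 x)/(1-\rho_j^2 x)$: passing this identity to the limit (legitimate once one has strong convergence) identifies the poles and zeros of $J(x)(H_u^2)$, and hence the full sequences $(\rho_j(u))$ and $(\sigma_j(u))$, with the target values. Without AAK you have no mechanism to upgrade weak $VMO$ convergence to strong, and without the trace identity you have no clean way to rule out collapse or leakage of eigenvalues in the limit. Your invariance-of-domain remark is also not how the paper proceeds: bijectivity is obtained directly (surjectivity via the compactness lemma, injectivity via the formula), and continuity of $\chi^{-1}$ is a second application of the same compactness lemma, not an open-mapping argument.
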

Theorem \ref{TheoHomeo} calls for several comments. Firstly, it is not difficult to see that the first part of Theorem \ref{HankelautoE} is a direct consequence of Theorem \ref{TheoHomeo} (see the end of Section 3 below).  More generally, as an immediate corollary of Theorem \ref{TheoHomeo}, one shows that, for any given sequences $(\rho_j)_{j\ge 1}$ and $(\sigma_j)_{j\ge 1}$ satisfying
$$\rho_1> \sigma_1> \rho_2>\sigma_2> \dots \to 0,$$
there exists an infinite dimensional torus of symbols $c$ such that the $(\rho_j)_{j\ge 1}$'s are the non zero singular values of $\Gamma_c$, and the  $(\sigma_j)_{j\ge 1}$'s are the non zero singular values of $\Gamma_{\tilde c}$.

Next we make the connection with previous results. In a preceding article (\cite{GG2}),  we have obtained an analogue of Theorem \ref{TheoHomeo} in the more restricted context of 
Hilbert-Schmidt Hankel operators. This result arises in \cite{GG2} as a byproduct of the study of the dynamics of some completely integrable Hamiltonian system called the cubic Szeg\"o equation (see \cite{GG} and \cite{GG2}). In this setting the phase space of this Hamiltonian system is the Sobolev space $H^{1/2}_+$, which is the space of symbols of Hilbert-Schmidt Hankel operators, and the restriction of the mapping $\chi$ to the phase space can be interpreted as an action-angle map. In the present paper, we extend this result to compact Hankel operators, which is the natural setting for an inverse spectral problem.

Finally, we would like to comment about   the above explicit formula giving $\hat u(n)$. The boundedness of operator $A$ defined by (\ref{A}) is not trivial. In fact, it is a consequence of the proof of the theorem.
However, it is possible to give a direct proof of this boundedness, see Appendix 2. Furthermore, from the complicated structure of  formula (\ref{c_nDiffeo}), it seems difficult to check directly that the corresponding Hankel operators have the right sequences of singular values, namely that the map $\chi $ is onto. Our proof is in fact completely different and is based on some compactness argument, while, as in \cite{GG2}, the explicit formula is only used to establish the injectivity of $\chi $.

We now state our last result, which describes the kernel of $H_u$ in terms of the $\zeta =\chi (u)$. 

As $\ker H_u$ is invariant by the shift,  the Beurling theorem --- see {\it e.g.} \cite{R}--- provides the existence of an inner function $\varphi$ so that $\ker H_u=\varphi L^2_+$. We use the notation of Theorem \ref{TheoHomeo} to describe $\varphi$. Denote by $R$ the range of $H_u$.

\begin{theo}\label{KerHu}
We keep the notation of Theorem \ref{TheoHomeo}. Let $u\in VMO_{+,\rm gen}$. 
The kernel of $H_u$ and the kernel of $K_u$ are reduced to zero if and only if $1\in\overline R\setminus R$ or if and only if the following conditions hold.
\begin{equation}\label{HuOneToOne}
\sum_{j=1}^\infty \left( 1-\frac{\sigma_j^2}{\rho_j^2}\right)=\infty,\; \;\sup_N\frac 1{\rho_{N+1}^2}\prod_{j=1}^N\frac{\sigma_j^2}{\rho_j^2}=\infty.\end{equation}
When these conditions are not satisfied, $\ker H_u=\varphi L^2_+$  with $\varphi$ inner satisfying
\begin{enumerate}
\item if $1$ does not belong to the closure of the range of $H_u$ {\it i.e.} $1\notin\overline R$, then
$$\varphi(z)=(1-\sum \nu_j^2)^{-1/2}(1-\sum_{n\ge 0} \alpha_n z^n)$$
where 
\begin{equation}\label{phi}
\alpha_n=Y.A^nY\end{equation}
Furthermore, $\ker K_u=\ker H_u=\varphi L^2_+$.
\item if $1$ belongs to the range of $H_u$, {\it i.e.} $1\in R$, then  $\varphi(z)=z\psi(z)$  with
$$\psi(z)=\left(\sum_{j=1}^\infty\frac{\nu_j^2}{\rho_j^2}\right)^{-1/2}\sum_{n\ge 0} \beta_n z^n$$
where 
\begin{equation}\label{tildephi}
\beta_n=W.A^nY\ ,\ W=(\nu _j\zeta _{2j-1}\rho _j^{-2})_{j\ge 1}\ .\end{equation}
Furthermore, $\ker K_u=\ker H_u\oplus\C H_u^{-1}(1)=\varphi L^2_+\oplus \C \psi$.
\end{enumerate} 

\end{theo}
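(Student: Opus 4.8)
The plan rests on a few structural facts. Since $H_u$ and $K_u=H_uT_z=T_z^*H_u$ are compact and selfadjoint as antilinear operators (for $K_u$: $(h_1|K_uh_2)=(T_zh_2|H_uh_1)=(h_2|K_uh_1)$), one has the orthogonal decompositions $L^2_+=\ker H_u\oplus^\perp\overline R$ and $L^2_+=\ker K_u\oplus^\perp\overline{{\rm Ran}\,K_u}$, where $R={\rm Ran}\,H_u$ (indeed $h\perp\overline R$ iff $(g|H_uh)=(h|H_ug)=0$ for all $g$, i.e.\ $h\in\ker H_u$). From $H_uT_z=T_z^*H_u$ one gets that $\ker H_u$ is $T_z$-invariant, hence $\overline R$ is $T_z^*$-invariant and $\ker H_u\subseteq\ker K_u$; by Beurling's theorem $\ker H_u=\varphi L^2_+$ for an inner $\varphi$ when it is nonzero. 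The first point is the dichotomy: since $1\perp\varphi L^2_+$ iff $\varphi(0)=0$, the condition $1\in\overline R=(\ker H_u)^\perp$ is equivalent to $\varphi(0)=0$; in that case $\varphi=z\psi$ with $\psi$ inner and $\ker K_u=\{h:zh\in\ker H_u\}=\psi L^2_+=\ker H_u\oplus^\perp\C\psi$, while if $\varphi(0)\neq0$ then $1\notin\overline R$ and $\ker K_u=\ker H_u$; finally $\ker H_u=\{0\}$ forces $\overline R=L^2_+\ni1$ and $\ker K_u=\{0\}$. So the exhaustive, mutually exclusive cases ``$1\notin\overline R$'', ``$1\in\overline R\setminus R$'', ``$1\in R$'' are to be matched with case (1), the triviality statement, and case (2).

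The heart of the matter is the elementary lemma: \emph{if $1\in\overline R$ and $\ker H_u\neq\{0\}$, then $1\in R$, and $\C\psi=\C H_u^{-1}(1)$.} Indeed then $\varphi=z\psi$ and $\psi\in\ker K_u$, so $0=K_u\psi=T_z^*(H_u\psi)$, which forces $H_u\psi\in\ker T_z^*=\C\cdot1$; moreover $H_u\psi\neq0$ since $\psi\notin z\psi L^2_+=\ker H_u$, while $\psi\in\overline R$ (one checks $\psi\perp z\psi L^2_+$ using $|\psi|=1$ a.e.). Writing $H_u\psi=c\cdot1$ with $c\neq0$ and using antilinearity, $H_u(\bar c^{-1}\psi)=1$, so $1\in R$; and $\bar c^{-1}\psi\in\overline R$ is \emph{the} preimage of $1$ in $\overline R$, whence $\C\psi=\C H_u^{-1}(1)$ and $\ker K_u=\ker H_u\oplus^\perp\C H_u^{-1}(1)$. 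Conversely, if $\ker H_u=\{0\}$ and $1=H_ug$ then $K_ug=T_z^*(H_ug)=T_z^*1=0$, so $g\in\ker K_u=\{0\}$ and $1=0$, absurd; hence $\ker H_u=\{0\}\Rightarrow1\notin R$. Together with $\ker H_u\subseteq\ker K_u$ this yields $\{\ker H_u=\{0\}\}\Leftrightarrow\{\ker K_u=\{0\}\}\Leftrightarrow\{1\in\overline R\setminus R\}$, i.e.\ the announced characterisation of triviality, and confirms that the non-trivial cases are exactly (1) ($1\notin\overline R$) and (2) ($1\in R$).

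I would then read off the position of $1$ and the explicit formulas from the spectral data. From Theorem \ref{TheoHomeo} (and its proof) one has $|(1|e_j)|^2=\nu_j^2$; Bessel's inequality in the orthonormal system $(e_j)$ of $\overline R$ gives $\|P_{\ker H_u}1\|^2=1-\sum_j\nu_j^2\ge0$, so $1\in\overline R\Leftrightarrow\sum_j\nu_j^2=1$; and $1\in R$ iff the formal preimage $g=\sum_j\overline{(1|e_j)}\,\rho_j^{-1}e_j$ lies in $L^2_+$, i.e.\ $\sum_j\nu_j^2\rho_j^{-2}<\infty$ (and then $H_ug=P_{\overline R}1$). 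For case (1): the orthogonal projection onto $\ker H_u=\varphi L^2_+$ sends $1$ to $\varphi\,\Pi(\bar\varphi)=\overline{\varphi(0)}\,\varphi$, so $\varphi$ is, up to a unimodular constant, $P_{\ker H_u}1/\|P_{\ker H_u}1\|$; expanding $P_{\ker H_u}1=1-P_{\overline R}1$ in the $(e_j)$-coordinates of Theorem \ref{TheoHomeo}, where (up to the phase normalisation used there) $T_z^*|_{\overline R}$ becomes $A$ and $P_{\overline R}1$ becomes $Y$, gives $P_{\ker H_u}1=1-\sum_n(Y.A^nY)z^n$, i.e.\ formula (\ref{phi}); in particular $(1-\sum\nu_j^2)^{-1/2}(1-\sum\alpha_nz^n)$ is inner. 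For case (2): the lemma identifies $\psi$ with $g/\|g\|$ up to a unimodular constant, and the same computation gives $g=\sum_n(W.A^nY)z^n$ with $W=(\nu_j\zeta_{2j-1}\rho_j^{-2})_{j\ge1}$, i.e.\ (\ref{tildephi}), together with $\varphi=z\psi$ and $\ker K_u=\varphi L^2_+\oplus^\perp\C\psi$.

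It remains to recognise $\{\sum_j\nu_j^2=1\}$ and $\{\sum_j\nu_j^2\rho_j^{-2}=\infty\}$ as conditions (\ref{HuOneToOne}); this is where the real computation lies. Starting from formula (\ref{nulambdamu}) I would establish, for every $t>0$, the partial-fraction (perturbation-determinant) identities
\begin{equation*}
\sum_j\frac{\rho_j^2\nu_j^2}{\rho_j^2+t}=1-\prod_k\frac{\sigma_k^2+t}{\rho_k^2+t},\qquad
\sum_j\frac{\nu_j^2}{\rho_j^2+t}=\frac1t\left(\prod_k\frac{\sigma_k^2+t}{\rho_k^2+t}-\prod_k\frac{\sigma_k^2}{\rho_k^2}\right),
\end{equation*}
where all infinite products are genuine, the interlacing forcing $\sum_k(\rho_k^2-\sigma_k^2)\le\sum_k(\rho_k^2-\rho_{k+1}^2)=\rho_1^2<\infty$; the cleanest proof is to approximate $u$ by symbols in $\mathcal V(2N)_{\rm gen}$, for which these reduce to the Sherman--Morrison determinant formula, and to pass to the limit. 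Letting $t\to0^+$ (monotone convergence) yields $\sum_j\nu_j^2=1-\prod_k\sigma_k^2\rho_k^{-2}$, so $\sum_j\nu_j^2=1\Leftrightarrow\prod_k\sigma_k^2\rho_k^{-2}=0\Leftrightarrow\sum_k(1-\sigma_k^2\rho_k^{-2})=\infty$, the first condition in (\ref{HuOneToOne}); and, when this holds, $\sum_j\nu_j^2\rho_j^{-2}=\lim_{t\to0^+}t^{-1}\prod_k(\sigma_k^2+t)(\rho_k^2+t)^{-1}=\sup_{t>0}t^{-1}\prod_k(\sigma_k^2+t)(\rho_k^2+t)^{-1}$, which by comparing the continuous parameter $t$ with the scales $\rho_{N+1}^2$ (using again $\rho_{N+1}^2<\sigma_N^2<\rho_N^2$) is infinite iff $\sup_N\rho_{N+1}^{-2}\prod_{k\le N}\sigma_k^2\rho_k^{-2}=\infty$, the second condition in (\ref{HuOneToOne}). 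This gives $\{(\ref{HuOneToOne})\}\Leftrightarrow\{1\in\overline R\setminus R\}\Leftrightarrow\{\ker H_u=\ker K_u=\{0\}\}$, and when (\ref{HuOneToOne}) fails the two remaining cases are precisely (1) and (2). The main obstacle I anticipate is exactly this last paragraph: proving the product identities and, above all, the equivalence between the continuous condition in $t$ and the discrete one in $N$ in the merely compact (non Hilbert--Schmidt) regime, where one has no absolute convergence to fall back on and must exploit the interlacing quantitatively, and where the finite-rank approximation must be controlled uniformly in the rank — using the density of $\mathcal V(2N)_{\rm gen}$ recorded after Theorem \ref{TheoHomeo}.
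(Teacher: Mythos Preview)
Your proposal is correct and follows essentially the same route as the paper: the same dichotomy via Beurling, the same key lemma (``$1\in\overline R$ and $\ker H_u\neq\{0\}$ force $1\in R$'' via $\varphi=z\psi$ and $H_u\psi\in\C\setminus\{0\}$), the same translation of $1\in\overline R$ and $1\in R$ into $\sum_j\nu_j^2=1$ and $\sum_j\nu_j^2\rho_j^{-2}<\infty$, and the same product--formula argument for (\ref{HuOneToOne}) (your $t>0$ is the paper's $y=-x>0$, and your discrete--versus--continuous comparison is exactly the paper's monotonicity trick $\sup_N p_N=\sup_N\sup_y F_N(y)=\sup_y\sup_N F_N(y)=\sup_y F(y)$, which also disposes of the obstacle you flag). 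Your extraction of $\varphi$ in case (1) via the projection formula $P_{\varphi L^2_+}1=\varphi\,\Pi(\overline{\varphi})=\overline{\varphi(0)}\,\varphi$ is a mild streamlining of the paper's holomorphicity argument for the same conclusion $\varphi\propto 1-P_u(1)$.
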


We end this introduction by describing the organization of this paper. In Section 2, we start the proof of Theorem \ref{TheoHomeo}. We first recall from \cite{GG2} a finite dimensional analogue to Theorem 
\ref{TheoHomeo}. Then we generalize from \cite{GG2} an important trace formula to arbitrary compact Hankel operators. We then use this formula and the Adamyan-Arov-Krein theorem to derive 
a crucial compactness lemma about Hankel operators. Using this compactness lemma, we prove Theorem \ref{TheoHomeo} in Section 3, and we infer the first part of Theorem \ref{HankelautoE}.
Section 4 is devoted to the proof of Theorem \ref{KerHu}, from which the second part of Theorem  \ref{HankelautoE} easily follows. Finally, for the convenience of the reader, we have gathered 
in Appendix 1 the main steps of the proof of the finite dimensional analogue of Theorem \ref{TheoHomeo}, while Appendix 2 is devoted to a direct proof of the boundedness of operator $A$
involved in Theorem \ref{TheoHomeo}.

\section{Preliminary results}

 The  proof of Theorem \ref{TheoHomeo}  is based on a  finite rank approximation of $H_u$. We first recall the notation and a similar  result obtained on finite rank operators in \cite{GG2}.

\subsection{The finite rank result}

By a theorem due to Kronecker (\cite{Kr}),
the Hankel operator $H_u$ is of finite rank if and only if $u$ is a rational function, holomorphic in the unit disc.
As in the introduction, we consider ${\mathcal V}(2N)$  the set of rational functions $u$, holomorphic in the unit disc, so that $H_u$  and $K_u$ are of finite rank $N$.  
It is elementary to check that ${\mathcal V}(2N)$ is a
$2N$-dimensional complex submanifold of $L^2_+$ (we refer to \cite{GG} for a complete description of this set and for an elementary proof of Kronecker Theorem). 
We denote by $\mathcal V(2N)_{\rm gen}$ the set of functions  $u\in {\mathcal V}(2N)$ such that $H_u^2$ and $K_u^2$ have simple distinct eigenvalues $(\rho_j^2)_{1\le j\le N}$ and $(\sigma_m^2)_{1\le m\le N}$ respectively with
$$\rho_1^2>\sigma_1^2>\rho_2^2>\dots\rho_N^2>\sigma_N^2>0.$$

As in the introduction, we can define new variables on $\mathcal V(2N)_{\rm gen}$ and a corresponding mapping $\chi_N$. 
The following result has been proven in \cite{GG2}.
\begin{theo}\label{TheoDiffeoMN}

The mapping   
$$\chi_N:=u\in\mathcal V(2N)_{\rm{gen}}\mapsto \zeta=(\zeta_{2j-1}=\rho_je^{-i\varphi_j}, \zeta_{2j}=\sigma_je^{-i\theta_j})_{1\le j \le N}$$  is a symplectic diffeomorphism
onto
$$\Xi_N:=\{\zeta\in\C^{2N}, \; |\zeta_1|>|\zeta_2|> |\zeta_3|>|\zeta_4|>\dots>|\zeta_{2N-1}|>|\zeta_{2N}|>0\}$$
in the sense that the image of the symplectic form $\omega$ by $\chi_N$ satisfies
\begin{equation}\label{ChiStarOmegaN}
({\chi_N})_{*}\omega=\frac 1{2i}\sum_{1\le j\le 2N}d\zeta_j\wedge d\overline{\zeta_j}.
\end{equation}
\end{theo}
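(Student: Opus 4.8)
The plan is to establish, in order, that $\chi_N$ is (a) a well-defined smooth map of $\mathcal V(2N)_{\rm gen}$ into $\Xi_N$, (b) injective with inverse given by the announced formula, (c) a local diffeomorphism and onto, and (d) symplectic in the sense of (\ref{ChiStarOmegaN}). \emph{(a) Smoothness.} On $\mathcal V(2N)_{\rm gen}$ the eigenvalues $\rho_1^2>\dots>\rho_N^2>0$ of $H_u^2$ and $\sigma_1^2>\dots>\sigma_N^2>0$ of $K_u^2$ are simple, hence depend real-analytically on $u$ (standard perturbation theory), together with the associated rank-one spectral projectors. Since $H_u$ is $\C$-antilinear, each such eigenspace of $H_u^2$ carries a unit vector $e_j$ with $H_ue_j=\rho_je_j$, unique up to sign, so $(1|e_j)^2$ is an unambiguous smooth function of $u$; it is nonzero, for otherwise $\rho_j^2$ would, in view of (\ref{K_u^2}), be an eigenvalue of $K_u^2$, contradicting genericity. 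Likewise $(u|f_j)^2$ is smooth and nonzero, with $K_uf_j=\sigma_jf_j$. Hence $\chi_N$ is smooth, and (\ref{K_u^2}) with the min--max formula (\ref{minmax}) give $\rho_1\ge\sigma_1\ge\rho_2\ge\dots\ge\sigma_N>0$, all inequalities strict on the generic locus, so $\chi_N(u)\in\Xi_N$.

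\emph{(b) Injectivity and explicit inverse.} Starting from $u=H_u(1)$ and, after decomposing $1$ along $\ker H_u$ and the eigenlines $\C e_j$, from the expansion $u=\sum_{j=1}^N\rho_j\,\overline{(1|e_j)}\,e_j$ (antilinearity of $H_u$), one gets $\hat u(n)=(u|z^n)=\sum_j\rho_j\overline{(1|e_j)}\,(e_j|z^n)$. The commutation identity $K_u=H_uT_z=T_z^*H_u$ shows that the range $R$ of $H_u$, which coincides with the range of $K_u$ (as $K_u=H_uT_z$ and both have rank $N$), is $T_z^*$-invariant and that $T_z^*e_j=\rho_j^{-1}K_ue_j$; hence $(e_j|z^n)=((T_z^*)^ne_j\,|\,1)$ reduces to iterating the $N\times N$ matrix of $T_z^*|_R$ and pairing against $1$. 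Expressing this matrix through the two orthonormal eigenbases $(e_j)$ of $H_u$ and $(f_m)$ of $K_u$, one uses the identity $(e_j|f_m)=\frac{(u|f_m)\,(e_j|u)}{\rho_j^2-\sigma_m^2}$, obtained from $H_u^2f_m=\sigma_m^2f_m+(f_m|u)u$, which makes the coupling of Cauchy type; a partial-fraction computation on the interlacing data $(\rho_j^2),(\sigma_m^2)$ produces exactly the residues $\nu_j^2,\kappa_m^2$ of (\ref{nulambdamu})--(\ref{kappa}), and, after bookkeeping the remaining phases, one arrives at $\hat u(n)=X.A^nY$ with $A,X,Y$ the $N\times N$ versions of (\ref{A}),(\ref{XY}). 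Since the right-hand side depends only on $\zeta=\chi_N(u)$, $\chi_N$ is injective, and the formula exhibits a candidate inverse $\Psi$, manifestly smooth on $\Xi_N$ because the denominators in (\ref{A}),(\ref{nulambdamu}),(\ref{kappa}) never vanish there and the finite matrix $A$ has spectral radius $<1$ (its eigenvalues are the poles of $u$, which lie in the open disc).

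\emph{(c)--(d) Diffeomorphism and symplectic identity.} From $\Psi\circ\chi_N=\mathrm{id}$ the differential $d\chi_N$ is everywhere injective; as $\dim_\R\mathcal V(2N)_{\rm gen}=4N=\dim_\R\Xi_N$, $\chi_N$ is a local diffeomorphism, hence an open embedding onto an open set $\Omega\subseteq\Xi_N$. To see $\Omega=\Xi_N$, note that $\Omega$ is also closed in $\Xi_N$: if $\zeta_k=\chi_N(u_k)\to\zeta\in\Xi_N$ then $u_k=\Psi(\zeta_k)\to\Psi(\zeta)$ in $L^2_+$, and since the $\rho_j(u_k),\sigma_j(u_k)$ converge to the distinct positive moduli of the components of $\zeta$, a strong-convergence argument for the finite-rank operators $H_{u_k},K_{u_k}$ shows that $H_{\Psi(\zeta)},K_{\Psi(\zeta)}$ have these as singular values, with the angles passing to the limit by continuity of the (simple) spectral projectors; thus $\Psi(\zeta)\in\mathcal V(2N)_{\rm gen}$ and $\chi_N(\Psi(\zeta))=\zeta$. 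As $\Xi_N$ is connected, $\Omega=\Xi_N$ and $\chi_N$ is a diffeomorphism. (Alternatively, surjectivity follows from a direct but lengthy verification that $\chi_N\circ\Psi=\mathrm{id}$.) Finally, writing $\zeta_{2j-1}=\rho_je^{-i\varphi_j}$, $\zeta_{2j}=\sigma_je^{-i\theta_j}$ one computes $\frac1{2i}\sum_jd\zeta_j\wedge d\overline{\zeta_j}=\tfrac12\sum_j\bigl(d(\rho_j^2)\wedge d\varphi_j+d(\sigma_j^2)\wedge d\theta_j\bigr)$, so (\ref{ChiStarOmegaN}) amounts to the Darboux identity $\omega=\tfrac12\sum_j\bigl(d(\rho_j^2)\wedge d\varphi_j+d(\sigma_j^2)\wedge d\theta_j\bigr)$ on $\mathcal V(2N)_{\rm gen}$, equivalently to the Poisson relations saying that $(\tfrac12\rho_j^2,\varphi_j)$ and $(\tfrac12\sigma_j^2,\theta_j)$ are canonically conjugate pairs and all other brackets vanish. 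I would prove these by first-order perturbation theory --- e.g. $d(\rho_j^2)_u(h)=2\rho_j\,{\rm Re}\,(e_j^2|h)$, with analogous formulas for $d\varphi_j,d(\sigma_j^2),d\theta_j$ --- computing the Hamiltonian vector fields of $\rho_j^2$ and $\sigma_j^2$ and evaluating the remaining one-forms on them; equivalently one reads them off the Lax-pair structure of the cubic Szeg\"o hierarchy, under whose commuting flows the $\rho_j^2,\sigma_j^2$ are conserved while the $\varphi_j,\theta_j$ advance at explicit constant rates.

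\emph{Main obstacle.} The heart of the matter is step (b): recognizing the Cauchy-type coupling of the two eigenbases, inverting it in closed form, and reorganizing the result --- with all the residue and phase bookkeeping --- into the compact shape $\hat u(n)=X.A^nY$. On the symplectic side, the perturbative computation of the Poisson brackets in step (d) is the other delicate point; by contrast the topological wrap-up in step (c) is routine once $\Psi$ is in hand.
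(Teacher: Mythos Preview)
Your outline tracks the paper's own sketch closely: injectivity via the explicit formula, local diffeomorphism, surjectivity by a properness/closedness argument on the connected target $\Xi_N$, and the symplectic identity via Poisson brackets. Two points deserve comment, the first of which is a genuine gap.

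\textbf{The angle--angle brackets are missing in step (d).} Computing the Hamiltonian vector fields of $\rho_j^2$ and $\sigma_j^2$ --- whether by first-order perturbation or via the Szeg\"o-hierarchy Lax pairs --- gives you only the brackets in which at least one entry is an action: $\{\rho_j^2,\rho_k^2\}$, $\{\rho_j^2,\sigma_k^2\}$, $\{\sigma_j^2,\sigma_k^2\}$, $\{\rho_j^2,\varphi_k\}$, $\{\rho_j^2,\theta_k\}$, $\{\sigma_j^2,\varphi_k\}$, $\{\sigma_j^2,\theta_k\}$. Those relations alone pin $(\chi_N)_*\omega$ down only to $\sum_j(\rho_j\,d\rho_j\wedge d\varphi_j+\sigma_j\,d\sigma_j\wedge d\theta_j)+\tilde\omega$, where $\tilde\omega$ is a closed $2$-form in the $d\rho_j,d\sigma_m$ alone; inverting to the Poisson side, $\tilde\omega$ encodes precisely the unknown brackets $\{\varphi_j,\varphi_k\}$, $\{\varphi_j,\theta_k\}$, $\{\theta_j,\theta_k\}$, which neither of the methods you name can see. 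The paper kills $\tilde\omega$ by a separate trick: restrict to the real submanifold $\Lambda_N=\{\varphi_j=\theta_j=0\ \forall j\}$. By the explicit formula every $u\in\Lambda_N$ has real Fourier coefficients, hence $\omega|_{\Lambda_N}=0$; on the other hand $(\chi_N)_*\omega|_{\chi_N(\Lambda_N)}=\tilde\omega$, and since the $\rho_j,\sigma_m$ are coordinates on $\Lambda_N$, this forces $\tilde\omega=0$. (In the original reference the same conclusion was reached by further direct, and lengthier, bracket computations.)

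\textbf{A smaller circularity in step (c).} Your claim that the matrix $A=A(\zeta)$ has spectral radius $<1$ because ``its eigenvalues are the poles of $u$'' is justified only once you already know $\zeta=\chi_N(u)$ for some $u$; for a general $\zeta\in\Xi_N$ this is part of what you are trying to prove, so invoking $\Psi(\zeta)\in L^2_+$ in the closedness argument is premature. The paper avoids this by not using $\Psi$ at that stage: given $\chi_N(u_p)\to\zeta\in\Xi_N$ it only uses $\Vert H_{u_p}\Vert=\rho_1(u_p)$ bounded to extract a weak VMO limit, and then the compactness result (Proposition~\ref{compactness}) to upgrade to strong convergence and identify the limiting singular values. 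Your use of $\Psi\circ\chi_N=\mathrm{id}$ to get that $d\chi_N$ is injective, by contrast, is perfectly fine and a pleasant alternative to the paper's route, which deduces maximal rank of $d\chi_N$ from the Poisson identities themselves.
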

There is also an explicit formula for the inverse $\chi_N$ analogous to the one given in Theorem  \ref{TheoHomeo} except that the sums in formulae (\ref{c_nDiffeo}) run over the integers $1,\dots, N$.

In order  to prove the extension of Theorem \ref{TheoDiffeoMN} to $VMO_{+,\rm{gen}}$, we have to extend some tools introduced in \cite{GG2}.

\subsection{The functional $J(x)$}

Let $H$ be a compact selfadjoint antilinear  operator on a Hilbert space $\mathcal H$. Let $A=H^2$ and $e\in\mathcal H$ so that $\Vert e\Vert =1$. Notice that $A$ is selfadjoint, positive and compact. We define the generating function of $H$ for $|x|$ small, by  $$J(x)(A)=1+\sum_{n=1}^\infty x^nJ_{n}$$ where $J_{n}=J_{n}(A)=(A^{n}(e)\vert e)$. Consider the operator $$B:=A-(\;\cdot\;\vert H(e))H(e)$$ which is also selfadjoint, positive and compact. Denote by $(a_j)_{j\ge 1}$ (resp. $(b_j)_{j\ge 1}$) the non-zero eigenvalues of $A$ (resp. of $B$) labelled according to the min-max principle, 
$$a_1\ge b_1\ge a_2\ge  \dots $$
Notice that
$$J(x)(A)=((I-xA)^{-1}(e)\vert e)\ $$
which shows that $J$ extends as an entire meromorphic function, with poles at $x=\frac 1{a_j}, j\ge 1$.

 \begin{proposition}\label{trace}
  \begin{equation}\label{ProdHmu}
 J(x)(A)=\prod_{j=1}^\infty \frac{1-b_jx}{1-a_jx}\ ,\  x\notin \left \{ \frac 1{a _j}, j\ge 1\right \}.
 \end{equation}
\end{proposition}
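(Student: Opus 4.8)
The plan is to prove the product formula \eqref{ProdHmu} by showing that both sides are meromorphic functions of $x$ with the same poles and zeros and the same value at $x=0$. First I would observe, as noted just before the statement, that $J(x)(A)=((I-xA)^{-1}(e)\,|\,e)$, so $J$ is meromorphic in $x$ with simple poles exactly at the points $1/a_j$ where $a_j$ runs over the nonzero eigenvalues of $A$ (simple poles since, $e$ being cyclic-like with respect to the relevant spectral projections — more precisely since $(1|e_j)\neq 0$ in the generic situation, though here one only needs that the residue at $1/a_j$ is nonzero exactly when the spectral projection of $A$ at $a_j$ applied to $e$ is nonzero). The right-hand side $\prod_j \frac{1-b_jx}{1-a_jx}$ is also meromorphic with poles among the $1/a_j$ and zeros among the $1/b_j$, and the convergence of the infinite product follows from the interlacing $a_1\ge b_1\ge a_2\ge\cdots$ together with the fact that $\sum_j (a_j-b_j)<\infty$: indeed $B=A-(\cdot\,|\,H(e))H(e)$ is a rank-one perturbation of $A$, so $\mathrm{tr}(A-B)=\|H(e)\|^2=(A e\,|\,e)=J_1<\infty$, hence $\sum_j(a_j-b_j)=\|H(e)\|^2<\infty$ and the product converges locally uniformly away from the poles.

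Next I would identify the zeros of $J$. The key algebraic point is the resolvent identity relating $A$ and $B$: writing $B=A-(\cdot\,|\,w)w$ with $w=H(e)$, one has by the Sherman–Morrison formula
\begin{equation*}
(I-xB)^{-1}=(I-xA)^{-1}+\frac{x\,(I-xA)^{-1}(\cdot\,|\,w)(I-xA)^{-1}w}{1-x\bigl((I-xA)^{-1}w\,|\,w\bigr)}\ .
\end{equation*}
Now the crucial observation is the relation between $w=H(e)$ and $e$ under the resolvent of $A=H^2$: since $H$ is antilinear and selfadjoint, $\bigl((I-xA)^{-1}w\,|\,w\bigr)=\bigl((I-xA)^{-1}H(e)\,|\,H(e)\bigr)=\bigl(H(I-xA)^{-1}H(e)\,|\,e\bigr)$, and using $HA=AH$ (so $H$ commutes with $(I-xA)^{-1}$) together with $H^2=A$, this equals $\bigl(A(I-xA)^{-1}(e)\,|\,e\bigr)=\frac{1}{x}\bigl(((I-xA)^{-1}-I)(e)\,|\,e\bigr)=\frac{1}{x}\bigl(J(x)(A)-1\bigr)$. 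Therefore $1-x\bigl((I-xA)^{-1}w\,|\,w\bigr)=2-J(x)(A)$... wait — let me recompute: $1 - x\cdot\frac1x(J-1)=1-(J-1)=2-J$. Hmm, that does not look right; the correct normalization should give that the denominator appearing is $J(x)(A)$ itself, which would follow if instead $\bigl((I-xA)^{-1}w\,|\,w\bigr)$ contributes so that $1 - x(\cdots)$ telescopes to $1 - (J-1) $... I would need to recheck the antilinear bookkeeping carefully, since for antilinear $H$ one has $(Hf\,|\,g)=(Hg\,|\,f)$ and this changes signs of imaginary parts; the upshot I expect is that $2-J$ should actually read as the correct denominator after a sign correction, or that $J_{n}$ with $e$ replaced makes it work out to exactly $J(x)(A)$. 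In any case, the structural conclusion is: the generating function $J(x)(B)=((I-xB)^{-1}(e)\,|\,e)$ equals $J(x)(A)$ divided by (something like) $J(x)(A)$, i.e. the poles $1/b_j$ of $J(\cdot)(B)$ are precisely the zeros of the denominator, which are among the zeros of $J(\cdot)(A)$, and a degree/counting argument forces equality $\{$zeros of $J(x)(A)\}=\{1/b_j\}$.

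With poles $\{1/a_j\}$, zeros $\{1/b_j\}$, and $J(0)(A)=1=\bigl.\prod_j\frac{1-b_jx}{1-a_jx}\bigr|_{x=0}$ established, the ratio $J(x)(A)\big/\prod_j\frac{1-b_jx}{1-a_jx}$ is an entire function with no zeros, equal to $1$ at the origin; to conclude it is identically $1$ I would control its growth. Since $\|(I-xA)^{-1}\|\le (\mathrm{dist}(1/x,\sigma(A)))^{-1}$ stays bounded on, say, a sequence of circles $|x|=R_k\to\infty$ chosen to avoid the poles $1/a_j$ (possible because $1/a_j\to\infty$ leaves gaps), and the product has comparable lower bounds on the same circles by the interlacing property, a Liouville-type argument (or the argument principle applied on these circles, matching the number of poles and zeros inside) gives that the ratio is constant, hence $\equiv 1$. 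The main obstacle I anticipate is precisely the antilinear algebra in the Sherman–Morrison step: getting the relation $\bigl((I-xA)^{-1}H(e)\,|\,H(e)\bigr)$ expressed cleanly in terms of $J(x)(A)$ requires care with the $\C$-antilinearity of $H$ and with complex conjugates, and it is the identity $B=H^2-(\cdot\,|\,H(e))H(e)$ playing the role that \eqref{K_u^2} plays for $K_u$ that makes the denominator collapse to exactly $J(x)(A)$; once that identity is pinned down, the rest is the soft complex-analytic matching of zeros and poles which is routine.
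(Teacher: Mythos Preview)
Your approach via matching zeros, poles, and a Liouville argument is genuinely different from the paper's, but it has real gaps that leave it incomplete.

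First, a sign: since $B=A-(\cdot\,|\,w)w$, one has $I-xB=(I-xA)+x(\cdot\,|\,w)w$, so the rank-one resolvent denominator is $1+x\bigl((I-xA)^{-1}w\,|\,w\bigr)$, not $1-x(\cdots)$. With your correct computation $\bigl((I-xA)^{-1}w\,|\,w\bigr)=\frac1x(J(x)-1)$ this denominator is exactly $J(x)(A)$, not $2-J$. That part is easily repaired.

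The substantive problems are downstream. Even with the correct denominator, you have not shown that the zeros of $J(x)(A)$ are exactly the points $1/b_j$. Your suggestion that the poles of $J(\cdot)(B)$ force this does not follow from the resolvent identity alone, and in any case the poles of $J(\cdot)(B)$ are only those $1/b_j$ for which the $b_j$-eigenprojection of $e$ is nonzero. No genericity is assumed in Proposition~\ref{trace}, so this need not exhaust the $b_j$; the same objection applies to your claimed pole set $\{1/a_j\}$ for $J(x)(A)$. Finally, the Liouville step is not justified: with infinitely many poles accumulating at infinity, exhibiting circles on which both $J(x)(A)$ and the infinite product are comparably bounded above and below is a nontrivial task you have not carried out.

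The paper's proof shares your starting point (the rank-one resolvent identity with denominator $J(x)$) but bypasses all of this by taking the \emph{trace}. Since $(I-xA)^{-1}-(I-xB)^{-1}$ is rank one, its trace equals $\frac{x}{J(x)}\|(I-xA)^{-1}H(e)\|^2$; and the antilinear identity you were groping for is simply $\|(I-xA)^{-1}H(e)\|^2=\bigl(A(I-xA)^{-2}e\,\big|\,e\bigr)=J'(x)$. Equating with the spectral expression for the trace yields $\frac{J'(x)}{J(x)}=\sum_j\bigl(\frac{a_j}{1-a_jx}-\frac{b_j}{1-b_jx}\bigr)$, and integrating this logarithmic derivative from $x=0$ gives the product formula directly, first in the trace-class case and then for compact $A$ by the interlacing argument you already sketched. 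No separate zero-matching or growth estimate is needed.
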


\begin{proof} We first assume $A$ and $B$ in the trace class.  In that case, we can compute the trace of $(I-xA)^{-1}-(I-xB)^{-1}$. We first write
$$[(I-xA)^{-1}-(I-xB)^{-1}](f)=\frac x{J(x)}(f\vert (I-xA)^{-1}H(e))\cdot (I-xA)^{-1} H(e).$$
Consequently, taking the trace, we get
$${\rm Tr}[(I-xA)^{-1}-(I-xB)^{-1}]=\frac x{J(x)}\Vert (I-xA)^{-1} H(e)\Vert^2.$$
As, on the one  hand,
$$\Vert (I-xA)^{-1} H(e)\Vert^2= ((I-xA)^{-1}A(e)\vert e)=J'(x)$$
and on the other hand
\begin{eqnarray*}
{\rm Tr}[(I-xA)^{-1}-(I-xB)^{-1}]&=&x{\rm Tr}[A(I-xA)^{-1}-B(I-xB)^{-1}]\\
&=&x \sum _{j=1}^\infty\left ( \frac{a_j}{1-a_jx}-\frac{b_j}{1-b_jx}\right )\end{eqnarray*}
 we get
 \begin{equation}\label{traceformula}
  \sum _{j=1}^\infty\left ( \frac{a_j}{1-a_jx}-\frac{b_j}{1-b_jx}\right )=\frac{J'(x)}{J(x)}\ ,\ x\notin \left \{ \frac 1{a_j}, \frac 1{b_j}, j\ge 1\right \}\ .
 \end{equation}
 From this equation, one gets easily formula (\ref{ProdHmu}) for $A$ and $B$ in the trace class. To extend it to compact operators, we first recall that  $$a_j\ge b_j\ge a_{j+1}.$$ Hence, $\sum_j(a_j-b_j)$ converges  when $A$ is compact since $0\le a_j-b_j\le a_j-a_{j+1}$ and $a_j$ tends to zero by compactness of $A$.
Hence, the infinite product in Formula (\ref{ProdHmu}) converges, and the above computation makes  sense for compact operators.
\end{proof}
 
 \begin{Lemma}\label{EigenvalueLimit}
 Let  $e\in\mathcal H$ with $\Vert e\Vert=1$.
Let $(H_p)$ be a sequence of compact selfadjoint antilinear  operators on a Hilbert space $\mathcal H$ which converges strongly to $H$, namely
$$\forall h\in \mathcal H\ ,\ H_ph\td_p,\infty Hh\ .$$
We assume that $H$ is compact.
Let $A_p=H_p^2$, $B_p=A_p-(\;\cdot\;\vert H_p(e))H_p(e)$,  and  $A=H^2$ et $B=A-(\;\cdot\;\vert H(e))H(e)$ their strong limits.  
For every $j\ge 1$, denote by $\mathcal F _j$ the set of linear subspaces of $\mathcal H$ of dimension at most $j$,  set
$$a_j^{(p)}=\min _{F\in \mathcal F_{j-1}}\max _{h\in F^\perp, \Vert h\Vert =1}(A_p(h)\vert h)\ ,$$
$$b_j^{(p)}=\min _{F\in \mathcal F_{j-1}}\max _{h\in F^\perp, \Vert h\Vert =1}(B_p(h)\vert h)\ .$$

Assume there exist $(\overline a_j)$ and $(\overline b_j)$ such that
 $$\sup_{j\ge 1}|a_j^{(p)}-\overline a _j|\td_p,\infty  0\ ,\ \sup_{j\ge 1}|b_j^{(p)}-\overline b _j|\td_p,\infty 0,$$
and  the non-zero $\overline a_j$, $\overline b_m$ are pairwise distinct. Then the positive eigenvalues of $A$ are simple and are exactly the $\overline a_j$'s; similarly, the positive eigenvalues of $B=A-(\;\cdot\;\vert H(e))H(e)$ are simple and are exactly the $\overline b_m$'s.
\end{Lemma}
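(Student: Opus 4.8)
The plan is to combine a soft one-sided min--max estimate (which is all that the bare strong convergence yields) with the rigidity of the generating function of Proposition~\ref{trace}, evaluated in the limit. First, some bookkeeping: by the min--max formula~(\ref{minmax}), $a_j^{(p)}$ (resp.\ $b_j^{(p)}$) is the $j$-th largest eigenvalue, counted with multiplicity, of the compact nonnegative operator $A_p=H_p^2$ (resp.\ $B_p$). Since $\|A_p\|=a_1^{(p)}\to\overline a_1$, the family $(A_p)$ is uniformly bounded, so an easy induction gives $A_p^n\to A^n$ strongly for each $n$; in particular $A_p\to A$ and $B_p\to B$ strongly, and $A,B$ are compact. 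From the interlacing $a_j^{(p)}\ge b_j^{(p)}\ge a_{j+1}^{(p)}$ one gets $\sum_{j\ge 1}(a_j^{(p)}-b_j^{(p)})\le a_1^{(p)}$, uniformly in $p$; and from $\sup_j|a_j^{(p)}-\overline a_j|\to 0$ together with the compactness of each $A_p$ one deduces that $\overline a_j\to 0$, that the nonzero $\overline a_j$ are strictly decreasing, and the uniform tail bound: for every $\e>0$ there is $J$ with $a_j^{(p)}<\e$ for all $j\ge J$ and all $p$. Below I write $a_j,b_j$ for the eigenvalues of $A,B$, as in Proposition~\ref{trace}.

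I would then establish two things. \emph{(i) A one-sided estimate:} fixing $j$ and orthonormal eigenvectors $v_1,\dots,v_j$ of $A$ associated with $a_1,\dots,a_j$, for any subspace $F$ with $\dim F\le j-1$ one picks a unit vector $g\in F^\perp\cap\mathrm{span}(v_1,\dots,v_j)$, and then $(A_pg|g)\ge(Ag|g)-\|(A_p-A)g\|\ge a_j-\eta_p$ with $\eta_p:=\sqrt j\max_{i\le j}\|(A_p-A)v_i\|\to 0$; minimising over $F$ gives $a_j^{(p)}\ge a_j-\eta_p$, hence $\overline a_j\ge a_j$. \emph{(ii) The generating function in the limit:} strong convergence of the $A_p^n$ and the bound $|(A_p^ne|e)|\le\|A_p\|^n\le C^n$ give $J_p(x):=((I-xA_p)^{-1}e|e)\to J(x):=((I-xA)^{-1}e|e)$ uniformly on a small disc, while Proposition~\ref{trace} gives $J_p(x)=\prod_j\frac{1-b_j^{(p)}x}{1-a_j^{(p)}x}$; using $\sum_j(a_j^{(p)}-b_j^{(p)})\le C$, the convergences $a_j^{(p)}\to\overline a_j$, $b_j^{(p)}\to\overline b_j$, and the uniform tail bound (which makes the tails $\sum_{j>N}(a_j^{(p)}-b_j^{(p)})\le a_{N+1}^{(p)}$ uniformly small), one passes to the limit in the product and obtains $J(x)=\prod_{j\ge 1}\frac{1-\overline b_jx}{1-\overline a_jx}$ for small $x$, hence as meromorphic functions on $\C$.

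Now I would read off the eigenvalues. Since the nonzero $\overline a_j$ are pairwise distinct and distinct from every $\overline b_k$, the product $\prod_j\frac{1-\overline b_jx}{1-\overline a_jx}$ admits no cancellation: its poles are exactly the points $1/\overline a_j$ with $\overline a_j\ne 0$, all simple, and its zeros exactly the $1/\overline b_j$ with $\overline b_j\ne 0$, all simple. Comparing with the partial-fraction expansion $J(x)=\|P_0e\|^2+\sum_{\alpha>0}\|P_\alpha e\|^2/(1-\alpha x)$ (sum over the positive eigenvalues $\alpha$ of $A$, $P_\alpha$ the spectral projection), whose poles are exactly the $1/\alpha$ with $\alpha>0$ and $P_\alpha e\ne 0$, one sees that the nonzero $\overline a_j$ are precisely the positive eigenvalues of $A$ with $P_\alpha e\ne 0$; in particular each positive $\overline a_j$ is an eigenvalue of $A$ (and equating residues at $1/\overline a_j$ evaluates $\|P_{\overline a_j}e\|^2=(1-\overline b_j/\overline a_j)\prod_{k\ne j}\frac{\overline a_j-\overline b_k}{\overline a_j-\overline a_k}>0$, which is formula~(\ref{nulambdamu}) with $\overline a_j,\overline b_j$ in place of $\rho_j^2,\sigma_j^2$). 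With (i) this gives $a_j=\overline a_j$ for all $j$, by induction: granting $a_i=\overline a_i$ for $i<j$, if $a_j<\overline a_j$ then $\overline a_j>0$ equals some eigenvalue $a_k$ of $A$ with $k<j$ (the $a_i$ decrease and $a_j<a_k$), so $\overline a_k=\overline a_j$ with $k\ne j$ and $\overline a_j\ne 0$, contradicting distinctness. Hence the eigenvalue list of $A$, with multiplicity, is $(\overline a_j)$; the positive eigenvalues of $A$ are thus exactly the nonzero $\overline a_j$, each simple since $\overline a_j>\overline a_{j+1}$ whenever $\overline a_j>0$. For $B$, Proposition~\ref{trace} applied to the limit operator $H$ reads $J(x)=\prod_j\frac{1-b_jx}{1-\overline a_jx}$; if $\beta>0$ is an eigenvalue of $B$ of multiplicity $n\ge 1$ and $m\in\{0,1\}$ is its multiplicity for $A$, the order of $J$ at $1/\beta$ read from this product is $n-m$, while from the lowest-terms form it is $\ge -1$, being $-1$ only if $\beta$ is an $\overline a_k$ and $+1$ only if $\beta$ is a $\overline b_j$; hence $m=1$ would force $n=0$, so $m=0$, the order equals $n\ge 1$, so $n=1$ and $\beta$ is some $\overline b_j$. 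Conversely each nonzero $\overline b_j$ gives a simple zero of $J$ not cancelled by a pole, so $\overline b_j$ is an eigenvalue of $B$. Therefore the positive eigenvalues of $B$ are exactly the nonzero $\overline b_j$, all simple.

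The main obstacle, and the only place where plain strong convergence does not suffice, is the cooperation of (i) and (ii): strong convergence controls the limiting eigenvalues of $A$ only from below, and it is the rigidity of the meromorphic function $J$ — which hinges on the distinctness hypothesis — that controls the positive eigenvalues of $A$ from above (without distinctness, $J$ could have removable singularities or spurious zeros masking multiplicities). Passing to the limit in the infinite product also genuinely requires the interlacing bound $\sum_j(a_j^{(p)}-b_j^{(p)})\le a_1^{(p)}$, without which the products need not even converge.
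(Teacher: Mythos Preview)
Your proof is correct and follows essentially the same approach as the paper: both pass to the limit in the generating function via Proposition~\ref{trace} and exploit the distinctness hypothesis to rule out cancellation in the resulting infinite product $\prod_j\frac{1-\overline b_jx}{1-\overline a_jx}$. The only difference is that where the paper invokes a classical result from \cite{GG2} to obtain the inclusions $\{a_j\}\subset\{\overline a_j\}$, $\{b_j\}\subset\{\overline b_j\}$ with simplicity, you supply this step yourself via the one-sided min--max estimate (i) followed by induction for $A$, and via the order-of-vanishing comparison for $B$; this makes your argument self-contained but otherwise equivalent.
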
  
\begin{proof}
By assumption, for every $h\in \mathcal H$, we have 
\begin{equation}\label{strongHu}
A_p(h)\td _p,\infty A(h)\ .
\end{equation}
Since the norm of $A_p$ is uniformly bounded,
we conclude that (\ref{strongHu}) holds uniformly for $h$ in every compact subset of $\mathcal H$, hence 
$$\forall n\ge 1, A_p^n(h)\td _p,\infty A^n(h)\ .$$

In particular, for every $n\ge 1$,
$$J_{n}(A_p):=(A_p^n(e)\vert e)\td _p,\infty  (A^n(e)\vert e):=J_{n}(A)\ ,$$
and there exists $C>0$ such that
$$\forall n\ge 1, \;\sup _p J_{n}(A_p)\le C^n\ .$$
Choose $\delta >0$ such that $\delta C<1$. Then, for every real number $x$ such that $\vert x \vert <\delta $, we have, by dominated convergence,
$$J(x)(A_p):=1+\sum _{n=1}^\infty x^nJ_{n}(A_p)\td _p,\infty 1+\sum _{n=1}^\infty x^nJ_{n}(A):=J(x)(A)\ .$$
On the other hand, in view of the assumption about the convergence of $(a _j^{(p)})_{j\ge 1}$ and $(b _j^{(p)})_{j\ge 1}$ and the convergence of the product in Formula (\ref{ProdHmu}), we also have, for $\vert x\vert < \delta $,
\begin{equation}\label{idlimit}J(x)(A_p)=\prod _{j=1}^\infty\left ( \frac{1-b _j^{(p)}x}{1-a_j^{(p)}x}\right )\td _p,\infty   \prod _{j=1}^\infty\left ( \frac{1-\overline{b _j}x}{1-\overline{a_j}x}\right ).\ \end{equation}
Hence, we obtain 
\begin{equation}
J(x)(A)= \prod _{j=1}^\infty\left ( \frac{1-\overline{b _j}x}{1-\overline{a_j}x}\right ).\end{equation}

By assumption, the non-zero $\overline a_j$, $\overline b_m$ are pairwise distinct so no cancellation can occur in the right hand side of (\ref{ProdHmu}), and the poles are all distinct. 

On the other hand, denote by $(a_j)$  the family of eigenvalues of $A$ and by $(b_j)$ the one of $B$. 
By a classical result (see e.g. Lemma 1, section 2.2 of \cite{GG2}), 
$$\{ a_j, j\ge 1\} \subset \{ \overline a _j, j\ge 1\} \ ,\ \{ b _j, j\ge 1\} \subset \{ \overline b_j, j\ge 1\}$$
and the multiplicity of positive eigenvalues is $1$. Consequently, there is no cancellation in the expression of $J(x)(A)$ and all the poles are simple.
We conclude that $a_j=\overline a_j$, $b_j=\overline b _j$ for every $j\ge 1$.
\end{proof}
\subsection{A compactness result}
 From now on, we choose $\mathcal H=L^2_+$ and $e=1$.
As a first application of Proposition \ref{trace}, we obtain the following.
\begin{Lemma}\label{Nu_j}
For any $u\in VMO_+(\T)$, we have
$$
J(x):=J(x)(H_u^2)
=\prod_{j=1}^\infty\frac{1-x\sigma_j^2(u)}{1-x\rho_j^2(u) }=1+x\sum_{j=1}^\infty\frac{\rho_j^2(u)\nu_j^2}{1-x\rho_j^2(u)},\; x\notin\left\{\frac{1}{\rho_j^2(u)}\right\}_{j\ge 1}.
$$
 Here $\nu_j:=|(1\vert e_j)|.$ In particular, 
$$\nu_j^2=\left(1-\frac{\sigma_j^2}{\rho_j^2}\right)\prod_{k\neq j}\left(\frac{\rho_j^2-\sigma_k^2}{\rho_j^2-\rho_k^2}\right)$$
\end{Lemma}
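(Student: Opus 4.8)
The plan is to read off all three assertions from Proposition \ref{trace}. Take $\mathcal H=L^2_+$ and $e=1$. Since $u$ is a holomorphic symbol we have $H_u(1)=\Pi(u\overline 1)=\Pi(u)=u$, so the auxiliary operator appearing in Proposition \ref{trace} is $B=H_u^2-(\,\cdot\,\vert u)\,u$, which by \eqref{K_u^2} is precisely $K_u^2$. As $u\in VMO_+(\T)$, the operator $H_u$ is compact (Hartman's theorem) and selfadjoint as an antilinear operator, so Proposition \ref{trace} applies with $A=H_u^2$ and $B=K_u^2$. Labelling the nonzero eigenvalues by the min--max principle, those of $A$ are $\rho_j^2=\rho_j^2(u)$ and those of $B$ are $\sigma_j^2=\sigma_j^2(u)$, with $\rho_1^2\ge\sigma_1^2\ge\rho_2^2\ge\sigma_2^2\ge\cdots$. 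This gives the first equality
$$J(x)=J(x)(H_u^2)=\prod_{j=1}^\infty\frac{1-x\sigma_j^2}{1-x\rho_j^2}\ ,\qquad x\notin\left\{\frac{1}{\rho_j^2(u)}\right\}_{j\ge1}\ .$$

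Next I would obtain the middle expression directly from the spectral resolution of $H_u^2$. Writing $1=w+\sum_{j\ge1}(1\vert e_j)\,e_j$ with $w\in\ker H_u=\ker H_u^2$ and $(e_j)$ the orthonormal eigenfamily of the statement, one has $(I-xH_u^2)^{-1}(1)=w+\sum_j(1-x\rho_j^2)^{-1}(1\vert e_j)\,e_j$, hence, since $J(x)=((I-xH_u^2)^{-1}(1)\vert1)$,
$$J(x)=\Vert w\Vert^2+\sum_{j=1}^\infty\frac{\nu_j^2}{1-x\rho_j^2}\ ,\qquad \nu_j:=\vert(1\vert e_j)\vert\ .$$
Using $\dfrac{1}{1-x\rho_j^2}=1+\dfrac{x\rho_j^2}{1-x\rho_j^2}$ together with the Pythagorean identity $\Vert w\Vert^2+\sum_j\nu_j^2=\Vert1\Vert^2=1$, this rearranges into $J(x)=1+x\sum_j\dfrac{\rho_j^2\nu_j^2}{1-x\rho_j^2}$, the asserted second equality. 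All the series here converge for $x$ off the poles, since $\sum_j\nu_j^2\le1$ and $\rho_j\to0$.

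Finally, the closed form for $\nu_j^2$ follows by comparing the residues of the two representations of $J$ at the pole $x_0=1/\rho_j^2$. From the partial-fraction representation the residue equals $-\nu_j^2/\rho_j^2$, while from the product representation it equals
$$-\frac1{\rho_j^2}\left(1-\frac{\sigma_j^2}{\rho_j^2}\right)\prod_{k\ne j}\frac{\rho_j^2-\sigma_k^2}{\rho_j^2-\rho_k^2}\ ;$$
equating the two gives the formula. I expect no serious obstacle in this argument; the only points calling for a little care are the convergence bookkeeping just mentioned — the infinite product converges because $0\le\rho_j^2-\sigma_j^2\le\rho_j^2-\rho_{j+1}^2$ and $\rho_j\to0$, exactly as in the proof of Proposition \ref{trace} — and the remark that, even when $\rho_j^2$ is a multiple eigenvalue, the interlacing $\rho_1^2\ge\sigma_1^2\ge\rho_2^2\ge\cdots$ forces the value $x_0$ to occur exactly once more in the denominator of the product than in its numerator, so that $x_0$ is always a simple pole of $J$ and the residue computation is legitimate (in that degenerate case only the sum of the $\nu_j^2$ over a given eigenspace is intrinsically defined).
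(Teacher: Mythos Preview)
Your proof is correct and follows essentially the same approach as the paper: the product formula is read off from Proposition \ref{trace} applied with $\mathcal H=L^2_+$, $e=1$ (so that $H_u(e)=u$ and $B=K_u^2$), and the partial-fraction expression comes from expanding $1$ in the orthogonal decomposition $L^2_+=\bigoplus_j\C e_j\oplus\ker H_u$, exactly as the paper indicates. You add the explicit residue comparison for the formula for $\nu_j^2$ and a remark on the degenerate (non-simple) case, which the paper leaves to the reader; these are welcome but do not change the argument.
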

 
 The first equality is just a consequence of (\ref{ProdHmu}). For the second equality, we use the formula $J(x)=((I-xH_u^2)^{-1}(1)\vert 1)$ and we expand 
 $1$ according to the decomposition
 $$L^2_+=\oplus _{j\ge 1} \C e_j \oplus \ker H_u .$$

From Lemma \ref{EigenvalueLimit}, we infer the following compactness result, which can be interpreted as a compensated compactness result.
 \begin{proposition}\label{compactness}
 Let $(u_p)$ be a sequence of $VMO_+(\T)$ weakly convergent to $u$ in $VMO_+(\T)$. We assume that,  for some sequences
  $(\overline \rho_j)$ and $(\overline\sigma_j)$, 
 $$\sup_{j\ge 1}|\rho _j(u_p)-\overline \rho _j|\td_p,\infty  0\ ,\ \sup_{j\ge 1}|\sigma _j(u_p)-\overline \sigma _j|\td_p,\infty 0,$$
 and the following simplicity assumption:
all the non-zero $\overline \rho_j$, $\overline\sigma_m$ are pairwise distinct.
 Then, for every $j\ge 1$, $\rho _j(u)=\overline\rho _j$, $\sigma _j(u)=\overline\sigma _j$, and the convergence of $u_p$ to $u$ is strong in $VMO_+(\T)$.
 \end{proposition}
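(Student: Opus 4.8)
The plan is to combine the trace formula of Proposition \ref{trace} (in the form of Lemma \ref{Nu_j}) with the abstract spectral-convergence statement of Lemma \ref{EigenvalueLimit}, and then upgrade eigenvalue convergence to strong convergence of the symbols. First I would note that a bounded sequence $(u_p)$ in $VMO_+(\T)$ which converges weakly, say to $u$, induces a sequence of Hankel operators $(H_{u_p})$ that converges strongly (as antilinear operators) to $H_u$ on $L^2_+$: indeed $H_{u_p}(h)=\Pi(u_p\overline h)$, and against a fixed test function in $L^2_+$ the pairing $(H_{u_p}(h)\vert g)=(u_p\overline h\vert g)$ passes to the limit by weak convergence; uniform boundedness of $\Vert H_{u_p}\Vert$ (from the bound on $\Vert u_p\Vert_{BMO}$ via Nehari) gives strong convergence on all of $L^2_+$, and similarly for $K_{u_p}=H_{u_p}T_z$. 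Since $VMO_+$ embeds compactly in no obvious way we must be careful, but the compactness of the \emph{limit} operator $H_u$ is not automatic from weak convergence — so I would first need to argue that $H_u$ is itself compact. This actually follows from the hypotheses: the assumption $\sup_j|\rho_j(u_p)-\overline\rho_j|\to 0$ forces $\overline\rho_j\to 0$, and then since $H_{u_p}\to H_u$ strongly, the spectral projectors behave well enough that $H_u$ has singular values that are limits of those of $H_{u_p}$, hence tend to zero, so $H_u$ is compact; alternatively one invokes that $\overline\rho_j\to 0$ together with the min-max formula to bound $s_j(H_u)$.

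With compactness of $H_u$ (and likewise $K_u$) in hand, I would apply Lemma \ref{EigenvalueLimit} with $\mathcal H=L^2_+$, $e=1$, $H_p=H_{u_p}$, $H=H_u$, $A_p=H_{u_p}^2$, $B_p=K_{u_p}^2=A_p-(\,\cdot\,\vert u_p)u_p$ (using $H_{u_p}(1)=\Pi(u_p)=u_p$, so $(\,\cdot\,\vert H_{u_p}(1))H_{u_p}(1)=(\,\cdot\,\vert u_p)u_p$, matching the shape of $B$ in the Lemma). The eigenvalue sequences $a_j^{(p)}=\rho_j(u_p)^2$ and $b_j^{(p)}=\sigma_j(u_p)^2$ converge uniformly to $\overline\rho_j^2$ and $\overline\sigma_j^2$ respectively (uniform convergence of the $\rho_j(u_p)$ together with their uniform boundedness transfers to the squares), and the simplicity hypothesis says the nonzero $\overline\rho_j^2$, $\overline\sigma_m^2$ are pairwise distinct. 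Lemma \ref{EigenvalueLimit} then yields precisely that the positive eigenvalues of $H_u^2$ are simple and equal to the $\overline\rho_j^2$, and those of $K_u^2$ are simple and equal to the $\overline\sigma_m^2$; taking square roots gives $\rho_j(u)=\overline\rho_j$ and $\sigma_j(u)=\overline\sigma_j$ for all $j$.

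It remains to promote this to \emph{strong} convergence of $u_p$ to $u$ in $VMO_+(\T)$, and this is the step I expect to be the main obstacle, since weak convergence plus convergence of all singular values is a priori weaker than norm convergence in a non-Hilbert space. The idea is to use an energy/norm argument: one shows $\Vert u_p\Vert \to \Vert u\Vert$ in an appropriate sense and combines with weak convergence. Concretely, $\Vert u_p\Vert_{L^2}^2 = (H_{u_p}(1)\vert H_{u_p}(1))\cdot$(no, rather)$= \sum_j \rho_j(u_p)^2\nu_j(u_p)^2$ where by Lemma \ref{Nu_j} the coefficients $\nu_j(u_p)^2$ are explicit rational functions of the $\rho$'s and $\sigma$'s; uniform convergence of the spectral data and the product formula give $\nu_j(u_p)\to\nu_j(u)$ and control of the tails, hence $\Vert u_p\Vert_{L^2}\to\Vert u\Vert_{L^2}$, which with weak $L^2$ convergence gives strong $L^2$ convergence. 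To get strong convergence in $VMO_+$ one argues similarly at the level of the operators: $H_{u_p}\to H_u$ strongly plus $s_j(H_{u_p})\to s_j(H_u)$ uniformly implies $H_{u_p}\to H_u$ in operator norm (a compact operator being a uniform limit of its finite-rank truncations, with the truncation error controlled uniformly in $p$ by the tail $\sup_{j>N}\rho_j(u_p)=\sup_{j>N}\overline\rho_j+o(1)$), and by the equivalence of the $VMO_+$ norm of $u$ with $\Vert H_u\Vert$-type quantities — more precisely, one recovers $u_p=H_{u_p}(1)$ and uses the uniform operator-norm convergence together with Hartman's theorem — one concludes $u_p\to u$ in $VMO_+(\T)$. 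The delicate point throughout is the uniformity in $p$ of all tail estimates, which is exactly what the hypothesis $\sup_{j\ge 1}|\rho_j(u_p)-\overline\rho_j|\to 0$ is designed to supply.
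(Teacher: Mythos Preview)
Your plan has the right overall architecture --- apply Lemma \ref{EigenvalueLimit} to identify the singular values, then upgrade to norm convergence --- but there is a genuine gap at the very first step. You assert that weak convergence of $u_p$ to $u$ in $VMO_+$, together with the uniform bound on $\Vert H_{u_p}\Vert$, gives strong operator convergence $H_{u_p}(h)\to H_u(h)$ for every $h\in L^2_+$. Your argument only shows that $(H_{u_p}(h)\vert g)\to (H_u(h)\vert g)$ for each fixed $g$, i.e.\ convergence of $H_{u_p}$ in the weak operator topology. Uniform boundedness does \emph{not} promote weak operator convergence to strong operator convergence (think of powers of the shift). Since Lemma \ref{EigenvalueLimit} requires strong convergence of $H_p$ to $H$, your appeal to it is not justified, and the circularity propagates: the later step where you try to recover $\Vert u_p\Vert_{L^2}\to\Vert u\Vert_{L^2}$ from the $\nu_j$ formula already presupposes $\rho_j(u)=\overline\rho_j$, which is exactly what you were trying to prove.

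The paper fills this gap with the Adamyan--Arov--Krein theorem: for each $j$ one picks a rational symbol $u_{p,j}$ of rank at most $j$ with $\Vert H_{u_p}-H_{u_{p,j}}\Vert=\rho_{j+1}(u_p)$. The rank bound forces $(u_{p,j})_p$ to be bounded in $H^{1/2}$, hence precompact in $L^2_+$; combining this with the uniform smallness of $\rho_{j+1}(u_p)$ (which follows from the hypothesis $\sup_j|\rho_j(u_p)-\overline\rho_j|\to 0$ and $\overline\rho_j\to 0$) yields precompactness of $(u_p)$ itself in $L^2_+$, hence strong $L^2$ convergence $u_p\to u$. Only then does one get strong operator convergence of $H_{u_p}$ and can invoke Lemma \ref{EigenvalueLimit}. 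The final upgrade to $\Vert H_{u_p}-H_u\Vert\to 0$ also uses AAK (plus a lemma on convergence in $\mathcal V(D)$), rather than the spectral-truncation argument you sketch; your truncation idea would still need convergence of the finite-rank pieces in norm, which requires convergence of eigenvectors, a point you do not address.
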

 \begin{remark}Let us emphasize that this result specifically uses the structure of Hankel operators. It is false in general for compact operators assumed to converge only strongly. One  also has to remark that the simplicity of the eigenvalues is a crucial hypothesis 
 as the following example shows.
 Denote by $(u_p)$, $|p|<1 $, $p$ real, the sequence of functions defined by
 $$u_p(z)=\frac {z-p}{1-pz}.$$
 Then, the selfadjoint Hankel operators $H_{u_p}$ and $K_{u_p}$ have eigenvalues $\lambda_1=\mu_1=1$ and $\lambda_2=-1$ and $\mu_m=\lambda_{m+1}=0$ for $m\ge 2$ independently of $p$ . As $p$ goes to $1$, $p<1$, $u_p$ tends weakly to the constant function $-1$, hence the convergence is not strong in $VMO$. Indeed, $H_{-1}$ is the rank one operator given by $H_{-1}(h)=-(1\vert h)$ hence $H^2_{-1}$ is a rank one projector while $H_{u_p}^2$ is a rank two projector. Therefore $$\Vert H_{u_p}^2-H_{-1}^2\Vert \ge 1\  {\rm since}\  {\rm Ran}H^2_{u_p}\cap \ker H_{-1}^2\neq \{0\}\ .$$
 \end{remark} 
 
\begin{proof}
Let us first recall the Adamyan-Arov-Krein  (AAK) Theorem  on approximation of Hankel operators by finite rank Hankel operators.
\begin{theo}[Adamyan-Arov-Krein \cite{AAK}] Let $\Gamma$ be a bounded Hankel operator on $L^2_+(\T)$. Let $(s_m(\Gamma))_{m\ge 1}$ be the family of singular values of $\Gamma$ labelled according to the min-max principle.Then, for any $m\ge 1$, there exists a Hankel operator $\Gamma_m$ of rank $m-1$ such that
$$s_m(\Gamma)=\Vert \Gamma-\Gamma_m\Vert.$$
\end{theo}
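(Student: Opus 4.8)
This is classical; I sketch the standard Schmidt-pair argument. The bound $\|\Gamma-\Gamma_m\|\ge s_m(\Gamma)$ for an \emph{arbitrary} operator $\Gamma_m$ of rank at most $m-1$ is contained in the min-max formula (\ref{minmax}), so the whole point is to realize this lower bound by a \emph{Hankel} operator. If $s_m(\Gamma)=0$, then $\Gamma$ itself has rank at most $m-1$ and one takes $\Gamma_m=\Gamma$; so assume $s:=s_m(\Gamma)>0$. By Nehari's theorem, after the usual unitary identifications, we realize $\Gamma$ as the Hankel operator $\Gamma_\phi h=\Pi_-(\phi h)$, $h\in L^2_+(\T)$, where $\phi\in L^\infty(\T)$, $\Pi_-:=I-\Pi$ and $L^2_-:=L^2\ominus L^2_+$; then $\Gamma_\phi^*g=\Pi(\overline\phi g)$ for $g\in L^2_-$.

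Next I would pick a good Schmidt pair at level $m$: let $\xi\in L^2_+$ be a unit vector orthogonal to the first $m-1$ Schmidt vectors of $\Gamma_\phi$ that maximizes $\|\Gamma_\phi\xi\|$ on this subspace, so $\|\Gamma_\phi\xi\|=s$; since the top Schmidt vectors are eigenvectors of $\Gamma_\phi^*\Gamma_\phi$, the vector $\xi$ is a genuine Schmidt vector. Set $\eta:=s^{-1}\Gamma_\phi\xi\in L^2_-$, so that $\Gamma_\phi\xi=s\eta$ and $\Gamma_\phi^*\eta=s\xi$, equivalently $h:=\phi\xi-s\eta\in L^2_+$ and $g:=\overline\phi\,\eta-s\xi\in L^2_-$. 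Multiplying the first identity by $\overline\eta$, the complex conjugate of the second by $\xi$, and subtracting gives $s(|\xi|^2-|\eta|^2)=h\overline\eta-\xi\overline g$ on $\T$; the right-hand side has Fourier spectrum in $\{n\ge 1\}$ whereas the left-hand side is real-valued, so both vanish and $|\xi|=|\eta|$ a.e. on $\T$. Consequently the a priori meromorphic function $v:=\phi-s\,\eta/\xi=h/\xi$ lies in $L^\infty(\T)$, with $\|\phi-v\|_\infty=\|s\,\eta/\xi\|_\infty=s$, hence $\Gamma_\phi-\Gamma_v=\Gamma_{\phi-v}$ is a Hankel operator of norm at most $s$.

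It remains to bound the rank of $\Gamma_v$. From $v\xi=h\in L^2_+$ one gets $\Gamma_v(\xi p)=\Pi_-(hp)=0$ for every polynomial $p$, so $\ker\Gamma_v$ contains the shift-invariant subspace generated by $\xi$, namely $\Theta L^2_+$ with $\Theta$ the inner part of $\xi$; hence $\mathrm{rank}\,\Gamma_v\le\dim(L^2_+\ominus\Theta L^2_+)$. The genuinely hard point --- the classical lemma underlying the Adamyan--Arov--Krein theorem, and the one place where the Hankel structure is really used --- is that this extremal Schmidt vector has no singular inner factor and at most $m-1$ zeros in the disc, so that $\Theta$ is a finite Blaschke product of degree at most $m-1$: through the intertwining $\Gamma_\phi T_z=T_z^*\Gamma_\phi$ the inner divisors of $\xi$ are tied to the spectral subspaces of $\Gamma_\phi^*\Gamma_\phi$ above $s^2$, and since $\xi$ is orthogonal to the first $m-1$ Schmidt vectors and $s$ is characterized by the min-max principle, the total inner degree of $\xi$ cannot exceed $m-1$. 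Granting this, $\Gamma_m:=\Gamma_v$ is a Hankel operator of rank at most $m-1$ with $\|\Gamma-\Gamma_m\|\le s=s_m(\Gamma)$, which together with the min-max lower bound forces equality; the rank is exactly $m-1$ whenever $s_{m-1}(\Gamma)>s_m(\Gamma)$. The main obstacle is exactly this last step: the equal-modulus identity and the rank estimate are short, but the sharp control of the inner part of the extremal Schmidt vector, which combines the shift-intertwining of Hankel operators with the min-max description of $s$, is the substantive part of the proof.
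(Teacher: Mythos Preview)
The paper does not prove this theorem; it is quoted as a classical result of Adamyan--Arov--Krein \cite{AAK} and used as a black box inside the proof of Proposition~\ref{compactness}. So there is no ``paper's own proof'' to compare your argument against.

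Your sketch follows the standard Schmidt-pair route from the original AAK paper and its textbook treatments (e.g.\ Peller \cite{P}, Nikolskii \cite{N}): the equal-modulus identity $|\xi|=|\eta|$ on $\T$, the construction of the approximant $v=\phi-s\,\eta/\xi$, and the rank bound via the inner factor of $\xi$ are all correct in outline, and you correctly flag the genuinely hard step (bounding the degree of the inner part of the extremal Schmidt vector by $m-1$). One point you pass over a bit quickly is the \emph{existence} of a maximizing Schmidt vector when $\Gamma$ is merely bounded rather than compact: the number $s_m(\Gamma)$ defined by the min-max formula need not be an eigenvalue of $\Gamma^*\Gamma$, so the sentence ``since the top Schmidt vectors are eigenvectors of $\Gamma_\phi^*\Gamma_\phi$, the vector $\xi$ is a genuine Schmidt vector'' hides a nontrivial step that is itself part of the AAK machinery. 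In the present paper this subtlety is irrelevant, since the theorem is only applied to compact Hankel operators $H_{u_p}$, for which Schmidt pairs at every positive singular value exist trivially.
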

In other words, AAK Theorem states that the $m$-th singular value of a Hankel operator, as the distance of this operator to operators of rank $m-1$,  is exactly achieved by some Hankel operator of rank $m-1$, hence, with a rational symbol.

This result is crucial to obtain our compactness result.
We want to apply Lemma \ref{EigenvalueLimit} with $A=H_u^2$ and $B=K_u^2$ and $e=1$. One has to prove that, for any $h\in L^2_+$,
$H_{u_p}^2(h)\to H_u^2(h)$. By AAK Theorem, for any $p$ and any $j\ge 1$, there exists a function $u_{p,j}\in \mathcal V(2j)\cup {\mathcal V}(2j-1)$ so that 
$$\Vert H_{u_p}-H_{u_{p,j}}\Vert =\rho_{j+1}({u_p}).$$

In particular, we get
$$\Vert u_p-u_{p,j}\Vert_{L^2}\le \rho_{j+1}(u_p).$$
On the other hand, one has $$\Vert H_{u_{p,j}}\Vert \ge \frac{1}{\sqrt j}(Tr(H^2_{u_{p,j}}))^{1/2}\ge \frac 1{\sqrt j}\Vert u_{p,j}\Vert_{H^{1/2}}.$$ Hence, for fixed $j$, the sequence $(u_{p,j})_p$ is bounded in $H^{1/2}$.
We are going to prove that the sequence $\{u_p\}_p$ is precompact in $L^2_+$.
We show that, for any $\varepsilon>0$ there exists a finite sequence $v_k\in L^2_+$, $1\le k\le M$ so that $\{u_p\}_p\subset\cup_{k=1}^M B_{L^2_+}(v_k,\varepsilon)$.
Let $j$ be fixed so that $\sup _p\rho_{j+1}(u_p)\le \varepsilon/2$. As the sequence $(u_{p,j})_p$ is uniformly bounded in $H^{1/2}$, there is a subsequence which converges weakly in $H^{1/2}$. In particular, it is precompact in $L^2_+$ hence there exists $v_k\in L^2_+$, $1\le k\le M$ so that $\{u_{p,j}\}_p\subset\cup B_{L^2_+}(v_k,\varepsilon/2)$. Then, for every $p$ there exists a $k$ such that
$$\Vert u_p-v_k\Vert_{L^2}\le \rho_{j+1}(u_p)+\Vert u_{p,j}-v_k\Vert_{L^2}\le \varepsilon.$$
Therefore $\{ u_p \}$ is precompact in $L^2_+$ and, since $L^2$ is complete, some subsequence of $(u_p)$  has a strong limit in $L^2_+$. Since $u_p$ converges weakly to $u$, this limit has to be $u$, and
we conclude that the whole sequence $(u_p)$ is strongly convergent  to $u$ in $L^2_+$. Since $\Vert H_{u_p}\Vert \simeq \Vert u_p\Vert _{BMO}$ is bounded, we infer the strong convergence of  operators,
$$\forall h\in L^2_+, H_{u_p}(h)\td_p,\infty H_u(h)\ .$$
By Lemma \ref{EigenvalueLimit}, for every $k$ we have $\rho _k(u)=\overline \rho _k$ and $\sigma _k(u)=\overline \sigma _k$. We now want to prove that
$$\Vert H_{u_p}-H_u\Vert \rightarrow 0\ .$$
Let us distinguish two cases.
\s
{\it First case} : for every $j\ge 1$, $\overline \rho_j>0\ .$ We come back to the AAK situation above. For every $j$, we select $u_{p,j}\in \mathcal V(2j)\cup {\mathcal V}(2j-1)$ so that
$$\Vert H_{u_p}-H_{u_{p,j}}\Vert =\rho_{j+1}({u_p}).$$
Since the operator norm is lower semicontinuous for the strong convergence, we infer that any limit point $\tilde u_j$ of $u_{p,j}$  in $L^2_+$ as $p\rightarrow \infty $, satisfies
$$\Vert H_u-H_{\tilde u_j}\Vert \leq \overline \rho _{j+1}\ .$$
In particular, $\vert \overline \sigma _j-\sigma _j(\tilde u_j)\vert \le \overline \rho _{j+1}$, hence $\sigma _j(\tilde u_j)>0$ and thus $\tilde u_j\in \mathcal V(2j)$.
Using the following elementary lemma, the proof is then completed by the triangle inequality.
\begin{Lemma}\label{V(D)}
Let $N$ be a positive integer and  $w_p\in \mathcal V(2N)\cup \mathcal V(2N-1)$ such that $w_p\td_p,\infty w$ in $L^2_+$. Assume $w\in \mathcal V(2N)\cup \mathcal V(2N-1)$. Then $\Vert H_{w_p}-H_w\Vert \td_p,\infty 0.$
\end{Lemma}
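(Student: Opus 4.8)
The plan is to reduce operator-norm convergence of the Hankel operators to uniform convergence of the symbols on $\overline{\mathbb D}$, and then to exploit the rigidity of rational functions of bounded degree. First I would record the elementary inequality $\|H_v\|\le\|v\|_{L^\infty(\T)}$ for $v\in L^2_+\cap L^\infty(\T)$, immediate from $H_v(h)=\Pi(v\overline h)$ and $\|\Pi\|=1$ on $L^2$; since $v\mapsto H_v$ is linear, $\|H_{w_p}-H_w\|=\|H_{w_p-w}\|\le\|w_p-w\|_{L^\infty(\T)}$, so it suffices to prove that $w_p\to w$ uniformly on $\overline{\mathbb D}$. Next I would invoke Kronecker's theorem in the form used in \cite{GG}: since $K_u^2=H_u^2-(\cdot\vert u)u$ is a rank-one perturbation of $H_u^2$, the ranks of $H_u$ and $K_u$ differ by at most one, so $\mathcal V(2N)\cup\mathcal V(2N-1)$ is exactly the set $\{u\in L^2_+:{\rm rank}\,H_u=N\}$, and each such $u$ is uniquely written $u=P_u/Q_u$ with $P_u,Q_u$ coprime polynomials, $Q_u(0)=1$, $Q_u$ non-vanishing on $\overline{\mathbb D}$, $\deg P_u\le N-1$, $\deg Q_u\le N$ and $\max(1+\deg P_u,\deg Q_u)=N$.

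The core of the argument is a normal-families estimate on these polynomials. Writing $Q_{w_p}(z)=\prod_i(1-z/\zeta^{(p)}_i)$ with $|\zeta^{(p)}_i|>1$ and at most $N$ factors, its coefficients are elementary symmetric functions of numbers of modulus $<1$, hence bounded by $2^N$; and $P_{w_p}=w_pQ_{w_p}$ satisfies $\|P_{w_p}\|_{L^2(\T)}\le\|w_p\|_{L^2(\T)}\|Q_{w_p}\|_{L^\infty(\T)}$, which is bounded since $w_p\to w$ in $L^2$. Thus $(P_{w_p})$ and $(Q_{w_p})$ remain in bounded subsets of the finite-dimensional spaces of polynomials of degrees $\le N-1$ and $\le N$, and along any subsequence I may extract coefficientwise limits $\widetilde P,\widetilde Q$ with $\widetilde Q(0)=1$. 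I then claim $(\widetilde P,\widetilde Q)=(P_w,Q_w)$: after a further extraction $w_p\to w$ a.e.\ on $\T$, so $P_{w_p}=w_pQ_{w_p}\to w\widetilde Q$ a.e.\ on $\T$, while $P_{w_p}\to\widetilde P$ uniformly; hence $w\widetilde Q=\widetilde P$ a.e.\ on $\T$, and since both sides belong to $L^2_+$ their holomorphic extensions agree on the disc. Multiplying this identity by $Q_w$ and using $wQ_w=P_w$ on the disc yields the polynomial identity $\widetilde PQ_w=P_w\widetilde Q$; as $\gcd(P_w,Q_w)=1$ we get $Q_w\mid\widetilde Q$, say $\widetilde Q=Q_wR$, $\widetilde P=P_wR$ with $R(0)=1$. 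The degree bounds then give $\deg R\le\min(N-\deg Q_w,\,N-1-\deg P_w)=N-\max(\deg Q_w,1+\deg P_w)=0$, so $R=1$ and $\widetilde Q=Q_w$, $\widetilde P=P_w$.

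Since every subsequence of $(P_{w_p},Q_{w_p})$ has a further subsequence converging to $(P_w,Q_w)$, the whole sequence converges to $(P_w,Q_w)$. Finally $Q_w$ is bounded below on the compact $\overline{\mathbb D}$ by some $\delta_0>0$, so $|Q_{w_p}|\ge\delta_0/2$ there for $p$ large, whence $w_p=P_{w_p}/Q_{w_p}\to P_w/Q_w=w$ uniformly on $\overline{\mathbb D}$, in particular on $\T$; combined with the first step this yields $\|H_{w_p}-H_w\|\le\|w_p-w\|_{L^\infty(\T)}\to0$.

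I expect the main obstacle to be the identification of the subsequential limit $(\widetilde P,\widetilde Q)$. A priori a pole of $w_p$ could approach $\T$ --- which would make $\|H_{w_p}\|$ blow up --- or the degree of the denominator could drop in the limit. Both possibilities are ruled out by combining the $L^2$-boundedness of $(w_p)$, which keeps $P_{w_p}=w_pQ_{w_p}$ bounded and forces $w\widetilde Q=\widetilde P$, with the hypothesis $w\in\mathcal V(2N)\cup\mathcal V(2N-1)$, whose degree identity $\max(1+\deg P_w,\deg Q_w)=N$ collapses the divisibility $Q_w\mid\widetilde Q$ to $\widetilde Q=Q_w$, a polynomial with no zeros on $\overline{\mathbb D}$; so the conclusion genuinely uses that the limit $w$, too, lies in the stratum.
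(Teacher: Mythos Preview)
Your proof is correct and follows essentially the same route as the paper's: represent each $w_p$ as $P_{w_p}/Q_{w_p}$ with bounded polynomial data, extract subsequential limits $(\widetilde P,\widetilde Q)$ by finite-dimensionality, and use a divisibility-plus-degree argument (exploiting $\max(1+\deg P_w,\deg Q_w)=N$) to force $(\widetilde P,\widetilde Q)=(P_w,Q_w)$, so the poles stay away from $\T$ and the convergence is uniform on $\overline{\mathbb D}$. The only cosmetic differences are that the paper identifies the limit via pointwise convergence in the open disc (Cauchy formula) rather than a.e.\ convergence on $\T$, and concludes operator-norm convergence via the Hilbert--Schmidt norm on $H^{1/2}_+$ rather than your more direct bound $\|H_v\|\le\|v\|_{L^\infty(\T)}$.
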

Let us postpone the proof of Lemma \ref{V(D)} to the end of the argument.\\
\noindent {\it Second case} : there exists $k\ge 1$ such that $\overline \rho _k$=0. We denote by $j$ the greatest $k\ge 1$ such that $\overline \rho _k>0$. Of course we may assume that there exists  such a $j$, otherwise this would mean that $\Vert H_{u_p}\Vert $ tends to $0$, a trivial case. For such a $j$, we  again write
$$\Vert H_{u_p}-H_{u_{p,j}}\Vert =\rho_{j+1}({u_p}),$$
and, passing to the limit, we conclude that $u_{p,j}$ is strongly convergent to $u$ in $L^2_+$. Using again Lemma \ref{V(D)}, we conclude that $\Vert H_{u_{p,j}} -H_u\Vert $ tends to $0$, and the proof is again completed by the triangle inequality.
\s
Finally, let us prove Lemma \ref{V(D)}. Recall the explicit description of $\mathcal V(D)$, see e.g. \cite{GG}. Elements of $\mathcal V(2N)$ are rational functions of the following form,
$$w(z)=\frac{A(z)}{B(z)}\ ,$$
where  $A$, $B$ have no common factors, $B$ has no zeroes in the closed unit disc, $B(0)=1$, and ${\rm deg}(A)\leq N-1$, ${\rm deg}(B)=N$. Elements of $\mathcal V(2N-1)$ have the same form,
except that the last part is replaced by ${\rm deg}(A)= N-1$, ${\rm deg}(B)\le N-1$.
\s
Write similarly
$$w_p(z)=\frac{A_p(z)}{B_p(z)}\ .$$
By the Cauchy formula, we have, for every $z$ in the unit disc,
$$w_p(z)\rightarrow w(z)\ .$$
Since
$$B_p(z)=\prod _{k=1}^N(1-b_{k,p}z)$$
with $\vert b_{k,p}\vert <1$,we may assume that, up to extracting a subsequence,
$$B_p(z)\rightarrow \tilde B(z)=\prod _{k=1}^N(1-\tilde b_kz)\ ,$$
with $\vert \tilde b_k\vert \le 1$. Multiplying by $B_p(z)$ and passing to the limit, we get
$$A_p(z)\rightarrow \tilde B(z)\frac{A(z)}{B(z)}:= \tilde A(z)\ .$$
 Since $\tilde B A$ is divisible by $B$, $\tilde B$ is divisible by $B$. On the other hand, we claim that ${\rm deg}(\tilde B)\le {\rm deg}(B)$. Indeed, either $w\in \mathcal V(2N)$, and
 ${\rm deg}(B)=N\ge {\rm deg}(\tilde B)$ ; or $w\in \mathcal V(2N-1)$, and ${\rm deg}(A)=N-1\ge {\rm deg}(\tilde A)$. In both cases, we conclude $\tilde B=B$, which means that
 the numbers $b_{k,p}$ stay  away from the unit circle. Consequently, the convergence of $w_p(z)$ to $w(z)$ holds uniformly on a disc $D(0,r)$ for some $r>1$, thus, say, $w_p\rightarrow w$
 in $H^s(\T )$ for every $s>0$. Choosing $s=\frac 12$, we conclude that $H_{w_p}$ converges to $H_w$ in the Hilbert-Schmidt norm, hence in the operator norm.
\end{proof}

\section{Proof of Theorem \ref{TheoHomeo} and of the first part of Theorem \ref{HankelautoE}}

\subsection{The surjectivity of $\chi $.} 
Let $(\zeta_p)_{p\ge 1}$ be an element in $\Xi$. We want to prove the existence of $u\in VMO_{+,{\rm gen}}$ so that $\chi (u)=(\zeta_{p})_{p\ge 1}$. We are going to use the finite rank result.
By Theorem \ref{TheoDiffeoMN}, for every $N$ we construct $u_N\in\mathcal V(2N)$ via the diffeomorphism $\chi_N$ by letting
$$\chi _N(u_N)=(\zeta _p)_{1\le p\le 2N}\ .$$
The sequence $(u_N)$ satisfies $\Vert H_{u_N}\Vert =\rho _1(u_N)=\vert \zeta _1\vert $, hence is bounded in $VMO$, and therefore has a subsequence, still denoted by $(u_N)$,  which is weakly convergent to $u$ in $VMO_+$. 
We can then apply  Proposition \ref{compactness}, hence $u$ is  the strong limit of $(u_N)$ in $VMO_+(\T )$, so that
$$\rho _j(u)=\vert \zeta _{2j-1}\vert :=\rho _j\ ,\ \sigma _j(u)=\vert \zeta _{2j}\vert :=\sigma _j .$$
In particular, $u\in VMO_{+,{\rm gen}}$. It remains to consider the convergence of the angles and hence of the eigenvectors. Let $j$ be fixed. For $N>j$, denote by $e_{j,N}$ the normalized eigenvector of $H_{u_{N}}^2$ related to the simple eigenvalue $\rho_j^2$ so that $H_{u_{N}}(e_{j,N})=\rho  _j\vert e_{j,N}$. As $(e_{j,N})$ is a sequence of unitary vectors, it has a weakly convergent subsequence to some vector $\tilde e_j$. We now show that the convergence is in fact  strong.
 Let us consider the operator 
$$P_{j,N}=\int_{\mathcal C_j}(zI-H_{u_N}^2)^{-1} \frac{dz}{2i\pi}$$ where $\mathcal C_j$ is a small circle around $\rho_j^2$. If $\mathcal C_j$ is  sufficiently small 
$$P_{j,N}(h)=(h\vert e_{j,N})e_{j,N}.$$ By the convergence of $H_{u_N}$ to $H_u$, we have for any $h\in L^2$,
$$P_{j,N}(h)\to P_j(h)$$ where $P_j$ is the projector onto the eigenspace of $H_u^2$ corresponding to $\rho_j^2$. Denoting by $e_j$ a unitary vector of this eigenline, we get that, for any $h\in L^2$,
$$(h\vert e_{j,N})e_{j,N}\to (h\vert e_{j})e_j.$$

As $(h\vert e_{j,N})$ converges to $(h\vert \tilde e_j)$ by weak convergence, and on the other hand $\vert P_{j,N}(e_{j,N})\vert =|(h\vert e_{j,N})|$ tends to $\Vert P_j(h)\Vert =|(h\vert e_{j})|$, we get that $|(h\vert \tilde e_j)|=|(h\vert e_{j})|$ for any $h$ in $L^2$, hence $\tilde e_j=e^{i\Psi}e_j$ is unitary. We conclude that the convergence of $e_{j,N}$ to $\tilde e_j$ is strong since the convergence is weak and the vectors are unitary.
Hence $H_{u_N}(e_{j,N})=\rho_{j,N} e_{j,N}$ converges to $H_u(\tilde e_j)=\rho_j\tilde e_j$, and the angles ${\rm arg}(1\vert e_{j,N})^2$ converge to ${\rm arg} (1\vert \tilde e_j)^2$. The same holds for the eigenvectors of $K_{u_N}$. We conclude that there exists $u\in VMO_{+,\rm gen}$ with $\chi(u)=(\zeta_p)_{p\ge 1}$. The mapping $\chi$ is onto. 

The second step is  to prove that $\chi $ is one-to-one. It comes from an explicit formula giving $u$ in terms of $\chi(u)$.
 
\s

\subsection{An explicit formula via the compressed shift operator}
We are going to use the well known link between the shift operator and the Hankel operators. Namely, if $T_z$ denotes the shift operator, one can easily check the following identity,
\begin{equation}\label{Shift}H_uT_z=T_z^*H_u.\end{equation}
With the notation introduced in the introduction, it reads
$$K_u=T_z^*H_u.$$
Moreover,
$$K_u^2=H_uT_zT_z^*H_u=H_u(I-(\, .\,  \vert 1))H_u=H_u^2-(\, .\, \vert u)u\ .$$
We introduce the compressed shift operator  (\cite{N}, \cite{N2}, \cite{P})
$$S:=P_uT_z\ ,$$
 where $P_u$ denotes the orthogonal projector onto the closure of the range of $H_u$. 
By property (\ref{Shift}), $\ker H_u=\ker P_u$ is stable by $T_z$, hence 
 $$S=P_uT_zP_u$$ so that $S$ is an operator from the closure of the range of $H_u$ into itself. In the sequel, we shall always denote by $S$ the induced operator
 on the closure of the range of $H_u$, and  by $S^*$ the adjoint of this operator. 
 \s
Now observe that  operator $S$ arises in the Fourier series decomposition of $u$, namely
$$
u(z)=\sum _{n=0}^\infty \hat u(n)z^n,
$$
where
\begin{equation}\label{FourierU}
\hat u(n)=(u|z^n)=(u|T^n_z(1))=(u|S^nP_u(1))\ .
\end{equation}
As a consequence, we have, for $|z|<1$,
\begin{equation}\label{InverseSpectral}
u(z)=(u|(I-\overline zS)^{-1}P_u(1)).
\end{equation}
which makes sense since $\Vert S\Vert \le 1$. 
By studying the spectral properties of $K_u^2$, one obtains the following lemma.
\begin{Lemma}\label{spectralS}
The sequence $(g_j)_{j\ge 1}$ defined by $g_j=(H_u^2-\sigma_jI)^{-1}(u)$ is an orthogonal basis of the range of $K_u$,
on which the compressed shift operator acts as
$$S(g_j)={\sigma _j}\, {\rm e}^{i\theta _j}\, h_j\ ,\ h_j:=(H_u^2-\sigma^2 _jI)^{-1}P_u(1)\ .$$
\end{Lemma}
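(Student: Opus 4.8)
The plan is to decouple the two operators: I would first prove the purely $K_u$-side identity $K_u(g_j)=\sigma_j{\rm e}^{-i\theta_j}g_j$, and then recover the formula for $S(g_j)$ by applying $H_u$ and invoking that $H_u$ is injective on $\overline{{\rm Ran}\,H_u}$. Throughout I interpret $g_j=(H_u^2-\sigma_j^2I)^{-1}(u)$ (so $\sigma_j^2$, consistently with $h_j$).

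For the preliminaries: since $u\in VMO_{+,\rm gen}$, the strict interlacing $\rho_1^2>\sigma_1^2>\rho_2^2>\dots$ shows that $\sigma_j^2>0$ lies at positive distance from the spectrum $\{\rho_k^2:k\ge1\}\cup\{0\}$ of $H_u^2$; hence $R_j:=(H_u^2-\sigma_j^2I)^{-1}$ is a bounded selfadjoint operator on $L^2_+$, and being a real function of $H_u^2$ it commutes with the antilinear operator $H_u$ (which commutes with $H_u^2$, hence with every real Borel function of it). Thus $g_j=R_j(u)$ and $h_j=R_jP_u(1)$ are well defined, $h_j\in\overline{{\rm Ran}\,H_u}=(\ker H_u)^\perp$, and, writing $1=P_u(1)+q$ with $q\in\ker H_u=\ker H_u^2$ (on which $R_j$ acts as $-\sigma_j^{-2}I$) and using $u=H_u(1)$,
$$g_j=R_jH_u(1)=H_u\bigl(R_j(1)\bigr)=H_u(h_j),$$
since $H_u(R_jq)=H_u(-\sigma_j^{-2}q)=0$. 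This identity $g_j=H_u(h_j)$ is the bridge between $H_u$ and $S$.

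Next I would identify the $g_j$. From $K_u^2=H_u^2-(\,\cdot\,|u)u$, the equation $K_u^2g=\sigma_j^2g$ is equivalent to $(H_u^2-\sigma_j^2I)g=(g|u)u$, i.e. to $g=(g|u)g_j$; so the $\sigma_j^2$-eigenspace of $K_u^2$ is contained in $\C g_j$, and since $\sigma_j^2$ is a (simple, by genericity) positive eigenvalue of $K_u^2$ it equals $\C g_j$. In particular $g_j\neq0$, and feeding $g=g_j$ back into the equivalence gives $(g_j|u)=1$. As eigenvectors of the selfadjoint operator $K_u^2$ for the pairwise distinct eigenvalues $\sigma_j^2$, the $g_j$ are pairwise orthogonal and their closed span is $\overline{{\rm Ran}\,K_u}$, which yields the orthogonal basis assertion. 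Moreover $K_u$ maps $\C g_j$ into itself (it commutes with $K_u^2$), so $K_u(g_j)=\lambda_jg_j$ with $|\lambda_j|^2g_j=K_u(K_u g_j)=K_u^2(g_j)=\sigma_j^2g_j$, hence $|\lambda_j|=\sigma_j$. To fix the phase, let $f_j=tg_j$, $t\in\C^*$, be the unit vector defining $\theta_j$ ($K_u f_j=\sigma_j f_j$): comparing $K_u(f_j)=\bar t\lambda_jg_j$ with $\sigma_j f_j=\sigma_j tg_j$ gives $\lambda_j=\sigma_j t/\bar t$, while $(u|f_j)=\bar t(u|g_j)=\bar t$ gives ${\rm e}^{i\theta_j}=(u|f_j)^2/|(u|f_j)|^2=\bar t/t$. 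Therefore $\lambda_j=\sigma_j\overline{{\rm e}^{i\theta_j}}=\sigma_j{\rm e}^{-i\theta_j}$, that is, $K_u(g_j)=\sigma_j{\rm e}^{-i\theta_j}g_j$.

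Finally I would transfer this to $S$. Since $g_j=H_u(h_j)\in{\rm Ran}\,H_u$ we have $P_ug_j=g_j$, so $S(g_j)=P_uT_zg_j$; applying $H_u$ and using $H_uP_u=H_u$ and $H_uT_z=K_u$,
$$H_u\bigl(S(g_j)\bigr)=K_u(g_j)=\sigma_j{\rm e}^{-i\theta_j}g_j=\sigma_j{\rm e}^{-i\theta_j}H_u(h_j)=H_u\bigl(\sigma_j{\rm e}^{i\theta_j}h_j\bigr),$$
the last step by antilinearity of $H_u$. Both $S(g_j)$ and $\sigma_j{\rm e}^{i\theta_j}h_j$ lie in $\overline{{\rm Ran}\,H_u}$, on which $H_u$ is injective (its kernel there is $\ker H_u\cap(\ker H_u)^\perp=\{0\}$), so $S(g_j)=\sigma_j{\rm e}^{i\theta_j}h_j$, as claimed. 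The step I expect to need the most care is the phase bookkeeping: one must track the antilinearity of $H_u$ and $K_u$ through every scalar factor and use that $(u|g_j)$ is a nonzero real — in fact equal to $1$ — so that it contributes nothing to $\arg(u|f_j)^2$; everything else is routine manipulation of the spectral resolution of $H_u^2$ together with $K_u=H_uT_z=T_z^*H_u$.
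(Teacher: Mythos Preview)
Your proof is correct. The paper does not actually prove this lemma---it merely states it, preceded by the remark ``By studying the spectral properties of $K_u^2$, one obtains the following lemma''---so there is no detailed argument to compare against. Your approach is precisely the one the paper gestures at: you identify the $g_j$ as the (one-dimensional) eigenspaces of $K_u^2$ via the resolvent equation $(H_u^2-\sigma_j^2I)g=(g|u)u$, deduce orthogonality and the span from the spectral theorem for the compact selfadjoint operator $K_u^2$, fix the phase of $K_u(g_j)$ by comparing with the normalized eigenvector $f_j$ (using that $(g_j|u)=1$ is real), and finally pass to $S$ through the identity $g_j=H_u(h_j)$ together with the injectivity of $H_u$ on $\overline{{\rm Ran}\,H_u}$. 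The antilinearity bookkeeping, which you flag as the delicate point, is handled correctly throughout.
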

To obtain an explicit formula from Formula (\ref{InverseSpectral}), it is sufficient to express the action of $S$ on a basis of the closure of the range of $H_u$.

Hence, when the closure of the range of $H_u$ and the closure of the range of $K_u$ coincide, one can conclude from this Lemma, Lemma \ref{Nu_j} and Equation (\ref{InverseSpectral}) and obtain the explicit formula writing everything in the basis $(\tilde e_j)_{j\ge 1}$ of $\overline R$, where 
\begin{equation}\label{basetilde}
\tilde e_j:={\rm e}^{i\varphi _j/2}\, e_j\ .
\end{equation}
If the range of $K_u$ is strictly included in the range of $H_u$, there exists $g$ in the range of $H_u$ so that $K_ug=0=T^*_zH_ug$ hence $H_ug$ is a non-zero constant, in particular $1$ belongs to the range of $H_u$. Let us write $1=H_ug_0$. In this case, an orthogonal basis of the closure of the range of $H_u$ is given by the sequence $(g_m)_{m\ge 0}$ and, as $K_u(g_0)=0=H_uS(g_0)$, $S(g_0)=0$. 
So we obtain the same explicit formula for $u$ in terms of $\chi(u)$. This proves that the mapping $\chi$ is one-to-one.
\s
To prove that $\chi$ is a homeomorphism, it remains to prove that $\chi^{-1}$ is continuous on $\Xi$. One has to prove that if $\chi(u_p)$ tends to $\chi(u)$ then $(u_p)$ tends to $u$ in $VMO$. It is straightforward from Proposition \ref{compactness} that $(u_p)$ has a subsequence which converges strongly to $v$ in $VMO$. As $\chi$ is continuous and one-to-one, we get $v=u$. 

\subsection{The case of real Fourier coefficients}
Finally, let us  infer the first part of Theorem \ref{HankelautoE} from  Theorem \ref{TheoHomeo}. Firstly, we claim that the elements of $VMO_{+,{\rm gen}}$ with real Fourier coefficients correspond via the map $\chi $ to 
elements $\zeta \in \Xi $ which are real valued. Indeed, if $\zeta $ is real valued, the explicit formula (\ref{c_nDiffeo}) clearly implies that $\hat u(n)$ is real for every $n$. Conversely, if $u\in VMO_{+,{\rm gen}}$
has real Fourier coefficients, then $H_u$ and $K_u$ are compact selfadjoint operators on the closed real subspace of $L^2_+$ consisting of functions with real Fourier coefficients. Consequently, they admit orthonormal bases of eigenvectors in this space. Therefore we can write 
$$H_u(\tilde e_j)=\lambda _j\tilde e_j\ ,\ \lambda _j=\pm \rho _j\ ,\ K_u(\tilde f_m)=\mu _m\tilde f_m\ ,\mu _m=\pm \sigma _m\ ,$$
where $\tilde e_j$ and $\tilde f_m$ are unitary vectors with real Fourier coefficients. Since $\rho _j^2$ and $\sigma _m^2$ are simple eigenvalues of $H_u^2$ and $K_u^2$ respectively, we conclude that
$\tilde e_j$ is collinear to $e_j$, and similarly $\tilde f_m$ is collinear to $f_m$. More precisely, since $H_u$ and $K_u$ are antilinear,
\begin{eqnarray*}
\tilde e_j= \begin{cases}  \pm e_j\ {\rm if}\ \lambda _j=\rho _j\\
\pm ie_j\ {\rm if}\ \lambda _j=-\rho _j\ 
\end{cases}\ \ ;\ \ \tilde f_m=\begin{cases}  \pm f_m\ {\rm if}\ \mu _m=\sigma  _m\\
\pm if_m\ {\rm if}\ \mu _m=-\sigma _m\ 
\end{cases}\ 
\end{eqnarray*}
Since $(1\vert \tilde e_j)$ and $(u\vert \tilde f_m)$ are real, we conclude that
\begin{eqnarray*}
\varphi _j= \begin{cases}  0\ {\rm if}\ \lambda _j=\rho _j\\
\pi \ {\rm if}\ \lambda _j=-\rho _j\ 
\end{cases}\ \ ;\ \ \theta_m=\begin{cases}  0\ {\rm if}\ \mu _m=\sigma  _m\\
\pi \ {\rm if}\ \mu _m=-\sigma _m\ 
\end{cases}\ 
\end{eqnarray*}
Therefore, $\zeta _{2j-1}=\lambda _j$ and $\zeta _{2j}=\mu _j$. This completes the proof.

\section {Proof of Theorem \ref{KerHu}}
\begin{proof}
We already observed that $\ker H_u\subset \ker K_u$ and that the inclusion is strict if and only if $1\in R$ and in that case, $\ker K_u=\ker H_u\cup \C H_u^{-1}(1)$. Hence, in the following, we focus on the kernel of $H_u$.

We first prove that $\ker H_u=\{0\}$ if and only if $1\in \overline R\setminus R$.

As $\ker H_u=\{0\}$ is equivalent to $\overline R=L^2_+$, $\ker H_u=\{0\}$ implies $1\in\overline R$. If $1\in R$, then there exists $w\in \overline R$ so that $1=H_u(w)$. If we introduce the function $\psi=zw$, then $H_u(\psi)=T_z^*H_u(w)=T_z^*(1)=0$. It implies  that $\psi$ belongs to $\ker H_u$ and $\psi\neq 0$. Hence, $\ker H_u=\{0\}$ implies $1\in\overline R\setminus R$.

Let us prove the converse. Assume that $\ker H_u\neq\{0\}$ and that $1\in\overline R$. Let us show that $1\in R$. By the Beurling Theorem, we have $\ker H_u=\varphi L^2_+$ for some inner function $\varphi $. As   $1$ belongs to  $\overline R$, it is orthogonal to $\ker H_u$ hence $(1\vert\varphi)=0$. It implies that $\varphi=zw$ for some $w$ and, as $H_u(\varphi)=0=T_z^*H_u(w)$, we get that $H_u(w)$ is a non zero constant (if $H_u(w)=0$, $w$ should be divisible by $\varphi$ which is impossible since $\varphi=zw$). Eventually, we get that the constants are in $R$ and so is $1$. Hence we proved that  $\ker H_u\neq\{0\}$ if and only if either $1$ belongs to $R$ or $1$ does not belong to $\overline R$.

It remains to prove that the property $1\in\overline R\setminus R$ is equivalent to equations (\ref{HuOneToOne}) that we recall here,
$$
\sum_{j=1}^\infty \left( 1-\frac{\sigma_j^2}{\rho_j^2}\right)=\infty,\; \;\sup_N\frac 1{\rho_{N+1}^2}\prod_{j=1}^N\frac{\sigma_j^2}{\rho_j^2}=\infty.$$
Firstly, $1\in\overline R$ if and only if $\sum_{j=1}^\infty \nu_j^2=1$ which, in turn, letting $x$ tend to $\infty$ in formula giving $J(x)$ in Lemma \ref{Nu_j}, is equivalent to $$\prod_{j=1}^\infty \frac{\sigma_j^2}{\rho_j^2}=0.$$ It gives the first condition. We claim that $1$ belongs to $R$ if and only if  $$\sum_{j=1}^\infty \frac{\nu_j^2}{\rho_j^2}<\infty.$$ Indeed, it is a necessary and sufficient condition to be able to define $$w=\sum_{j=1}^\infty \frac{\nu_j}{\rho_j}e^{-i\varphi_j/2} e_j$$ so that $H_u(w)=1$. We now show that this condition is equivalent to $$\sup_{N}\frac 1{\rho_{N+1}^2}\prod_{j=1}^N\frac{\sigma_j^2}{\rho_j^2}<\infty.$$ Let us denote by $p_N$ the quantity $$p_N:=\frac 1{\rho_{N+1}^2}\prod_{j=1}^N\frac{\sigma_j^2}{\rho_j^2}$$ and let us show that $\sup_N p_N<\infty$.
Indeed, the sequence $(p_N)$ is increasing and \begin{equation}\label{limF(y)}\sum_{j=1}^\infty \frac{\nu_j^2}{\rho_j^2}=-\lim_{x\to\infty}xJ(x)=\lim_{y\to\infty} F(y)\ ,\ F(y):=y\prod_{j=1}^\infty\frac{1+y\sigma_j^2}{1+y\rho_j^2}\ .\end{equation} (here we used Lemma \ref{Nu_j} and the equality $\sum_{j=1}^\infty\nu_j^2=1$ so that $J(x)=\sum_{j=1}^\infty\frac{\nu_j^2}{1-x\rho_j^2(u)}$). 
Let us define $$F_N(y)=\frac y{1+y\rho _{N+1}^2}\prod _{j=1}^N\frac{1+y\sigma _j^2}{1+y\rho _j^2}.$$Then, this quantity 
is increasing with respect to $N$ and to $y$ hence $$\sup_N p_N=\sup_N\sup_y F_N(y)=\sup_y\sup_N F_N(y)=\sup_y F(y)<\infty.$$

Now, we prove the formulae (\ref{phi}) and (\ref{tildephi}) which give the generators of the kernels. We first consider the case when $1\notin \overline R$. As $1-P_u(1)$ belongs to $\ker H_u$, $1-P_u(1)=\varphi f$ for some $f\in L^2_+$. Let us remark that for any $h\in \ker H_u$, $\overline {1-P_u(1)} h$ is holomorphic. Indeed, for any $k\ge 1$, one has
$$(\overline {1-P_u(1)}h\vert \overline z^k)=(z^kh\vert 1-P_u(1))=0-(z^kh\vert P_u(1))=0,$$ 
the last equality coming from the fact that $z^kh\in\ker H_u$.
Since the modulus of $\varphi $ is $1$, it implies that $\overline f$ is holomorphic  hence it is a constant.  We get that $\varphi=\frac{1-P_u(1)}{\Vert 1-P_u(1)\Vert}$.
One can write, as for formula (\ref{FourierU}), $$1-P_u(1)=1-\sum (P_u(1)\vert S^n P_u(1))z^n$$ and the explicit formula is obtained by writing this equality in the orthogonal basis $(\tilde e_j)$ defined by (\ref{basetilde}).

It remains to consider the case $1\in R$. Then, one can choose $w\in \overline R$ so that $H_u(w)=1$. In particular, $H_u(zw)=T^*_zH_u(w)=0$ so that $zw=\varphi f$ for some $f$ in $L^2_+$. As before, one can prove that, for any $h\in\ker H_u$, $\overline {zw}h$ is holomorphic hence $\overline f$ is holomorphic hence is constant. Eventually, in this case, we obtain $\varphi=z\frac w{\Vert w\Vert}$. The explicit formula follows from direct computation as before.

\end{proof}

\section{ Appendix 1: the finite rank case}

In this appendix, we give a sketch of the proof of Theorem \ref{TheoDiffeoMN}, referring to \cite{GG2} for details. The mapping $\chi_N$  is of course well defined and smooth on $\mathcal V(2N)_{\rm gen}$. The explicit formula of $u$ in terms of $\chi_N(u)$ is obtained as before thanks to the compressed shift operator and it  proves that $\chi_N$ is one to one.
\subsection{A local diffeomorphism}
To prove that $\chi_N$ is a local diffeomorphism, we establish some identities on the Poisson Brackets. This set of identities imply that the differential of $\chi_N$ is of maximal rank so that $\chi_N$ is a local diffeomorphism. As a consequence, it is an open mapping.

 Let us first recall some basic definitions on Hamiltonian formalism.
Given a smooth real-valued function $F$ on a finite dimensional  symplectic manifold $(\mathcal M,\omega )$, the Hamiltonian vector field of $F$
is the vector field $X_F$ on $\mathcal M$ defined by
$$\forall m\in \mathcal M, \forall h\in T_m\mathcal M, dF(m).h=\omega (h, X_F(m))\ .$$
Given two smooth real valued functions $F,G$, the Poisson bracket of $F$ and $G$ is
$$\{ F,G\} =dG.X_F=\omega (X_F,X_G)\ .$$
The above identity is generalized to complex valued functions $F, G$ by $\C $-bilinearity.

To obtain that the image of the symplectoc form $\omega$ by $\chi_N$ is  given by Formula (\ref{ChiStarOmegaN}), one has to prove equivalently that $$(\chi_N)_*\omega=\sum_j\rho_jd\rho_j\wedge d\varphi_j+\sigma_jd\sigma_j\wedge d\theta_j$$
which includes the following identities.
\begin{proposition}\label{InvolActionAngle}
For any $j,k\in\{1,\dots,N\}$ , one has
\begin{eqnarray*}
\{\rho_j,\rho_k\}=\{\rho_j,\sigma_k\}=\{\sigma_j,\sigma_k\}=0\\
\{\rho_j,\varphi_k\}=\rho_j^{-1}\delta_{jk} ,\;  \{\sigma_j,\varphi_k\}=0\ ,\\
\{\rho_j,\theta_k\}=0,\; \{\sigma_j,\theta_k\}=\sigma_j^{-1}\delta_{jk}\ .
\end{eqnarray*}
\end{proposition}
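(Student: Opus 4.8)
The plan is to compute all the relevant Poisson brackets directly from the Hamiltonian structure of $\mathcal V(2N)$, exploiting the fact that the functionals $\rho_j^2$ and $\sigma_j^2$ are (up to normalization) the spectral data of $H_u^2$ and $K_u^2$, together with the specific form of the Hamiltonian vector fields generated by trace-type functionals. First I would introduce, for fixed real $x$ with $|x|$ small, the generating functionals $J(x)(H_u^2)$ and the analogous $\mathcal J(x)(K_u^2):=((I-xK_u^2)^{-1}(u)\mid u)$, whose logarithmic derivatives encode $\sum_j \rho_j^2/(1-x\rho_j^2)$ and $\sum_j\sigma_j^2/(1-x\sigma_j^2)$ respectively by Proposition \ref{trace} and Lemma \ref{Nu_j}. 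The key computational input, taken from \cite{GG2}, is the Lax-pair / evolution structure: the Hamiltonian flows generated by these generating functionals act on $u$ in an explicit way (conjugation of $H_u$ by a one-parameter family of unitaries plus a scalar factor), from which one reads off that the spectral quantities $\rho_j$, $\sigma_j$ are invariant along these flows. This yields the first line $\{\rho_j,\rho_k\}=\{\rho_j,\sigma_k\}=\{\sigma_j,\sigma_k\}=0$: each $\rho_k^2$ (resp.\ $\sigma_k^2$) is a spectral invariant, hence Poisson-commutes with every functional built from the spectrum of $H_u^2$ (resp.\ $K_u^2$); the mixed bracket $\{\rho_j,\sigma_k\}=0$ requires the commutation of the two families of flows, which follows because the generating functional of $K_u^2$ is, by \eqref{K_u^2}, a functional of $H_u^2$ and the rank-one perturbation $(\cdot\mid u)u$, and one checks this is preserved by the $H_u^2$-flows.

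Next I would compute the brackets involving the angles. The natural approach is to identify the Hamiltonian vector field $X_{\rho_j}$ (equivalently $X_{\rho_j^2}$, up to the factor $2\rho_j$) and evaluate $d\varphi_k$, $d\theta_k$ on it. Concretely, $\rho_j^2$ is the Hamiltonian whose flow is (a reparametrization of) the spectral flow isolating the $j$-th eigenspace: along it $H_u$ evolves by $\dot H_u = [B_j, H_u]$ for a suitable skew-adjoint $B_j$ built from the spectral projector $P_j$ onto $\C e_j$, together with a phase rotation coming from the antilinearity. Plugging the eigenvector $e_k$ and the vector $1$ into this evolution, one computes $\frac{d}{dt}(1\mid e_k)$ along the $\rho_j$-flow; the normalization in the Hamiltonian formalism (the factor $\rho_j^{-1}$ in $\{\rho_j,\varphi_k\}=\rho_j^{-1}\delta_{jk}$) comes precisely from $\nu_j^2=|(1\mid e_j)|^2$ appearing in the residue computation in Lemma \ref{Nu_j}, combined with $d(\rho_j^2)=2\rho_j\,d\rho_j$. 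The Kronecker delta reflects that the $\rho_j$-flow acts trivially on the $k$-th eigenspace for $k\ne j$. The same analysis for $K_u$, using Lemma \ref{spectralS} and the orthogonal basis $(g_j)$, gives $\{\sigma_j,\theta_k\}=\sigma_j^{-1}\delta_{jk}$. The vanishing of $\{\sigma_j,\varphi_k\}$ and $\{\rho_j,\theta_k\}$ again uses that the $\sigma_j$-flow preserves the $H_u^2$-eigenstructure and vice versa, so it fixes each $e_k$ up to phase and, crucially, fixes $(1\mid e_k)$ up to a phase that can be checked to be trivial (and symmetrically for $\theta$).

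The main obstacle, as in \cite{GG2}, is pinning down the precise Hamiltonian vector fields $X_{\rho_j}$ and $X_{\sigma_j}$ — i.e.\ establishing the Lax-type evolution equations for $H_u$ and $K_u$ under these flows, and in particular getting the scalar/phase terms exactly right so that the normalizing factors $\rho_j^{-1}$, $\sigma_j^{-1}$ emerge cleanly rather than some other function of the spectrum. This is essentially a bookkeeping of residues: one writes the generating functional as a sum of simple fractions with residues proportional to $\nu_j^2$ (resp.\ to the norming constants for $g_j$), differentiates in the spectral parameter, and matches. Once the vector fields are identified, evaluating $d\varphi_k$ and $d\theta_k$ on them is a direct computation using the spectral-projector formulas already appearing in Section 3 (the contour integral $P_{j}=\int_{\mathcal C_j}(zI-H_u^2)^{-1}\,dz/2i\pi$). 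I would carry out the $\rho$-$\varphi$ case in full and then indicate that the $\sigma$-$\theta$ case is identical after replacing $H_u$ by $K_u$, $e_j$ by $g_j/\|g_j\|$, and $1$ by $u$, invoking \eqref{K_u^2} and Lemma \ref{spectralS}.
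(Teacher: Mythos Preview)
Your proposal has the right ingredients---Lax-pair structure, spectral invariance, residue extraction---but it organizes them differently from the paper, and in a way that creates the very obstacle you flag as the hard part.

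The paper never computes the individual Hamiltonian vector fields $X_{\rho_j}$ or $X_{\sigma_j}$. Instead it works exclusively with the \emph{single} generating functional $J(x)=((I-xH_u^2)^{-1}1\mid 1)$. The Szeg\"o-hierarchy theorem (Theorem~\ref{Jdex}) gives a Lax pair for \emph{both} $H_u$ and $K_u$ under the $J(x)$-flow simultaneously; this immediately yields $\{J(x),\rho_k\}=\{J(x),\sigma_k\}=0$, and since by Lemma~\ref{Nu_j} the map $(\rho_1,\dots,\rho_N,\sigma_1,\dots,\sigma_N)\mapsto J(\cdot)$ is a local diffeomorphism onto its image, one reads off all action--action brackets at once (Remark~\ref{actioncom}). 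For the angle brackets, the paper computes $\{J(x),\varphi_k\}$ and $\{J(x),\theta_k\}$ directly (Lemma~\ref{BracketsJxvarphi})---this is tractable precisely because the $J(x)$-flow is explicit---and then extracts $\{\rho_j,\varphi_k\}$, $\{\sigma_j,\varphi_k\}$, etc., by expanding $J(x)$ via the product formula and matching polar parts in $x$.

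Your route instead tries to isolate $X_{\rho_j}$ first and then evaluate $d\varphi_k$ on it. The step ``along it $H_u$ evolves by $\dot H_u=[B_j,H_u]$ for a suitable skew-adjoint $B_j$'' is not justified: a Lax pair for $J(x)$ does \emph{not} automatically yield a Lax pair for each $\rho_j$ separately, because the residue of $J(x)$ at $x=\rho_j^{-2}$ is $-\nu_j^2$, a nontrivial function of \emph{all} the $\rho$'s and $\sigma$'s, not a function of $\rho_j$ alone. So the ``main obstacle'' you identify---pinning down $X_{\rho_j}$ with the correct phase/scalar terms---is real, and the paper's point is that one should not attempt it. Your residue bookkeeping is the right idea, but it should be applied to the \emph{brackets} $\{J(x),\varphi_k\}$ after they are computed, not as a device to manufacture the individual vector fields beforehand. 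Similarly, there is no need for a second generating functional $\mathcal J(x)$ for $K_u^2$: since the $J(x)$-flow already has a Lax pair for $K_u$, and $J(x)$ already encodes the $\sigma_j$'s as its zeros, one functional does all the work.
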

In order to compute for instance $\{\sigma_j,\theta_k\}$ one has for instance to differentiate $\theta_k$ along the direction of $X_{\sigma_j}$. As  the expression of $X_{\sigma _j}$ is fairly complicated, we  use the "Szeg\" o hierarchy"  studied in \cite{GG}. More precisely, we use the generating function $J(x)=((I-xH_u^2)^{-1}(1)\vert 1)=1+\sum_{n=1}^\infty x^nJ_{2n}$.  In the sequel, we shall restrict ourselves to real values of $x$, so that $J(x)$ is a real valued function.\s
 We proved in \cite{GG} that the Hamiltonian flow associated to $J(x)$ as a function of $u$ admits a Lax pair involving the Hankel operator $H_u$. From this Lax pair, one can deduce easily a second one involving the operator $K_u$.
 
 \begin{theo} [The Szeg\"o hierarchy, \cite{GG}, Theorem 8.1 and Corollary 8]\label{Jdex}
Let $s>\frac 12$. The map $u\mapsto J(x)$ is smooth on $H^s_+$. Moreover, the equation $
\partial _tu=X_{J(x)}(u)$
implies $
\partial _tH_u=[B_{u}^x,H_u],$ or to $\partial _tK_u=[C_{u}^x,K_u],$ where $B_{u}^x$ and $C_{u}^x$ are skew-adjoint if $x$ is real. 
\end{theo}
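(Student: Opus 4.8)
The plan is to follow the argument of \cite{GG} (Theorem 8.1 and Corollary 8); I indicate the main steps. \emph{Smoothness.} Since $s>\frac12$, the linear map $u\mapsto H_u$ sends $H^s_+$ boundedly into the Hilbert--Schmidt operators (this already holds on $H^{1/2}_+$), so $u\mapsto H_u^2$ is a smooth map with values in the trace class. For $|x|$ small, $(I-xH_u^2)^{-1}$ is given by a norm-convergent Neumann series and hence depends smoothly on $u$; for the remaining real values of $x$ one extends by analytic continuation, using the meromorphic expression of Lemma~\ref{Nu_j}. Pairing with the fixed vector $1$ then yields a smooth function of $u$, which is real valued for $x\in\R$ because $H_u^2$ is selfadjoint.

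Next I would compute the Hamiltonian vector field of $J(x)$. Differentiating $J(x)=((I-xH_u^2)^{-1}1\mid 1)$ at $u$ in a real direction $h$, and writing $\delta\!\left[(I-xH_u^2)^{-1}\right]=x\,(I-xH_u^2)^{-1}\,\delta(H_u^2)\,(I-xH_u^2)^{-1}$ together with $\delta(H_u^2)=H_hH_u+H_uH_h$, one uses the selfadjointness of $H_u$ and of the resolvent to rearrange $dJ(x)(u)\cdot h$ into the shape ${\rm Im}(h\mid w_x(u))$ for an explicit vector $w_x(u)\in L^2_+$. By definition of the symplectic form $\omega={\rm Im}(\cdot\mid\cdot)$, this $w_x(u)$ is $X_{J(x)}(u)$. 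Its explicit form is a combination of a Toeplitz action $T_{b_x}u$, with $b_x$ a real symbol built from $(I-xH_u^2)^{-1}1$, and of terms of the type $(\,\cdot\,)\,(I-xH_u^2)^{-1}1$; the careful bookkeeping of these contributions is the bulk of the computation.

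Because $u\mapsto H_u$ is linear, the equation $\partial_tu=X_{J(x)}(u)$ gives $\partial_tH_u=H_{w_x(u)}$. Plugging in the explicit $w_x(u)$ and using the algebraic composition rules relating Hankel and Toeplitz operators --- in particular $H_uT_z=T_z^*H_u$, the identity $T_zT_z^*=I-(\,\cdot\mid 1)1$, and the decomposition of $H_{\Pi(bu)}$ for a suitable symbol $b$ --- one checks that $H_{w_x(u)}=[B_u^x,H_u]$, where $B_u^x$ is an explicit operator of the form ``$\frac{i}{2}$ times a rational expression in $H_u^2$ and the resolvent $(I-xH_u^2)^{-1}$, minus $i\,T_{b_x}$''; for $x\in\R$ this is manifestly skew-adjoint (consistently with the fact that the flow preserves the spectrum of the selfadjoint operator $H_u^2$). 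The second Lax pair follows by compression: using $K_u=T_z^*H_u$ and $K_u^2=H_u^2-(\,\cdot\mid u)u$, one differentiates $K_u$, substitutes the $H_u$-equation, and shows $\partial_tK_u=[C_u^x,K_u]$, where $C_u^x$ is obtained from $B_u^x$ by replacing $H_u^2$ with $K_u^2$ (equivalently, $C_u^x$ is the compression of $B_u^x$ to the closure of the range of $K_u$); it is skew-adjoint for the same reason.

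The delicate point is the explicit identification of $X_{J(x)}$ together with the verification that $\partial_tH_u$ factors as a commutator with a skew-adjoint operator: this is where the rigidity of Hankel operators is genuinely used, through the interaction of the resolvent $(I-xH_u^2)^{-1}$ with the Hankel/Toeplitz algebra and with the antilinearity of $H_u$. Once the $H_u$-Lax pair is established, the passage to the $K_u$-Lax pair is the short compression argument above, built on $K_u=T_z^*H_u$ and $K_u^2=H_u^2-(\,\cdot\mid u)u$.
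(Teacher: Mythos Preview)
The paper does not give its own proof of this theorem: it is stated as a citation of \cite{GG}, Theorem~8.1 and Corollary~8, with the only additional comment being that ``from this Lax pair, one can deduce easily a second one involving the operator $K_u$.'' Your proposal explicitly follows the same reference and sketches the expected line of argument (smoothness via the Hilbert--Schmidt/trace-class structure, explicit computation of $X_{J(x)}$, the Hankel/Toeplitz algebra to rewrite $\partial_tH_u$ as a commutator, then passage to $K_u$), which is consistent with the paper's treatment; there is nothing further to compare.
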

\begin{remark}\label{actioncom}
As a direct consequence, the spectrum of $H_u$ as well as the spectrum of $K_u$ are conserved by the Hamiltonian flow of $J(x)$.  We infer that the Poisson brackets of $J(x)$ with $\rho _j$ or $\sigma _j$ are
zero, which implies, in view of Lemma \ref{Nu_j}, that the brackets of $\rho _k$ or $\sigma _\ell $ with $\rho _j$ or $\sigma _m$ are zero, hence it gives the first set of commutation properties stated in Proposition \ref{InvolActionAngle}.
\end{remark}
Using the Szeg\"o hierarchy, we can also compute the Poisson brackets of $J(x)$ with the angles. 
\begin{Lemma}\label{BracketsJxvarphi}
 $$\{J(x),\varphi_j\}=\frac 12 \frac{xJ(x)}{1-\rho _j^2x}\quad \{J(x),\theta_j\}=-\frac 12 \frac{xJ(x)}{1-\sigma _j^2x}.$$
\end{Lemma}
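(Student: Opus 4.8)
The strategy is to exploit the Lax pair for the Hamiltonian flow of $J(x)$ provided by Theorem \ref{Jdex}, exactly as was done in \cite{GG2}. Write $u(t)$ for the flow of $X_{J(x)}$ through $u$, so that $\partial_t H_{u(t)} = [B_u^x, H_{u(t)}]$ with $B_u^x$ skew-adjoint for real $x$. Then the unitary group $U(t)$ solving $\dot U = B_u^x U$, $U(0)=I$, conjugates the whole flow: $H_{u(t)} = U(t) H_u U(t)^*$. Since $\rho_j^2$ is a simple eigenvalue of $H_u^2$ preserved along the flow, the corresponding (projective) eigenvector evolves by $e_j(t) = \pm U(t) e_j$, and likewise $f_j(t) = \pm U(t) f_j$ using the second Lax pair $\partial_t K_{u(t)} = [C_u^x, K_{u(t)}]$, $\dot V = C_u^x V$. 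The point is that $\{J(x), \varphi_j\} = \frac{d}{dt}\big|_{t=0} \varphi_j(u(t)) = \frac{d}{dt}\big|_{t=0} \arg(1 \mid e_j(t))^2$, so I just need to differentiate $(1 \mid U(t) e_j)$ and similarly $(u(t) \mid V(t) f_j)$ at $t=0$.

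Carrying this out, $\frac{d}{dt}\big|_{t=0}(1 \mid U(t) e_j) = (1 \mid B_u^x e_j)$, hence $\frac{d}{dt}\big|_{t=0}\arg(1\mid e_j(t))^2 = 2\,\mathrm{Im}\frac{(1 \mid B_u^x e_j)}{(1 \mid e_j)}$ (modulo a correction from the normalization of $e_j(t)$, which is real and so contributes nothing to the imaginary part). So everything reduces to computing $B_u^x(1)$, or rather the relevant inner products of $B_u^x$. In \cite{GG} the operator $B_u^x$ is given explicitly in terms of the resolvent $(I - x H_u^2)^{-1}$ and the vector $1$; plugging in the spectral decomposition $1 = \sum_k \nu_k e^{-i\varphi_k/2} e_k \ (+\,\text{kernel part})$ and $H_u e_j = \rho_j e_j$, the matrix elements telescope and the factor $\frac{1}{1-\rho_j^2 x}$ emerges from the resolvent acting on the $e_j$-component, while the overall scalar $\frac12 x J(x)$ comes from the normalization $J(x) = ((I-xH_u^2)^{-1} 1 \mid 1)$ appearing in the denominator of $B_u^x$. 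The $\theta_j$ computation is the same with $K_u$, $C_u^x$, $f_j$ in place of $H_u$, $B_u^x$, $e_j$, and the sign flip together with $\frac{1}{1-\sigma_j^2 x}$ reflects that $C_u^x$ differs from $B_u^x$ by the conjugation intertwining $H_u$ and $K_u = T_z^* H_u$, i.e. by a term built from multiplication by $z$.

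The main obstacle is twofold. First, one must pin down the exact formula for $B_u^x$ (and $C_u^x$) from \cite{GG} and verify the normalization constants; a careless bookkeeping of the $\frac12$, the sign, and which resolvent appears will give the wrong answer, and the whole point of the lemma is that these constants come out in precisely the form needed to integrate the Poisson bracket and recover the symplectic form in Proposition \ref{InvolActionAngle}. Second, one must justify differentiating $\varphi_j$ along the flow, i.e. that the eigenvector $e_j(u(t))$ depends smoothly on $t$ near $t=0$ — this is exactly where the genericity hypothesis $u \in \mathcal{V}(2N)_{\mathrm{gen}}$ (simplicity of $\rho_j^2$) is used, via analytic perturbation theory for the isolated simple eigenvalue $\rho_j^2$ of $H_{u(t)}^2$, together with the already-established smoothness of $u \mapsto J(x)$ on $H^s_+$ from Theorem \ref{Jdex}. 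Once these two points are in hand the computation is a short algebraic manipulation of resolvent matrix elements.
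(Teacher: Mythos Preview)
Your proposal is correct and follows essentially the same approach that the paper indicates: the paper does not give a detailed proof of this lemma but states it as a consequence of the Szeg\"o hierarchy (Theorem~\ref{Jdex}), referring to \cite{GG2} for the computation, which is exactly the Lax-pair argument you outline---evolve the eigenvectors $e_j$, $f_j$ by the unitary groups generated by $B_u^x$, $C_u^x$, and differentiate the angles along the flow of $X_{J(x)}$. Your identification of the two delicate points (the explicit form of $B_u^x$, $C_u^x$ from \cite{GG} with its normalization constants, and the smooth dependence of eigenvectors under the simplicity hypothesis) is accurate.
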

Using again the expression of $J(x)$, these commutation properties allow to obtain by identification of the polar parts the last commutation properties of Proposition \ref{InvolActionAngle}.

 To conclude that the image of the symplectic form $\omega$ is given by Formula (\ref{ChiStarOmegaN}), we need to establish the following remaining  commutation properties,
 $$\{ \varphi _j,\varphi _k\} =\{ \varphi _j,\theta _k\} =\{ \theta _j,\theta _k\} =0\ .$$
 In  \cite{GG2}, these identities are obtained as consequences of further calculations. Here we give a simpler argument. By Lemma \ref{InvolActionAngle}, one can write
 $$(\chi_N)_*\omega=\sum_j\rho_jd\rho_j\wedge d\varphi_j+\sigma_jd\sigma_j\wedge d\theta_j+\tilde \omega ,$$
where $\tilde \omega $ is a closed form depending only on variables $\rho _j,\sigma _m$. Consider the following real submanifold of $\mathcal V(2N)_{{\rm gen}}$, 
$$\Lambda _N=\{ u\in \mathcal V(2N)_{{\rm gen}}\, :\, \varphi _1=\dots =\varphi _N=\theta _1=\dots =\theta _N=0\} .$$
By formula (\ref{c_nDiffeo}), every element $u$ of $\Lambda _N$ has real Fourier coefficients. Consequently, $\omega =0$ on $\Lambda _N$. On the other hand, 
$(\chi_N)_*\omega=\tilde \omega $ on $\chi _N(\Lambda _N)$, and the $\rho _j,\sigma _m$ are coordinates on $\Lambda _N$. We conclude that $\tilde \omega =0$.

\subsection{Surjectivity: a compactness result}

As $\Xi_N$ is connected, it suffices to prove that $\chi_N$ is proper. Let us take a  sequence $(\zeta^{(p)})_p$ in $\Xi _N$ which converges to $\zeta \in \Xi _N$, and such that, for every $p$, there exists  $u_p\in {\mathcal V}(2N)_{{\rm gen}}$ with
$$\chi _N(u_p)=\zeta^{(p)}\ .$$
Since 
$$\Vert u_p\Vert _{VMO}=\Vert H_{u_p}\Vert=\max _{1\le j\le N} (\rho _j^{(p)})= \max _{1\le j\le N}(|\zeta_{2j-1}^{(p)}|)$$
$(u_p)$  is a bounded sequence in $VMO_+(\T)$.  Up to extracting a subsequence, we may assume that $(u_p)_{p\in\Z_+}$  converges weakly to some $u$ in $VMO_+(\T)$. At this stage we can appeal to Proposition 
\ref{compactness} and conclude that the convergence of $u_p$ to $u$ is strong and that
$$\rho _j(u)=|\zeta_{2j-1}|,\; \sigma _j(u)=|\zeta_{2j}|,\   j=1,\dots ,N $$
with $ \rho _j(u)=0$, $\sigma_j(u)=0$ if $j>N$.
Therefore  $u\in \mathcal V(2N)_{{\rm gen}}\ .$ This completes the proof of the surjectivity of $\chi _N$.

\section{Appendix 2: The boundedness of operator $A$.}

In this appendix, we prove the boundedness of operator $A$ defined by (\ref{A}) in Theorem \ref{TheoHomeo}. Of course, this boundedness follows from the theorem itself,
since it implies that $A$ is conjugated to the compressed shift operator. However, we found interesting to give a self-contained proof of this fact. We need the following two lemmas.

\begin{Lemma}
Let  $(\rho_j)_{j\ge 1}$ and $(\sigma_j)_{j\ge 1}$ be two sequences such that $$\rho_1^2>\sigma_1^2>\rho_2^2>\dots >\dots \to 0.$$
Then, the following quantities are well defined  and coincide respectively outside $\{\frac 1{\rho_j^2}\}_{j\ge 1}$ and $\{\frac 1{\sigma_j^2}\}_{j\ge 1}$
\begin{equation}\label{J(x)}
\prod_{j=1}^\infty \frac{1-x\sigma_j^2}{1-x\rho_j^2}=1+x\sum_{j=1}^\infty \frac{\nu_j^2\rho_j^2}{1-x\rho_j^2}
\end{equation}
\begin{equation}\label{1/J(x)}
\prod_{j=1}^\infty \frac{1-x\rho_j^2}{1-x\sigma_j^2}=1-x\left (C+\sum_{j=1}^\infty \frac{\kappa_j^2}{1-x\sigma_j^2}\right )
\end{equation}
where $$C=\left\{\begin{array}{ll}0\text{ if }\sum_{j=1}^\infty \nu_j^2<1\text{ or }\sum_{j=1}^\infty \nu_j^2=1\text{ and }\sum_{j=1}^\infty \nu_j^2\rho_j^{-2}=\infty\\
\\
(\sum_{j=1}^\infty \nu_j^2\rho_j^{-2})^{-1}\text{ if }\sum_{j=1}^\infty \nu_j^2=1\text{ and }\sum_{j=1}^\infty \nu_j^2\rho_j^{-2}<\infty
\end{array}\right.$$
Here the $\nu_j^2$'s are given by formula (\ref{nulambdamu}) and the $\kappa_j^2$'s by formula (\ref{kappa}).
\end{Lemma}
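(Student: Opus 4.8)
The plan is to prove both identities first for real $x<0$, where every object in sight is a product or a series of \emph{positive} terms and all passages to the limit are monotone, and then to extend them to all admissible $x$ by the identity theorem for meromorphic functions (both sides of (\ref{J(x)}), resp.\ (\ref{1/J(x)}), being meromorphic on $\C$ with poles only at the $1/\rho_j^2$, resp.\ the $1/\sigma_j^2$). The convergence statements are routine: the interlacing hypothesis gives $0<\rho_j^2-\sigma_j^2\le\rho_j^2-\rho_{j+1}^2$, hence $\sum_j(\rho_j^2-\sigma_j^2)\le\rho_1^2<\infty$, from which $\prod_j\frac{1-x\sigma_j^2}{1-x\rho_j^2}$ and its reciprocal converge locally uniformly away from $\{1/\rho_j^2\}_j$, resp.\ $\{1/\sigma_j^2\}_j$; and each factor of the products defining $\nu_j^2$ and $\kappa_j^2$ in (\ref{nulambdamu}), (\ref{kappa}) tends to $1$ with summable defect, so those products converge.

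For (\ref{J(x)}) I would work with the truncations $J_N(x):=\prod_{j=1}^N\frac{1-x\sigma_j^2}{1-x\rho_j^2}$, which are rational, $O(1)$ at infinity with $J_N(0)=1$, $J_N(\infty)=A_N:=\prod_{j=1}^N\sigma_j^2/\rho_j^2$, and simple poles at $1/\rho_j^2$ ($j\le N$). Computing the residues gives the exact partial-fraction expansion
$$J_N(x)=A_N+\sum_{j=1}^N\frac{\nu_{j,N}^2}{1-x\rho_j^2},\qquad \nu_{j,N}^2=\Big(1-\tfrac{\sigma_j^2}{\rho_j^2}\Big)\prod_{\substack{1\le k\le N\\ k\ne j}}\frac{\rho_j^2-\sigma_k^2}{\rho_j^2-\rho_k^2},$$
and evaluating at $x=0$ yields $\sum_{j=1}^N\nu_{j,N}^2=1-A_N$. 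Since every extra factor in $\nu_{j,N}^2$ exceeds $1$, the numbers $\nu_{j,N}^2$ increase with $N$ to $\nu_j^2$, so $\sum_j\nu_j^2=1-A_\infty\le1$ with $A_\infty:=\prod_j\sigma_j^2/\rho_j^2$; in particular the right-hand side of (\ref{J(x)}) is well defined. Letting $N\to\infty$ for $x<0$ (monotone convergence of $J_N$, of $A_N$, and of the partial sums) gives $J(x)=A_\infty+\sum_j\frac{\nu_j^2}{1-x\rho_j^2}$, which is (\ref{J(x)}) after replacing $A_\infty$ by $1-\sum_j\nu_j^2$; the identity then holds for all $x\notin\{1/\rho_j^2\}_j$.

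For (\ref{1/J(x)}) I would avoid expanding $1/J_N$ directly (that hides the constant term behind ``mass escaping to infinity''); instead peel off the first factor: in the truncations $\prod_{j=1}^N\frac{1-x\rho_j^2}{1-x\sigma_j^2}=(1-x\rho_1^2)\,\frac{1}{1-x\sigma_N^2}\prod_{i=1}^{N-1}\frac{1-x\rho_{i+1}^2}{1-x\sigma_i^2}$, and letting $N\to\infty$ (the middle factor tends to $1$),
$$\frac1{J(x)}=(1-x\rho_1^2)\,\widetilde J(x),\qquad \widetilde J(x):=\prod_{j}\frac{1-x\widetilde\sigma_j^2}{1-x\widetilde\rho_j^2},\quad \widetilde\sigma_j^2:=\rho_{j+1}^2,\ \widetilde\rho_j^2:=\sigma_j^2,$$
where the shifted sequences again interlace, $\widetilde\rho_1^2>\widetilde\sigma_1^2>\widetilde\rho_2^2>\dots\to0$. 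Applying the already-proved (\ref{J(x)}) to $\widetilde J$ gives $\widetilde J(x)=\widetilde A_\infty+\sum_j\frac{\widetilde\nu_j^2}{1-x\sigma_j^2}$ with $\widetilde A_\infty=\prod_j\rho_{j+1}^2/\sigma_j^2=1-\sum_j\widetilde\nu_j^2$; multiplying by $1-x\rho_1^2=(1-x\sigma_j^2)+x(\sigma_j^2-\rho_1^2)$ term by term and collecting the constant part (which is $\widetilde A_\infty+\sum_j\widetilde\nu_j^2=1$), one gets
$$\frac1{J(x)}=1-x\Big(\rho_1^2\widetilde A_\infty+\sum_j\frac{\widetilde\nu_j^2(\rho_1^2-\sigma_j^2)}{1-x\sigma_j^2}\Big).$$
Two bookkeeping points remain. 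First, $\widetilde\nu_j^2(\rho_1^2-\sigma_j^2)$ is exactly $\kappa_j^2$ as in (\ref{kappa}): this is an identity between ratios of shifted infinite products of linear factors, cleanest to verify by matching residues at $1/\sigma_j^2$ of the corresponding truncated rational functions, or by a careful telescoping (the products do not converge separately, so the factors must be paired). Second, $C:=\rho_1^2\widetilde A_\infty$ has the stated value: here $\widetilde A_\infty=\lim_N\rho_{N+1}^2/(\rho_1^2A_N)$, and $A_N/\rho_{N+1}^2$ is increasing with $\sup_N A_N/\rho_{N+1}^2=\sum_j\nu_j^2\rho_j^{-2}\in(0,+\infty]$ — precisely the computation already used in the proof of Theorem \ref{KerHu} via $F_N(y)\uparrow F(y)$. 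Hence: if $\sum_j\nu_j^2<1$ (i.e.\ $A_\infty>0$) then $\rho_{N+1}^2/A_N\to0$ and $C=0$; if $\sum_j\nu_j^2=1$ and $\sum_j\nu_j^2\rho_j^{-2}=\infty$ then $A_N/\rho_{N+1}^2\to\infty$ and again $C=0$; and if $\sum_j\nu_j^2=1$ and $\sum_j\nu_j^2\rho_j^{-2}<\infty$ then $\widetilde A_\infty=(\rho_1^2\sum_j\nu_j^2\rho_j^{-2})^{-1}$, so $C=(\sum_j\nu_j^2\rho_j^{-2})^{-1}$ — matching the three cases in the statement. As before, the identity proved for $x<0$ extends to all $x\notin\{1/\sigma_j^2\}_j$.

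I expect the main obstacle to be exactly this last step for (\ref{1/J(x)}): pinning down the constant $C$ (where the asymptotics of $xJ(x)$ at infinity, equivalently of $\rho_{N+1}^2/A_N$, genuinely enter) and reconciling it with the trichotomy, together with checking that the peeling reduction reproduces the closed form (\ref{kappa}) \emph{on the nose} rather than a rescaling of it. By contrast, the convergence, the partial-fraction expansion of $J_N$, and the monotone passages to the limit are all soft.
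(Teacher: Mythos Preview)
Your proof is correct. For (\ref{J(x)}) you follow essentially the paper's route---truncate, expand in partial fractions, pass to the limit---though you exploit the monotonicity of $\nu_{j,N}^2$ in $N$ where the paper uses a uniform tail bound; both work.

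For (\ref{1/J(x)}) your argument is genuinely different. The paper truncates $1/J$ directly as $1-xH_N(x)$ with $H_N(x)=\sum_{j\le N}\frac{(\kappa_j^{(N)})^2}{1-x\sigma_j^2}$, shows $\sum_j(\kappa_j^{(N)})^2=\sum_{j\le N}(\rho_j^2-\sigma_j^2)\le\rho_1^2$, then proves convergence of $H_N'(x)$ first and integrates to recover $\lim_N H_N(x)$ up to an additive constant, which is finally identified as $C=-\lim_{x\to-\infty}\frac1{xJ(x)}$. Your peeling trick $1/J(x)=(1-x\rho_1^2)\widetilde J(x)$ with the shifted interlacing data $\widetilde\rho_j^2=\sigma_j^2$, $\widetilde\sigma_j^2=\rho_{j+1}^2$ reduces the second identity to the already-proved first one, which is cleaner and sidesteps the differentiate-then-integrate maneuver. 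The price is the bookkeeping you flag: the identity $\widetilde\nu_j^2(\rho_1^2-\sigma_j^2)=\kappa_j^2$ does hold (the ratio $\kappa_j^2/\widetilde\nu_j^2$ telescopes, after pairing factors, to $\rho_1^2-\sigma_j^2$), and $C=\rho_1^2\widetilde A_\infty=\lim_N\rho_{N+1}^2/A_N$ as you say. Both approaches end up appealing to the same asymptotic computation for $C$, namely the monotone behavior of $p_N=A_N/\rho_{N+1}^2$ and its identification with $\sum_j\nu_j^2\rho_j^{-2}$ via $F_N(y)\uparrow F(y)$, already carried out in the proof of Theorem~\ref{KerHu}.
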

\begin{remark} 
Notice that formulae (\ref{J(x)}) and (\ref{1/J(x)}) can be interpreted in light of Theorem \ref{TheoHomeo}, as we did in Lemma \ref{Nu_j}.
More precisely, formula (\ref{J(x)}) gives the value of $J(x)=((I-xH_u^2)^{-1}(1)\vert 1)=1+x((I-xH_u^2)^{-1}u\vert u)$, while formula 
(\ref{1/J(x)}) gives the value of $1/J(x)=1-x((I-xK_u^2)^{-1}u\vert u)$. This provides an interpretation of constant $C$, as the contribution of
$\ker (K_u)\cap \overline {{\rm Ran}H_u)}$ in the expansion.
\end{remark}

\begin{proof}
We first consider finite sequences $(\rho_j)_{1\le j\le N}$ and $(\sigma_j)_{1\le j\le N}$ such that $\rho_1^2>\sigma_1^2>\rho_2^2>\dots >\sigma_N^2> 0.$ 
We claim that, for $x\notin \{\frac 1{\rho_j^2}\}_{j\ge 1}$
\begin{equation}\label{J_N}\prod_{j=1}^N \frac{1-x\sigma_j^2}{1-x\rho_j^2}=1+x\sum_{j=1}^N \frac{(\nu_j^{(N)})^2\rho_j^2}{1-x\rho_j^2}
\end{equation}
where 
$$(\nu_j^{(N)})^2= \left(1-\frac{\sigma_j^2}{\rho_j^2}\right)\prod_{k\ne j\atop 1\le k\le N}\frac{\rho_j^2-\sigma_k^2}{\rho_j^2-\rho_k^2}.$$
Indeed, both functions have the same poles, the same residue hence their difference is a polynomial. Moreover, this polynomial function tends to a constant at infinity, hence  is a constant. As both terms coincide at $x=0$, they coincide everywhere. 
It remains to let $N\to \infty$. The left hand side in (\ref{J_N}) tends to 
$$\prod_{j=1}^\infty \frac{1-x\sigma_j^2}{1-x\rho_j^2}$$ since this product converges in view of the assumption on the sequences $(\rho_j)$ and $(\sigma_j)$.

Let us consider the limit of the right hand side in Equality (\ref{J_N}).
Let $x$ tend to $-\infty$ in Equality (\ref{J_N}). We get
$$\prod_{j=1}^N\frac{\sigma_j^2}{\rho_j^2}=1-\sum_{j=1}^N(\nu_j^{(N)})^2.$$
In particular, $\sum_{j=1}^N(\nu_j^{(N)})^2$ is bounded by $1$ so $\sum_{j=1}^\infty \nu_j^2$ converges by Fatou's lemma.
For $x\le 0$,
$$\sum_{N\ge j\ge M}\frac{\rho_j^2(\nu_j^{(N)})^2}{1-x\rho_j^2}\le  \rho _M^2,$$
hence the series $\sum_{N\ge j\ge 1}\frac{\rho_j^2(\nu_j^{(N)})^2}{1-x\rho_j^2}$ is uniformly summable, and we infer
$$\sum_{j=1}^N \frac{\rho_j^2(\nu_j^{(N)})^2}{1-x\rho_j^2}\rightarrow \sum_{j=1}^\infty \frac{\rho_j^2\nu_j^2}{1-x\rho_j^2}\ .$$
It gives the first equality (\ref{J(x)}) for $x\le 0$ and for $x\notin \{\frac 1{\rho_j^2}\}_{j\ge 1}$ by analytic continuation.

For Equality (\ref{1/J(x)}), we do almost the same analysis. As before, as $N$ tends to $\infty$, 
$$\prod_{j=1}^N\frac{1-x\rho_j^2}{1-x\sigma_j^2}\to \prod_{j=1}^\infty\frac{1-x\rho_j^2}{1-x\sigma_j^2},\; x\notin \{\frac 1{\sigma_j^2}\}_{j\ge 1}.$$ On the other hand, for $x\notin \{\frac 1{\sigma_j^2}\}_{j\ge 1}$,
$$\prod_{j=1}^N\frac{1-x\rho_j^2}{1-x\sigma_j^2}=1-x\sum_{j=1}^N\frac{(\kappa_j^{(N)})^2}{1-x\sigma_j^2}$$
where 
$$(\kappa_j^{(N)})^2=(\rho_j^2-\sigma_j^2)\prod_{k\ne j\atop 1\le k\le N}\frac{\sigma_j^2-\rho_k^2}{\sigma_j^2-\sigma_k^2}.$$
We denote by $H_N$ the function defined  for $x\neq \sigma_j^{-2}$, $1\le j\le N$, as
\begin{equation}\label{H_N}H_N(x):=\sum_{j=1}^N\frac{(\kappa_j^{(N)})^2}{1-x\sigma_j^2}.\end{equation}
The preceding equality reads
\begin{equation}\label{H_Nprod}H_N(x)=\frac 1x\left(1-\prod_{j=1}^N\frac{1-x\rho_j^2}{1-x\sigma_j^2}\right).\end{equation}
 Using Formula (\ref{H_Nprod}) and the logarithmic derivative  of  $$\prod_{j=1}^N\frac{1-x\rho_j^2}{1-x\sigma_j^2}$$ at $x=0$, we get that $H_N(0)=\sum_{j=1}^N(\rho_j^2-\sigma_j^2)$. As by Formula (\ref{H_N}), $H_N(0)=\sum_{j=1}^N (\kappa_j^{(N)})^2$, we obtain
 $$\sum_{j=1}^N (\kappa_j^{(N)})^2=\sum_{j=1}^N(\rho_j^2-\sigma_j^2)$$ and this last sum is bounded independently of $N$ namely
$$\sum_{j=1}^N(\rho_j^2-\sigma_j^2)\le \sum_{j=1}^N(\rho_j^2-\rho_{j+1}^2)\le  \rho_1^2.$$ 
 Hence the sum $\sum \kappa_j^2$ converges by Fatou's lemma. 
 We use this property to justify the convergence of $H_N'(x)$. Indeed,  for $x\neq \sigma_j^{-2}$, $1\le j\le N$, $$H_N'(x)=\sum_{j=1}^N\frac{(\kappa_j^{(N)})^2\sigma_j^2}{(1-x\sigma_j^2)^2}.$$
So, a proof analogous as the one used before allows to show that
$$H_N'(x)\to \sum_{j=1}^\infty\frac{\kappa_j^2\sigma_j^2}{(1-x\sigma_j^2)^2}.$$
Furthermore, the convergence holds uniformly for $x\le 0$.
Therefore, on  one hand, as $N$ tends to $\infty$,
$$H_N(x)=\frac 1x\left (1-\prod_{j=1}^N\frac{1-x\rho_j^2}{1-x\sigma_j^2}\right )\to \frac 1x\left (1-\prod_{j=1}^\infty\frac{1-x\rho_j^2}{1-x\sigma_j^2}\right )$$ and on the other hand, as
$$H_N(x)=\int_{y}^xH'_N(t) dt +H_N(y)$$
we get at the limit as $N$ goes to $\infty$, for $x\le 0$, and hence everywhere by analytic continuation,
$$ \frac 1x\left (1-\prod_{j=1}^\infty\frac{1-x\rho_j^2}{1-x\sigma_j^2}\right )=\sum_{j=1}^\infty\frac{\kappa_j^2}{1-x\sigma_j^2}+C.$$
It remains to compute $C$ by taking the limit as $x$ goes to $-\infty$.
$$C=-\lim_{x\to -\infty}\frac 1x\prod_{j=1}^\infty \frac{1-x\rho_j^2}{1-x\sigma_j^2}=-\lim_{x\to -\infty}\frac 1{xJ(x)}$$
where 
$$
J(x):=\prod_{j=1}^\infty \frac{1-x\sigma_j^2}{1-x\rho_j^2}\ .$$

This limit has been computed in (\ref{limF(y)}) whenever $$\sum_{j=1}^\infty \nu_j^2=1\text{ and }\sum_{j=1}^\infty \frac{\nu_j^2}{\rho_j^2}<\infty$$ and is equal to  $$(\sum_{j=1}^\infty \frac{\nu_j^2}{\rho_j^2})^{-1}.$$
This calculation  easily extends to the other cases writing for $x<0$
\begin{eqnarray*}
J(x)&=&1+x\sum_{j=1}^\infty \frac{\nu_j^2\rho_j^2}{1-x\rho_j^2}\\
&=&1-\sum_{j=1}^\infty \nu_j^2+\sum_{j=1}^\infty \frac{\nu_j^2}{1-x\rho_j^2}.
\end{eqnarray*}
\end{proof}

\begin{corollary}For any $m\ge 1$, we have
 \begin{equation}\label{1}\sum_j\frac{\rho_j^2\nu_j^2}{\rho_j^2-\sigma_m^2}=1\end{equation}
 \begin{equation}\label{1kappa}\sum_j\frac{\kappa_j^2}{\rho_m^2-\sigma_j^2}+\frac C{\rho_m^2}=1\end{equation}
 \begin{equation}\label{2}\sum_j\frac{\rho_j^2\nu_j^2}{(\rho_j^2-\sigma_m^2)(\rho_j^2-\sigma_p^2)}=\frac 1{\kappa_m^2}\delta_{mp}\end{equation}
  \begin{equation}\label{2kappa}\sum_j\frac{\sigma_j^2\kappa_j^2}{(\sigma_j^2-\rho_m^2)(\sigma_j^2-\rho_p^2)}=\frac 1{\nu_m^2}\delta_{mp}-1\end{equation}

\end{corollary}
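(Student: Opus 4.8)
The plan is to derive all four identities of the corollary from the two generating-function formulae (\ref{J(x)}) and (\ref{1/J(x)}) of the previous lemma, by evaluating these meromorphic functions, and their first derivatives, at well-chosen points. Recall that $J(x)=\prod_j\frac{1-x\sigma_j^2}{1-x\rho_j^2}$ has simple poles exactly at $x=1/\rho_j^2$ and simple zeros exactly at $x=1/\sigma_j^2$, and symmetrically for $1/J(x)$; by the interlacing $\rho_1^2>\sigma_1^2>\rho_2^2>\dots$, the point $1/\sigma_m^2$ is a regular point of $J$ and $1/\rho_m^2$ a regular point of $1/J$. The series on the right of (\ref{J(x)}) and (\ref{1/J(x)}) converge locally uniformly off their poles, and (using $\sum_j\nu_j^2\le 1$ and $\sum_j\kappa_j^2<\infty$, both established in the lemma) so do the termwise-differentiated series $\sum_j\frac{\nu_j^2\rho_j^2}{(1-x\rho_j^2)^2}$ and $\sum_j\frac{\kappa_j^2}{(1-x\sigma_j^2)^2}$; hence term-by-term differentiation is legitimate.

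First I would obtain (\ref{1}) and (\ref{1kappa}) by plain evaluation at a zero. Setting $x=1/\sigma_m^2$ in (\ref{J(x)}) kills the left-hand product, giving $1+\frac1{\sigma_m^2}\sum_j\frac{\nu_j^2\rho_j^2}{1-\rho_j^2/\sigma_m^2}=0$; clearing denominators, this is precisely $\sum_j\frac{\rho_j^2\nu_j^2}{\rho_j^2-\sigma_m^2}=1$. Setting $x=1/\rho_m^2$ in (\ref{1/J(x)}) gives in the same way $1-\frac1{\rho_m^2}\bigl(C+\sum_j\frac{\kappa_j^2}{1-\sigma_j^2/\rho_m^2}\bigr)=0$, i.e. (\ref{1kappa}).

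Next, the off-diagonal cases $m\ne p$ are purely algebraic consequences of (\ref{1}) and (\ref{1kappa}) via partial fractions. For (\ref{2}), decompose $\frac1{(\rho_j^2-\sigma_m^2)(\rho_j^2-\sigma_p^2)}=\frac1{\sigma_m^2-\sigma_p^2}\bigl(\frac1{\rho_j^2-\sigma_m^2}-\frac1{\rho_j^2-\sigma_p^2}\bigr)$, multiply by $\rho_j^2\nu_j^2$, sum, and apply (\ref{1}) to each of the two resulting sums to get $0$. For (\ref{2kappa}), noting $(\sigma_j^2-\rho_m^2)(\sigma_j^2-\rho_p^2)=(\rho_m^2-\sigma_j^2)(\rho_p^2-\sigma_j^2)$, decompose $\frac{\sigma_j^2}{(\rho_m^2-\sigma_j^2)(\rho_p^2-\sigma_j^2)}=\frac{\rho_m^2}{\rho_p^2-\rho_m^2}\cdot\frac1{\rho_m^2-\sigma_j^2}+\frac{\rho_p^2}{\rho_m^2-\rho_p^2}\cdot\frac1{\rho_p^2-\sigma_j^2}$, sum against $\kappa_j^2$, and substitute the values $1-C/\rho_m^2$ and $1-C/\rho_p^2$ from (\ref{1kappa}); the $C$-terms cancel and the remainder collapses to $-1$.

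The diagonal cases $m=p$ are the real content, and this is the step I expect to be the main obstacle, since it requires computing $J'(1/\sigma_m^2)$ and $(1/J)'(1/\rho_m^2)$ in two different ways and matching them. On one side, termwise differentiation of (\ref{J(x)}) gives $J'(x)=\sum_j\frac{\nu_j^2\rho_j^2}{(1-x\rho_j^2)^2}$, whence $J'(1/\sigma_m^2)=\sigma_m^4\sum_j\frac{\nu_j^2\rho_j^2}{(\rho_j^2-\sigma_m^2)^2}$. On the other side, isolating the vanishing factor in the product, $J(x)=(1-x\sigma_m^2)Q_m(x)$ with $Q_m(x)=\frac1{1-x\rho_m^2}\prod_{j\ne m}\frac{1-x\sigma_j^2}{1-x\rho_j^2}$, so $J'(1/\sigma_m^2)=-\sigma_m^2 Q_m(1/\sigma_m^2)$, and the explicit product (\ref{kappa}) for $\kappa_m^2$ identifies $Q_m(1/\sigma_m^2)=-\sigma_m^2/\kappa_m^2$; equating yields (\ref{2}) for $m=p$. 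Identity (\ref{2kappa}) for $m=p$ follows identically from (\ref{1/J(x)}): $(1/J)'(x)=-C-\sum_j\frac{\kappa_j^2}{(1-x\sigma_j^2)^2}$, while $(1/J)'(1/\rho_m^2)=-\rho_m^2 R_m(1/\rho_m^2)=-\rho_m^2/\nu_m^2$ by (\ref{nulambdamu}), with $R_m(x)=\frac1{1-x\sigma_m^2}\prod_{j\ne m}\frac{1-x\rho_j^2}{1-x\sigma_j^2}$; equating, then writing $\sigma_j^2=\rho_m^2-(\rho_m^2-\sigma_j^2)$ in the numerator and invoking (\ref{1kappa}) once more to dispose of the $C$-term, gives $\sum_j\frac{\sigma_j^2\kappa_j^2}{(\sigma_j^2-\rho_m^2)^2}=\frac1{\nu_m^2}-1$. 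Besides these two double computations, the only delicate points are the legitimacy of the term-by-term differentiation noted above and the consistent bookkeeping of the constant $C$ throughout.
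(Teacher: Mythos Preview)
Your proposal is correct and follows essentially the same route as the paper: evaluate (\ref{J(x)}) and (\ref{1/J(x)}) at $x=1/\sigma_m^2$ and $x=1/\rho_m^2$ for (\ref{1}) and (\ref{1kappa}), derive the off-diagonal cases of (\ref{2}) and (\ref{2kappa}) from these by partial fractions, and obtain the diagonal cases by differentiating and evaluating at the same points. The only cosmetic difference is that, for the diagonal case of (\ref{2}), the paper first substitutes $y=1/x$ and differentiates in $y$, whereas you differentiate directly in $x$; the two computations are equivalent, and your explicit handling of the constant $C$ in (\ref{2kappa}) spells out a step the paper leaves implicit.
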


\begin{proof}
The  first two equalities (\ref{1}) and (\ref{1kappa}) are obtained by making $x=\frac 1{\sigma_m^2}$ and $x=\frac 1{\rho_m^2}$ respectively  in formula (\ref{J(x)}) and formula (\ref{1/J(x)}).
For equality (\ref{2}) in the case $m= p$, we first make the change of variable $y=1/x$ in formula (\ref{J(x)}) then differentiate both sides with respect to $y$ and make $y=\sigma_m^2$. 
Equality (\ref{2kappa}) in the case $m=p$ follows by differentiating equation (\ref{1/J(x)}) and making $x=\frac 1{\rho_m^2}$. 
Both equalities in the case $m\neq p$ follow directly respectively from equality (\ref{1}) and equality (\ref{1kappa}).
\end{proof}
\begin{Lemma}
Let $m$ be a fixed positive integer. Let $(\varphi_j)$ and $(\theta_m)$ be two sequences of elements of $\T$.
Denote by $A^{(m)}$ the rank $1$ operator  of matrix $$A^{(m)}=\left(\frac{\nu_j}{\rho_j^2-\sigma_m^2}\frac{\nu_k\rho_ke^{-i\varphi_k}}{\rho_k^2-\sigma_m^2}\sigma_m\kappa_m^2e^{-i\theta_m}\right)_{jk}\ .$$
 Then $A:=\sum_{m\ge 1} A^{(m)}$ defines a bounded operator on $\ell^2$ with $AA^*\le I$.
\end{Lemma}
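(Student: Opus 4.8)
The plan is to factor $A$ as a product $A=BC$ of two contractions $C\colon\ell^2\to\ell^2$ and $B\colon\ell^2\to\ell^2$; the two contraction estimates will then follow directly from the biorthogonality relations $(\ref{2})$ and $(\ref{2kappa})$ of the corollary above. For each $m\ge1$ I would introduce the vectors of $\ell^2$
\[
u^{(m)}=\Bigl(\frac{\nu_j}{\rho_j^2-\sigma_m^2}\Bigr)_{j\ge1},\qquad
w^{(m)}=\Bigl(\frac{\nu_j\rho_j}{\rho_j^2-\sigma_m^2}\Bigr)_{j\ge1},\qquad
g^{(m)}=\Bigl(\frac{\kappa_m\,{\rm e}^{i\varphi_j}\,\nu_j\rho_j}{\rho_j^2-\sigma_m^2}\Bigr)_{j\ge1},
\]
which all belong to $\ell^2$ because $\sum_j\nu_j^2\le1$, $\rho_j\to0$, and no denominator vanishes thanks to the interlacing hypothesis on $(\rho_j)$ and $(\sigma_j)$. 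A direct rearrangement of formula $(\ref{A})$ shows that the matrix of $A$ is
\[
A_{jk}=\sum_{m\ge1}\sigma_m\kappa_m\,{\rm e}^{-i\theta_m}\,u^{(m)}_j\,\overline{g^{(m)}_k}\ ,
\]
the series converging absolutely for each fixed $j,k$ because $\sum_m\kappa_m^2<\infty$ (established in the preceding lemma) while the $\sigma_m$ and $(\rho_j^2-\sigma_m^2)^{-1}$ remain bounded. This dictates setting $(Cx)_m:=(x\vert g^{(m)})$ and $By:=\sum_m\sigma_m\kappa_m\,{\rm e}^{-i\theta_m}\,y_m\,u^{(m)}$, so that $A=BC$ and $AA^*=B\,(CC^*)\,B^*$.

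First I would check that $C$ is a contraction. Relation $(\ref{2})$ says exactly that $(w^{(m)}\vert w^{(p)})=\kappa_m^{-2}\,\delta_{mp}$, so $(\kappa_m w^{(m)})_{m\ge1}$ is an orthonormal system of $\ell^2$; since multiplying the $j$-th coordinate by the unimodular number ${\rm e}^{i\varphi_j}$ (the same for every $m$) preserves orthonormality, $(g^{(m)})_{m\ge1}$ is orthonormal as well. Bessel's inequality then gives $\Vert Cx\Vert^2=\sum_m\vert(x\vert g^{(m)})\vert^2\le\Vert x\Vert^2$, hence $\Vert C\Vert\le1$; and as $C^*y=\sum_m y_m g^{(m)}$ is an isometry, $CC^*=I$.

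The estimate for $B$ is the computational core. It suffices to bound $(By\vert x)$ for finitely supported $x,y$, where all sums are finite. Cauchy--Schwarz in the variable $m$ gives
\[
\vert(By\vert x)\vert^2\le\Vert y\Vert^2\sum_m\sigma_m^2\kappa_m^2\,\vert(u^{(m)}\vert x)\vert^2\ ,
\]
and, expanding $\vert(u^{(m)}\vert x)\vert^2=\sum_{j,k}u^{(m)}_j u^{(m)}_k\,\overline{x_j}\,x_k$ and exchanging the sum over $m$ with the finite sum over $(j,k)$ (legitimate since $\sum_m\sigma_m^2\kappa_m^2\,\vert\rho_j^2-\sigma_m^2\vert^{-1}\vert\rho_k^2-\sigma_m^2\vert^{-1}<\infty$ for fixed $j,k$), relation $(\ref{2kappa})$ yields
\[
\sum_m\sigma_m^2\kappa_m^2\,\vert(u^{(m)}\vert x)\vert^2=\sum_{j,k}\overline{x_j}\,x_k\,\nu_j\nu_k\Bigl(\frac{\delta_{jk}}{\nu_j^2}-1\Bigr)=\Vert x\Vert^2-\vert(\nu\vert x)\vert^2\ ,
\]
where $\nu=(\nu_j)_{j\ge1}\in\ell^2$. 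Hence $\vert(By\vert x)\vert\le\Vert y\Vert\,\Vert x\Vert$ on a dense set, so $B$ extends to a bounded operator with $\Vert B\Vert\le1$; and the same identity applied to $\Vert B^*x\Vert^2=\sum_m\sigma_m^2\kappa_m^2\vert(u^{(m)}\vert x)\vert^2$ shows $BB^*=I-(\,\cdot\,\vert\nu)\nu$.

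Combining, $A=BC$ is bounded and $AA^*=B\,(CC^*)\,B^*=BB^*=I-(\,\cdot\,\vert\nu)\nu\le I$, which is the assertion — in fact in the sharp form $AA^*=I-(\,\cdot\,\vert\nu)\nu$, consistent with the identification of $A$ with the compressed shift of Section 3, since $\Vert P_u1\Vert^2=\sum_j\nu_j^2$. If the mode of convergence of $\sum_m A^{(m)}$ is also wanted, I would note that the truncations $A_M:=\sum_{m\le M}A^{(m)}$ equal $B_MC$, that $\Vert B_M\Vert\le1$ by the same Cauchy--Schwarz bound restricted to $m\le M$, and that $B_M\to B$ in the strong operator topology since $B-B_M$ annihilates $e_1,\dots,e_M$ and has norm $\le1$; thus $A_M\to A$ strongly. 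I expect the only genuinely delicate point to be the bookkeeping needed to make sense of the infinite sums before boundedness is known — handled by first restricting to finitely supported vectors and invoking the absolute convergence above — together with keeping the several complex conjugations straight through the Cauchy--Schwarz step.
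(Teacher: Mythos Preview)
Your argument is correct and is essentially the same computation as the paper's, only organized through an explicit factorization $A=BC$: your identity $CC^*=I$ is precisely the paper's computation $A^{(m)}(A^{(p)})^*=\delta_{mp}\cdot(\ldots)$ via relation~(\ref{2}), and your identity $BB^*=I-(\,\cdot\,\vert\nu)\nu$ is the paper's final summation via relation~(\ref{2kappa}), both landing on $AA^*=I-(\,\cdot\,\vert\nu)\nu$. The only cosmetic difference is that the paper first checks $\Vert A^{(m)}\Vert\le1$ separately, which your factorization makes unnecessary.
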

\begin{proof}
First we notice that  $A^{(m)}$ satisfies $\Vert A^{(m)}\Vert\le 1$. This  follows from Cauchy-Schwarz inequality, formula (\ref{2}) and from the  estimate
\begin{eqnarray*}
\sum_j\frac{\nu_j^2}{(\rho_j^2-\sigma_m^2)^2}&=&-\frac 1{\sigma_m^2}\sum_j\frac{\nu_j^2}{\rho_j^2-\sigma_m^2}+\frac 1{\sigma_m^2}\sum_j\frac{\rho_j^2\nu_j^2}{(\rho_j^2-\sigma_m^2)^2}\\
&=&\frac 1{\sigma_m^2}\left(\frac{\sum_j\nu_j^2-1}{\sigma_m^2}+\frac 1{\kappa_m^2}\right)\\
&\le&\frac 1{\sigma_m^2\kappa_m^2},
\end{eqnarray*}
so that
\begin{eqnarray*}
\Vert A^{(m)}\Vert^2&\le& \sum_j\frac{\nu_j^2}{(\rho_j^2-\sigma_m^2)^2}\sum_k\frac{\nu_k^2\rho_k^2}{(\rho_k^2-\sigma_m^2)^2}\sigma_m^2\kappa_m^4\\
&\le &1.
\end{eqnarray*}
Let us consider the well defined operator $A^{(m)}(A^{(p)})^*$. An elementary calculation gives
\begin{eqnarray*}(A^{(m)}(A^{(p)})^*)_{jk}&=&\sum_{\ell} A^{(m)}_{j\ell}\overline{A_{k\ell}^{(p)}}\\
&=&\frac{\nu_j\nu_k}{(\rho_j^2-\sigma_m^2)(\rho_k^2-\sigma_m^2)}\sigma_m^2\kappa_m^2\delta_{mp}
\end{eqnarray*}
Taking the sum of both sides over $m$ and $p$, we get  by (\ref{2kappa}) that the sum converges and equals $\delta_{jk}-\nu_j\nu_k$. Consequently,  the sum of $A^{(m)}(A^{(p)})^*$ defines a bounded positive operator majorated by $I$  and coincides with the operator $AA^*$. It gives the boundedness of $A$ and completes the proof.

\end{proof}

\end{document}